\def\struckint{\mathop{%
\def\mathpalette##1##2{\mathchoice{##1\displaystyle##2}%
 {##1\textstyle##2}{##1\scriptstyle##2}{##1\scriptscriptstyle##2}}%
\mathpalette
{\vbox\bgroup\baselineskip0pt\lineskiplimit-1000pt\lineskip-1000pt
\halign\bgroup\hfill$}
{##$\hfill\cr{\intop}\cr\diagup\cr\egroup\egroup}%
}\limits}
\def\id{\mathop{\mathbf{Id}_{\mathbb{N}}}\nolimits}
\def\idn{\mathop{\mathbf{Id}_{[n]}}\nolimits}
\def\idtwo{\mathop{\mathbf{Id}_{[2]}}\nolimits}
\def\wireN{\mathop{\mathcal{W}_{\mathbb{N}}}\nolimits}
\def\wirem{\mathop{\mathcal{W}_{[m]}}\nolimits}
\def\wiren{\mathop{\mathcal{W}_{[n]}}\nolimits}
\def\sup{\mathop{\rm sup}\nolimits}
\def\symmetricN{\mathop{\mathfrak{S}_{\mathbb{N}}}\nolimits}
\def\Gbf{\mathop{\mathbf{\Gamma}_{}}\nolimits}
\def\stoch{\mathop{\mathcal{S}_2}}
\def\graphsn{\mathop{\mathcal{G}_{[n]}}\nolimits}
\def\graphsm{\mathop{\mathcal{G}_{[m]}}\nolimits}
\def\graphsN{\mathop{\mathcal{G}_{\mathbb{N}}}\nolimits}
\def\limitdensities{\mathop{\mathcal{D}^*}\nolimits}
\def\limitwiredensities{\mathop{\mathcal{V}^*}\nolimits}
\def\ind{\mathop{\text{ind}}\nolimits}
\newcommand{\xnorm}[1]{ \Vert #1 \Vert }
\def\part{\mathop{\mbox{Part}}\nolimits}
\def\ell{\mathop{[l]}\nolimits}
\def\equalinlaw{\mathop{=_{\mathcal{L}}}\nolimits}
\def\Nb{\mathop{\mathbb{N}_{}}\nolimits}
\newtheorem{thm}{Theorem}[section]
\newtheorem{lemma}[thm]{Lemma}
\newtheorem{prop}[thm]{Proposition}
\newtheorem{cor}[thm]{Corollary}
\newtheorem{defn}[thm]{Definition}
\newtheorem{example}[thm]{Example}
\newtheorem{rmk}[thm]{Remark}%\endlocaldefs
\title{Exchangeable Markov processes on graphs: Feller case}
\author{Harry Crane}\address{Rutgers University\\ Department
of Statistics \\ 
110 Frelinghuysen Road\\
Piscataway, NJ 08854\\
hcrane@stat.rutgers.edu.}
\subjclass{ 05C80; 60G09; 60J05; 60J25}
\date{\today}
\thanks{The author is partially supported by NSF grant DMS-1308899 and NSA grant H98230-13-1-0299.}
\keywords{exchangeability; random graph; graph limit; L\'evy--It\^o decomposition; Aldous--Hoover theorem;  Erd\H{o}s--R\'enyi random graph}
\begin{document}
\maketitle
\begin{abstract}
The transition law of every exchangeable Feller process on the space of countable graphs is determined by a $\sigma$-finite measure on the space of $\{0,1\}\times\{0,1\}$-valued arrays. 
In discrete-time, this characterization amounts to a construction from an independent, identically distributed sequence of exchangeable random functions.
In continuous-time, the behavior is enriched by a L\'evy--It\^o-type decomposition of the jump measure into mutually singular components that govern global, vertex-level, and edge-level dynamics.
Furthermore, every such process almost surely projects to a Feller process in the space of graph limits.  
\end{abstract}

\section{Introduction}\label{section:introduction}

A {\em graph}, or {\em network}, $G=(V,E_G)$ is a set of {\em vertices} $V$ and a binary relation of {\em edges} $E_G\subseteq V\times V$.
For $i,j\in V$, we write $G^{ij}$ to indicate the status of edge $(i,j)$, i.e.,
\[G^{ij}:=\mathbf{1}\{(i,j)\in E_G\}:=\left\{\begin{array}{cc}
1,& (i,j)\in E_G,\\
0,& \text{otherwise.}
\end{array}\right.\]
%Both the pair $G=(V,E_G)$ and the array $G=(G^{ij})_{i,j\in V}$ represent the same object.
We write $\mathcal{G}_V$ to denote the space of graphs with vertex set $V$.

Networks represent interactions among individuals, particles, and variables throughout science.
In this setting, consider a population of individuals labeled distinctly in $V$ and related to one another by the edges of a graph $G=(V,E_G)$.
The population size is typically large, but unknown, and so we assume a countably infinite population and take $V=\Nb:=\{1,2,\ldots\}$.
In practice, the labels $\Nb$ are arbitrarily assigned for the purpose of distinguishing individuals, and data can only be observed for a finite sample $S\subset\Nb$.
Thus, we often take $S=[n]:=\{1,\ldots,n\}$  whenever $S\subseteq\Nb$ is finite with cardinality $n\geq1$.

These practical matters relate to the operations of 
\begin{itemize}
	\item {relabeling}: the {\em relabeling} of $G=(G^{ij})_{i,j\geq1}$ by any permutation $\sigma:\Nb\rightarrow\Nb$ is
\begin{equation}\label{eq:relabeling}
G^{\sigma}:=(G^{\sigma(i)\sigma(j)})_{i,j\geq1},\quad\text{and}
\end{equation}
	\item {restriction}: the {\em restriction} of $G=(G^{ij})_{i,j\geq1}$ to a graph with vertices $S\subseteq\Nb$ is 
\begin{equation}\label{eq:restriction}
	G^S=G|_{S}:=(G^{ij})_{i,j\in S}.
\end{equation}
\end{itemize}
In many relevant applications, the network structure changes over time, resulting in a time-indexed collection $(G_t)_{t\in T}$ of graphs.
We consider exchangeable, consistent Markov processes as statistical models in this general setting.

\subsection{Graph-valued Markov processes}

A {\em $\graphsN$-valued Markov process} is a collection $\Gbf=\{\Gbf_G:\,G\in\graphsN\}$ for which each $\Gbf_G=(\Gamma_t)_{t\in T}$ is a family of random graphs satisfying $\Gamma_0=G$ and 
\begin{itemize}
	\item the {\em Markov property}, i.e., the past $(\Gamma_s)_{s<t}$ and future $(\Gamma_s)_{s>t}$ are conditionally independent given the present $\Gamma_t$, for all $t\in T$.
\end{itemize}
In addition, we assume $\Gbf$ is
\begin{itemize}
	\item {\em exchangeable}, i.e., all $\Gbf_G$ share a common {\em exchangeable} transition law such that
\begin{equation}\label{eq:exch tps}
	\mathbb{P}\{\Gamma_{t'}\in\cdot\mid\Gamma_t=F\}=\mathbb{P}\{\Gamma_{t'}^{\sigma}\in\cdot\mid\Gamma_t^{\sigma}=F\},\quad F\in\graphsN,\quad t'>t,\quad\text{ and }
\end{equation}
	\item {\em consistent}, i.e., the transition law of $\Gbf$ satisfies
\begin{equation}\label{eq:consistent}
	\mathbb{P}\{\Gamma_{t'}^{[n]}=F\mid\Gamma_t=F'\}=\mathbb{P}\{\Gamma_{t'}^{[n]}=F\mid\Gamma_t=F''\},\quad F\in\graphsn,\end{equation}
for all $F',F''\in\graphsN$ such that $F'|_{[n]}=F''|_{[n]}$, for every $n\in\Nb$.
\end{itemize}
Consistency implies that $\Gbf_G^{[n]}=(\Gamma_t|_{[n]})_{t\in T}$ satisfies the Markov property for every $n\in\Nb$, for all $G\in\graphsN$.
We call any $\Gbf$ satisfying these properties an {\em exchangeable, consistent Markov process}.
In Proposition \ref{prop:Feller equiv}, we observe that consistency and the Feller property are equivalent for exchangeable Markov processes on $\graphsN$, so we sometimes also call $\Gbf$ an {\em exchangeable Feller process}.

The process $\Gbf=\{\Gbf_G:\,G\in\graphsN\}$ is enough to determine the law of any collection $(\Gamma_t)_{t\in T}$ with a given transition law and initial distribution $\nu$ by first drawing $\Gamma_0\sim\nu$ and then putting $\Gbf_{\nu}=\Gbf_G$ on the event $\Gamma_0=G$.
With the underlying process $\Gbf=\{\Gbf_G:\,G\in\graphsN\}$ understood, we write $\Gbf_{\nu}=(\Gamma_t)_{t\in T}$ to denote a collection with this description.

Our main theorems characterize the behavior of exchangeable Feller processes in both discrete- and continuous-time.  
In discrete-time, we show that every exchangeable Feller process can be constructed by an iterated application of independent, identically distributed exchangeable random functions $\graphsN\rightarrow\graphsN$, called {\em rewiring maps}; see Section \ref{section:wiring maps}.
In continuous-time, $\Gbf$ admits a construction from a Poisson point process whose intensity measure has a L\'evy--It\^o-type decomposition.
The L\'evy--It\^o representation classifies every discontinuity of $\Gbf$ as one of three types.
In addition, when $\nu$ is an exchangeable initial distribution on $\graphsN$, both discrete- and continuous-time processes $\Gbf_{\nu}$ project to a Feller process in the space of graph limits.
These outcomes invoke connections to previous work on the theory combinatorial stochastic processes \cite{Bertoin2006,Pitman2005}, partially exchangeable arrays \cite{Aldous1981a,AldousExchangeability,Hoover1979}, and limits of dense graph sequences \cite{LovaszBook,LovaszSzegedy2006}.

%For every $n\in\Nb$ and $G\in\graphsN$, let $\Gbf_G^{[n]}=(\Gamma_t^{[n]})_{t\in T}$ be the restriction of $\Gbf_G$ to $\graphsn$, with $\Gamma_0^{[n]}=G|_{[n]}$.
%Consistency of $\Gbf$ implies that $\Gbf_G^{[n]}\equalinlaw\Gbf_{G'}^{[n]}$ for every $n\in\Nb$ and $G,G'\in\graphsN$ such that $G|_{[n]}=G'|_{[n]}$, where $\equalinlaw$ denotes %{\em equality in law}.
%Thus, $\Gbf=\{\Gbf_G:\,G\in\graphsN\}$ determines an exchangeable Markov process $\Gbf^{[n]}:=\{\Gbf_{G_n}:\,G_n\in\graphsn\}$ on $\graphsn$ for every $n\in\Nb$.

\subsection{Outline}

Before summarizing our conclusions (Section \ref{section:summary}), we first introduce key definitions and assumptions (Section \ref{section:preliminaries1}).
We then unveil essential concepts in more detail (Section \ref{section:preliminaries}) and prove our main theorems in discrete-time (Section \ref{section:discrete}) and in continuous-time (Section \ref{section:continuous}).  
We highlight some immediate extensions of our main theorems in our concluding remarks (Section \ref{section:concluding remarks}).

\section{Definitions and assumptions}\label{section:preliminaries1}

\subsection{Graphs}\label{section:graphs}
For any graph $G=(V,E_G)$, we impose the additional axioms of 
\begin{itemize}
	\item[(i)] {\em anti-reflexivity}, $(i,i)\notin E_G$ for all $i\in V$, and
	\item[(ii)] {\em symmetry}, $(i,j)\in E_G$ implies $(j,i)\in E_G$ for all $i,j\in V$.
\end{itemize}
Thus, we specialize $\mathcal{G}_V$ to denote the set of all graphs satisfying (i) and (ii).

By the symmetry axiom (ii), we write $ij=\{i,j\}\in E_G$ to indicate that there is an edge between $i$ and $j$ in $G$.
 By definition, $G$ has no multiple edges, condition (i) forbids edges from a vertex to itself, and condition (ii) makes $G$ {\em undirected}.   
In terms of the {\em adjacency array} $G=(G^{ij})_{i,j\in V}$, (i) and (ii) above correspond to
\begin{itemize}
	\item[(i')] $G^{ii}=0$ for all $i\in V$ and
	\item[(ii')] $G^{ij}=G^{ji}$ for all $i,j\in V$,
\end{itemize}
respectively.
As we discuss in Section \ref{section:concluding remarks}, our main theorems remain valid under some relaxation of each of these conditions.
The above two notions of a graph---as a pair $(V,E_G)$ and as an adjacency array---are equivalent; we use them interchangeably and with the same notation.   

With $\mathfrak{S}_V$ denoting the space of permutations of $V\subseteq\Nb$, i.e., bijective functions $\sigma:V\rightarrow V$, relabeling \eqref{eq:relabeling} associates every $\sigma\in\mathfrak{S}_V$ to a map $\mathcal{G}_V\rightarrow\mathcal{G}_V$.
For all $S\subseteq S'\subseteq V$, restriction \eqref{eq:restriction} determines a map $\mathcal{G}_{S'}\rightarrow\mathcal{G}_S$, $G\mapsto G^S=G|_S$.
Specifically, for $n\geq m\geq1$, $G|_{[m]}=(G^{ij})_{1\leq i,j\leq m}$ is the leading $m\times m$ submatrix of $G=(G^{ij})_{1\leq i,j\leq n}$.
By combining \eqref{eq:relabeling} and \eqref{eq:restriction}, every  injection $\phi:S\rightarrow S'$ determines a projection $\mathcal{G}_{S'}\rightarrow\mathcal{G}_{S}$ by
\begin{equation}\label{eq:phi-image}
G\mapsto G^{\phi}:=(G^{\phi(i)\phi(j)})_{i,j\in S}.
\end{equation}

We call a sequence of finite graphs $(G_n)_{n\in\mathbb{N}}$ {\em compatible} if $G_n\in\graphsn$ and  $G_n|_{[m]}=G_m$ for every $m\leq n$, for all $n\in\Nb$. 
Any compatible sequence of finite graphs determines a unique {\em countable graph} $G_{\infty}$, the projective limit of $(G_1,G_2,\ldots)$ under restriction.  
We endow $\graphsN$ with the product-discrete topology induced, for example, by the ultrametric
\begin{equation}\label{eq:metric}
d(G,G'):=1/\max\{n\in\mathbb{N}:G|_{[n]}=G'|_{[n]}\},\quad G,G'\in\graphsN.\end{equation}
From \eqref{eq:metric}, we naturally equip $\graphsN$ with the Borel $\sigma$-field $\sigma\langle\,\cdot|_{[n]}\rangle_{n\in\Nb}$ generated by the restriction maps.
Under \eqref{eq:metric}, $\graphsN$ is a compact and, therefore, complete and separable metric space; hence, $\graphsN$ is Polish and standard measure-theoretic outcomes apply in our analysis.

\subsection{Graph limits}\label{section:graph limits}

For $n\geq m\geq1$, $F\in\mathcal{G}_{[m]}$, and $G\in\graphsn$, we define the {\em density of $F$ in $G$} by
\begin{equation}\label{eq:induced density}
\delta(F,G):={\ind(F,G)}/{n^{\downarrow m}},\end{equation}
where $\ind(F,G)$ is the number of embeddings of $F$ into $G$ and $n^{\downarrow m}:=n(n-1)\cdots(n-m+1)$.  Specifically, 
\[\ind(F,G):=\sum_{\phi:[m]\rightarrow[n]}\mathbf{1}\{G^{\phi}=F\}\]
is the number of injections $\phi:[m]\rightarrow[n]$ for which $G^{\phi}=F$. 

Given a sequence $G=(G_n)_{n\in\Nb}$ in $\mathcal{G}^*:=\bigcup_{n\in\mathbb{N}}\graphsn$, we define the {\em limiting density of $F$ in $G$} by
\[
\delta(F,G):=\lim_{n\rightarrow\infty}\delta(F,G_n),\quad\text{if it exists}.\]  In particular, for $G\in\graphsN$, we define the {\em limiting density of $F$ in $G$} by
\begin{equation}\label{eq:limiting density F}
\delta(F,G):=\lim_{n\rightarrow\infty}\delta(F,G|_{[n]}),\quad\text{if it exists}.\end{equation}

\begin{defn}[Graph limit]\label{defn:graph limit}
The {\em graph limit} of $G\in\graphsN$ is the collection 
\[|G|:=(\delta(F,G))_{F\in\bigcup_{m\in\Nb}\graphsm},\]
 provided $\delta(F,G)$ exists for every $F\in\bigcup_{m\in\Nb}\graphsm$.
If $\delta(F,G)$ does not exist for some $F$, then we put $|G|:=\partial$.
We write $\limitdensities$ to denote the closure of $\{|G|:\,G\in\graphsN\}\setminus\{\partial\}$ in $[0,1]^{\bigcup_{m\in\Nb}\graphsm}$.
\end{defn}

If the graph limit of $G$ exists, then $G$ determines a family of exchangeable probability distributions on $(\graphsn)_{n\in\Nb}$ by
\begin{equation}\label{eq:graph limit dist}\mathbb{P}\{\Gamma_n=F\mid G\}=\delta(F,G),\quad F\in\graphsn,\quad n\in\Nb.\end{equation}
Moreover, the distributions determined by $(\delta(F,G))_{F\in\mathcal{G}_{[m]}}$ and $(\delta(F,G))_{F\in\graphsn}$, $m\leq n$, are mutually {\em consistent}, i.e., the distribution in \eqref{eq:graph limit dist} satisfies
\[\mathbb{P}\{\Gamma_n|_{[m]}=F'\mid G\}=\sum_{F\in\graphsn:\,F|_{[m]}=F'}\delta(F,G)=\delta(F',G),\quad\text{for every } F'\in\mathcal{G}_{[m]}.\]
Thus, every $D\in\limitdensities$ determines a unique probability measure on $\graphsN$, which we denote by $\gamma_D$.
In addition, $\gamma_D$ is {\em exchangeable} in the sense that $\Gamma\sim\gamma_D$ satisfies $\Gamma^{\sigma}\equalinlaw\Gamma$ for all $\sigma\in\symmetricN$, where $\equalinlaw$ denotes {\em equality in law}.

\begin{rmk}
In our notation, $D\in\limitdensities$ and $\gamma_D$ correspond to the same object, but we use the former to emphasize that $D$ is the graph limit of some $G\in\graphsN$ and the latter to specifically refer to the probability distribution that $D$ determines on $\graphsN$.
As an element of $\limitdensities$, $D=(D_F)_{F\in\mathcal{G}^{*}}$ is an element of $[0,1]^{\bigcup_{m\in\Nb}\graphsm}$.
The connection between $D$ and $\gamma_D$ is made explicit by
\[D_F=D(F)=\gamma_D(\{G\in\graphsN:\,G|_{[n]}=F\}),\quad F\in\graphsn,\quad n\in\Nb.\]
\end{rmk}

\begin{defn}[Dissociated random graphs]\label{defn:dissociated graph}
A random graph $\Gamma$ is {\em dissociated} if
\begin{equation}\label{eq:dissociated graph}
\Gamma|_{S}\quad\text{and}\quad\Gamma|_{S'}\quad\text{ are independent for all disjoint subsets }S,S'\subseteq\Nb.
\end{equation}
We call a probability measure $\nu$ on $\graphsN$ {\em dissociated} if $\Gamma\sim\nu$ is a dissociated random graph.
\end{defn}
Dissociated measures play a central role in the theory of exchangeable random graphs:
the measure $\gamma_D$ determined by any $D\in\limitdensities$ is dissociated, and conversely the Aldous--Hoover theorem (Theorem \ref{thm:A-H}) states that every exchangeable probability measure on $\graphsN$ is a mixture of exchangeable, dissociated probability measures.
In particular, to every exchangeable probability measure $\nu$ on $\graphsN$ there exists a unique probability measure $\Delta$ on $\limitdensities$ such that $\nu=\gamma_{\Delta}$, where
\begin{equation}\label{eq:Delta-mixture}
\gamma_{\Delta}(\cdot):=\int_{\limitdensities}\gamma_D(\cdot)\Delta(dD)\end{equation}
is the mixture of $\gamma_D$ measures with respect to $\Delta$.

\subsection{Notation}

We  use the capital Roman letter $G$ to denote a generic graph, capital Greek letter $\Gamma$ to denote a random graph, bold Greek letter with subscript $\mathbf{\Gamma}_{\bullet}$ to denote a collection of random graphs with initial condition $\bullet$, and bold Greek letter $\Gbf=\{\Gbf_{\bullet}\}$ to denote a graph-valued process indexed by the initial conditions $\bullet$.

Superscripts index edges and subscripts index time; therefore, for $(\Gamma_t)_{t\in T}$, we write $\Gamma_t^{ij}$ to indicate the status of edge $ij$ at time $t\in T$ and $\Gamma^{S}_{T'}=(\Gamma_t^{S})_{t\in T'}$ to denote the trajectory of the edges $ij\subseteq S\subseteq\Nb$ over the set of times in $T'\subseteq T$.
We adopt the same notation for processes $\mathbf{\Gamma}=\{\Gbf_G:\,G\in\graphsN\}$, with $\mathbf{\Gamma}_{T'}^{S}:=\{\mathbf{\Gamma}_{G,T'}^S:\,G\in\graphsN\}$ denoting the restriction of $\Gbf$ to edges $ij\in S$ and times $T'\subseteq T$.

We distinguish between discrete-time ($T=\mathbb{Z}_+:=\{0,1,2,\ldots\}$) and continuous-time ($T=\mathbb{R}_+:=[0,\infty)$) processes by indexing time by $m$ and $t$, respectively; thus, $(\Gamma_m)_{m\geq0}$ denotes a discrete-time process and $(\Gamma_t)_{t\geq0}$ denotes a continuous-time process.
To further distinguish these cases, we call $\Gbf$ a {\em Markov chain} if its constituents are indexed by discrete-time and a {\em Markov process} if its constituents are indexed by continuous-time.
Whenever a discussion encompasses both discrete- and continuous-time, we employ terminology and notation for the continuous-time case.

\section{Summary of main theorems}\label{section:summary}

We now state our main theorems, saving many technical details for later.  
Of primary importance are the notions of {rewiring maps} and {rewiring limits}, which we introduce briefly in Section \ref{section:discrete-intro} and develop formally in Section \ref{section:wiring maps}.

\subsection{Discrete-time Feller chains}\label{section:discrete-intro}
Let $W=(W^{ij})_{i,j\in V}$ be a symmetric $\{0,1\}\times\{0,1\}$-valued array with $W^{ii}=(0,0)$ for all $i\in V$.
For $i,j\in V$, we express the $(i,j)$ component of $W$ as a pair $W^{ij}=(W^{ij}_0,W^{ij}_1)$.  
For any such $W$ and any $G\in\mathcal{G}_V$, we define the {\em rewiring of $G$ by $W$} by $G'=W(G)$, where
\begin{equation}\label{eq:wire G}
G'^{ij}:=\left\{\begin{array}{cc}
W_0^{ij},& G^{ij}=0\\
W_1^{ij},& G^{ij}=1\end{array}\right.,\quad i,j\in V.\end{equation}
Note that $W$ acts on $G$ by reconfiguring its adjacency array at each entry: if $G^{ij}=0$, then $G'^{ij}$ is taken from $W_0^{ij}$; otherwise, $G'^{ij}$ is taken from $W_1^{ij}$.  
Thus, we can regard $W$ as a function $\mathcal{G}_V\rightarrow\mathcal{G}_V$, called a {\em rewiring map}.
Lemma \ref{lemma:existence limit density} records some basic facts about rewiring maps.

Our definition of relabeling and restriction for rewiring maps is identical to definitions \eqref{eq:relabeling} and \eqref{eq:restriction} for graphs, i.e., for every $\sigma\in\mathfrak{S}_V$ and $S\subseteq V$, we define
\begin{align*}
	W^{\sigma}&:=(W^{\sigma(i)\sigma(j)})_{i,j\in V}\quad\text{and}\\
	W^S=W|_{S}&:=(W^{ij})_{i,j\in S}.
\end{align*}
We write $\mathcal{W}_V$ to denote the collection of rewiring maps $\mathcal{G}_V\rightarrow\mathcal{G}_V$.  

Given a probability measure $\omega$ on $\wireN$, we construct a discrete-time Markov chain $\mathbf{\Gamma}^*_{\omega}=\{\Gbf^*_{G,\omega}:\,G\in\graphsN\}$ as follows. 
We first generate $W_1,W_2,\ldots$ independent and identically distributed (i.i.d.)~according to $\omega$.
For every $G\in\graphsN$, we define $\Gbf^*_{G,\omega}=(\Gamma^*_m)_{m\geq0}$ by putting $\Gamma^*_0=G$ and
\begin{equation}\label{eq:iterated construction}
\Gamma^*_m:=W_m(\Gamma^*_{m-1})=(W_m\circ\cdots\circ W_1)(G),\quad m\geq1,
\end{equation}
where $W\circ W'$ indicates the composition of $W$ and $W'$ as maps $\graphsN\rightarrow\graphsN$.

From the definition in \eqref{eq:wire G}, $\mathbf{\Gamma}_{\omega}^*$ in \eqref{eq:iterated construction} is a consistent Markov chain.
If, in addition, $W\sim\omega$ satisfies $(W^{ij})_{i,j\geq1}\equalinlaw(W^{\sigma(i)\sigma(j)})_{i,j\geq1}$ for all permutations $\sigma:\Nb\rightarrow\Nb$, then $\Gbf_{\omega}^*$ is also exchangeable.
Theorem \ref{thm:discrete char}  establishes the converse: to every discrete-time exchangeable, consistent Markov chain $\Gbf=\{\Gbf_G:\,G\in\graphsN\}$ there corresponds a probability measure $\omega$ on $\wireN$ such that $\Gbf\equalinlaw\Gbf^*_{\omega}$, that is, $\Gbf_G\equalinlaw\Gbf^*_{G,\omega}$ for all $G\in\graphsN$.

By appealing to the theory of partially exchangeable arrays, we further characterize $\omega$ uniquely in terms of a mixing measure $\Upsilon$ on the space of {\em rewiring limits}, which we express as $\omega=\Omega_{\Upsilon}$.
We present these outcomes more explicitly in Section \ref{section:wiring maps}.

\begin{thm}[Discrete-time characterization]\label{thm:discrete char}
Let $\Gbf=\{\Gbf_G:\,G\in\graphsN\}$ be a discrete-time, exchangeable, consistent Markov chain on $\graphsN$.
Then there exists a probability measure $\Upsilon$ such that $\mathbf{\Gamma}\equalinlaw\mathbf{\Gamma}^*_{\Upsilon}$, where $\mathbf{\Gamma}^*_{\Upsilon}$ is constructed as in \eqref{eq:iterated construction} from $W_1,W_2,\ldots$ independent and identically distributed according to $\omega=\Omega_{\Upsilon}$. 
\end{thm}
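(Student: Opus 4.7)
The plan is to reduce the statement to matching one-step transitions, construct the rewiring measure $\omega$ via Aldous--Hoover applied to a two-step array, and then extract the mixing measure $\Upsilon$ by a second application of Aldous--Hoover. Let $P(G,\cdot)$ denote the one-step transition kernel of $\Gbf$. By the Markov property and the iid structure of $W_1, W_2, \ldots$, it suffices to construct an exchangeable probability measure $\omega$ on $\wireN$ such that $W(G) \sim P(G,\cdot)$ for every $G \in \graphsN$ when $W \sim \omega$; the iterated chain $\Gbf^*_\omega$ from \eqref{eq:iterated construction} then shares its transition kernel with $\Gbf$, whence $\Gbf \equalinlaw \Gbf^*_\omega$.

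To build $\omega$, run one step from $\Gamma_0 \sim \mathrm{ER}(1/2)$, i.e.\ $\Gamma_1 \mid \Gamma_0 \sim P(\Gamma_0,\cdot)$. Exchangeability of $\mathrm{ER}(1/2)$ together with the equivariance of $P$ forces the $\{0,1\}^2$-valued edge array $((\Gamma_0^{ij},\Gamma_1^{ij}))_{i<j}$ to be jointly exchangeable, so Aldous--Hoover produces iid uniform random variables $\alpha,(\xi_i)_{i\geq 1},(\xi_{ij})_{i<j}$ and a measurable $g=(g_0,g_1)$, symmetric in its middle two arguments, with
\[ (\Gamma_0^{ij},\Gamma_1^{ij}) \equalinlaw g(\alpha,\xi_i,\xi_j,\xi_{ij}). \]
Because $\Gamma_0$ has iid $\mathrm{Bern}(1/2)$ edge marginals, a second-moment computation against disjoint edges forces the conditional edge probability $\mathbb{P}\{g_0(\alpha,\xi_i,\xi_j,\cdot)=1 \mid \alpha,\xi_i,\xi_j\}$ to equal $1/2$ almost surely. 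A measurable measure-preserving reparametrization of $\xi_{ij}$ on each level set of $(\alpha,\xi_i,\xi_j)$ then lets us assume $g_0(\alpha,u,v,w)=\mathbf{1}\{w>1/2\}$, so that $\Gamma_0^{ij}=\mathbf{1}\{\xi_{ij}>1/2\}$ is independent of $(\alpha,(\xi_i))$.

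Introduce fresh independent uniform variables $\eta^{(0)}_{ij}$ on $[0,1/2]$ and $\eta^{(1)}_{ij}$ on $[1/2,1]$ (symmetric in $i,j$ and iid across $i<j$) and define
\[ W^{ij}_k := g_1(\alpha,\xi_i,\xi_j,\eta^{(k)}_{ij}), \qquad k \in \{0,1\},\ i<j. \]
Let $\omega$ be the law of $W$. Exchangeability of $\omega$ follows from the iid structure of $\alpha,(\xi_i),(\eta^{(k)}_{ij})$ together with the symmetry of $g_1$. For any $G\in\graphsN$, the array $W(G)^{ij}=g_1(\alpha,\xi_i,\xi_j,\eta^{(G^{ij})}_{ij})$ shares its joint law with $\Gamma_1 \mid \{\Gamma_0=G\}$: conditioning on $\Gamma_0=G$ in the original construction leaves $\alpha$ and $(\xi_i)$ unchanged and makes each $\xi_{ij}$ conditionally uniform on the half-interval $g_0^{-1}(G^{ij})$ independently across edges, which is precisely the distribution of the $\eta^{(G^{ij})}_{ij}$. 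Consistency of $P$ enters here, ensuring that the two laws, equal for $G|_{[n]}$ in the full support of $\mathrm{ER}(1/2)$, agree as kernels on $\graphsN$. A second application of Aldous--Hoover to the exchangeable $\{0,1\}^2$-valued array $W$ then identifies $\omega$ as a mixture $\Omega_\Upsilon$ of dissociated rewiring measures indexed by the space of rewiring limits developed in Section~\ref{section:wiring maps}.

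I expect the main obstacle to be the reparametrization step --- verifying that the conditional Bernoulli parameter is deterministically $1/2$ and producing a jointly measurable rearrangement that respects the symmetry of the Aldous--Hoover representation in the vertex variables $(\xi_i,\xi_j)$. The remaining pieces (iterating to build $\Gbf^*_\omega$, matching it with $\Gbf$ via the Markov property, and identifying $\omega$ with $\Omega_\Upsilon$) proceed through standard exchangeability arguments.
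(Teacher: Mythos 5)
Your proposal follows essentially the same route as the paper: couple a one-step transition with an Erd\H{o}s--R\'enyi$(1/2)$ initial state, apply Aldous--Hoover to the jointly exchangeable $\{0,1\}^2$-valued edge array, straighten the first coordinate so that $\Gamma_0^{ij}$ depends only on the edge-level uniform, re-randomize to read off a rewiring map $W$ with law $\omega$, extend from $\varepsilon_{1/2}$-a.e.\ $G$ to all $G$ by consistency/full support, and then decompose the exchangeable $\omega$ as $\Omega_\Upsilon$ (the paper's Theorem \ref{thm:discrete-rep}, Theorem \ref{thm:rewiring char}, and Proposition \ref{prop:mixing measure discrete}). The only substantive difference is that you carry out the ``straightening'' step by an explicit measure-preserving reparametrization, where the paper outsources it to Kallenberg's transfer theorem (Theorem 7.28 of \cite{KallenbergSymmetries}) and a Coding Lemma. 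One point needs repair: a second-moment computation against \emph{disjoint} edges only yields $\mathbb{E}[\,p(\alpha,\xi_i,\xi_j)\mid\alpha\,]=1/2$ a.s., not $p(\alpha,\xi_i,\xi_j)=1/2$ a.s.; to kill the dependence on $(\xi_i,\xi_j)$ you need a four-cycle count, e.g.\ the identity $t(C_4,p)=1/16$ together with the spectral inequality $t(C_4,p)\geq t(K_2,p)^4$ with equality only for a.e.\ constant kernels (conditionally on $\alpha$). The conclusion you want is true and standard, so this is a fixable under-justification rather than a fatal flaw; the remainder of the argument, including the observation that using independent fresh uniforms for the two components of $W^{ij}$ changes $\omega$ but not the induced transition kernel, is sound.
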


Our proof of Theorem \ref{thm:discrete char} relies on an extension (Theorem \ref{thm:discrete-rep}) of the Aldous--Hoover theorem (Theorem \ref{thm:A-H}) to the case of exchangeable and consistent Markov chains on $\{0,1\}$-valued arrays.
Heuristically, $W_1,W_2,\ldots$  must be i.i.d.\ because of the Markov and consistency assumptions:
by consistency, every finite restriction $\mathbf{\Gamma}^{[n]}_G$ has the Markov property and, therefore, for every $m\geq1$ the conditional law of $W_m$, given $\Gamma_{m-1}$, must satisfy 
\[W_m^{[n]}\text{ and }\Gamma_{m-1}^{\Nb\setminus[n]}\text{ are independent for every }n\in\Nb;\]
exchangeability further requires that $W_m^{[n]}$ is independent of $\Gamma_{m-1}^{[n]}$, and so we expect  $W_m$ to be independent of $\Gamma_{m-1}$ for every $m\geq1$;
by time-homogeneity, $W_1,W_2,\ldots$ should also be identically distributed;
independence of $W_1,W_2,\ldots$ then follows from the Markov property.  
Each of these above points on its own requires a nontrivial argument.
We make this heuristic rigorous in Section \ref{section:discrete}.

\subsubsection{Projection into the space of graph limits}\label{section:discrete graph limit}
%Another byproduct of de Finetti's theorem is the almost sure existence of the random variable
%\[|Z|:=\lim_{n\rightarrow\infty}n^{-1}\sum_{j=1}^{n}\mathbf{1}\{Z^j=1\},\]
%the limiting proportion of ones in an exchangeable $\{0,1\}$-valued sequence $Z=(Z^1,Z^2,\ldots)$.
%Existence of $|Z|$ is guaranteed by combining the law of large numbers with de Finetti's representation of $Z$ as a mixture of i.i.d.\ sequences.
%Since the unit interval is in correspondence with the collection of i.i.d.\ probability measures on $\{0,1\}$-valued sequences, $|Z|$ determines a random probability measure on i.i.d.\ %sequences.
From our discussion of graph limits in Section \ref{section:graph limits}, any exchangeable random graph $\Gamma$ projects almost surely to a {\em graph limit} $|\Gamma|=D$, which corresponds to a random element $\gamma_{D}$ in the space of exchangeable, dissociated probability measures on $\graphsN$.
Likewise, any exchangeable rewiring map $W\in\wireN$  projects to a {\em rewiring limit} $|W|=\upsilon$, which corresponds to an exchangeable, dissociated  probability measure $\Omega_{\upsilon}$ on $\wireN$.
We write $\limitdensities$ and $\limitwiredensities$ to denote the spaces of graph limits and rewiring limits, respectively.

We delay more formal details about  rewiring limits until Section \ref{section:wiring maps}.  
For now, we settle for an intuitive understanding by analog to graph limits.
Recall the definition of $\gamma_{\Delta}$ in \eqref{eq:Delta-mixture} for a probability measure $\Delta$ on $\limitdensities$, i.e., $\Gamma\sim\gamma_{\Delta}$ is a random graph obtained by first sampling $D\sim\Delta$ and, given $D$, drawing $\Gamma$ from $\gamma_D$.
Similarly, for a probability measure $\Upsilon$ on $\limitwiredensities$, $W\sim\Omega_{\Upsilon}$ is a random rewiring map obtained by first sampling $\upsilon\sim\Upsilon$ and, given $\upsilon$, drawing $W$ from $\Omega_\upsilon$.
In particular, $W\sim\Omega_{\Upsilon}$ implies $|W|\sim\Upsilon$.
Just as for graph limits, we regard $\upsilon\in\limitwiredensities$ and $\Omega_{\upsilon}$ as the same object, but use the former to refer to a rewiring limit of some $W\in\wireN$ and the latter to refer to the corresponding exchangeable, dissociated probability distribution on $\wireN$.

From the construction of $\Gbf^*_{\omega}$ in \eqref{eq:PPP construction}, every probability measure $\omega$ on $\wireN$ determines a transition probability $P_{\omega}(\cdot,\cdot)$ on $\graphsN$ by
\begin{equation}\label{eq:induced tps}
P_{\omega}(G,\cdot):=\omega(\{W\in\wireN:W(G)\in\cdot\}),\quad G\in\graphsN,\end{equation}
and thus every $\upsilon\in\limitwiredensities$ determines a transition probability $P_{\upsilon}(\cdot,\cdot)$ by taking $\omega=\Omega_{\upsilon}$ in \eqref{eq:induced tps}.
Consequently, every $\upsilon\in\limitwiredensities$ acts on $\limitdensities$ by composition of probability measures, 
\begin{equation}\label{eq:right action}
D\mapsto D\upsilon(\cdot):=\int_{\graphsN}P_\upsilon(G,\cdot)\gamma_D(dG).\end{equation}
In other words,  $D\upsilon$ is the probability measure of $\Gamma'$ obtained by first generating $\Gamma\sim\gamma_D$ and, given $\Gamma=G$, taking $\Gamma'=W(G)$, where $W\sim\Omega_{\upsilon}$ is a random rewiring map.

Alternatively, we can express $D$ as a countable vector $(D(F))_{F\in\mathcal{G}^*}$ with
\[D(F):=\gamma_D(\{G\in\graphsN:\,G|_{[n]}=F\}),\quad F\in\graphsn,\quad n\in\Nb.\]
Moreover, $\upsilon$ gives rise to an infinite by infinite array $(\upsilon(F,F'))_{F,F'\in\mathcal{G}^*}$ with
\[\upsilon(F,F'):=\left\{\begin{array}{cc} \Omega_{\upsilon}(\{W\in\wireN:\,W|_{[n]}(F)=F'\}),& F,F'\in\graphsn\text{ for some }n\in\Nb,\\
0,& \text{otherwise.}
\end{array}\right.\]
In this way, $(\upsilon(F,F'))_{F,F'\in\mathcal{G}^*}$ has finitely many non-zero entries in each row and the usual definition of $D\upsilon=(D'(F))_{F\in\mathcal{G}^*}$ by
\begin{equation}\label{eq:right action-2}
D'(F):=\sum_{F'\in\graphsn}D(F')\upsilon(F',F),\quad F\in\graphsn,\quad n\in\Nb,\end{equation}
coincides with \eqref{eq:right action}.
(Note that the array $(\upsilon(F,F'))_{F,F'\in\mathcal{G}^*}$ does not determine $\upsilon$, but its action on $\limitdensities$ through \eqref{eq:right action-2} agrees with the action of $\upsilon$ in \eqref{eq:right action}.)

For a graph-valued process $\Gbf=\{\Gbf_G:\,G\in\graphsN\}$, we write $\Gbf_{\limitdensities}=\{\Gbf_{D}:\,D\in\limitdensities\}$ to denote the process derived from $\Gbf$ by defining the law of $\Gbf_D=(\Gamma_t)_{t\in T}$ as that of the collection of random graphs obtained by taking $\Gamma_0\sim\gamma_D$ and, given $\Gamma_0=G$, putting $\Gbf_D=\Gbf_G$.
Provided $|\Gamma_t|$ exists for all $t\in T$, we define $|\Gbf_D|:=(|\Gamma_t|)_{t\in T}$ as the collection of graph limits at all times of $\Gbf_D$.
If $|\Gbf_D|$ exists for all $D\in\limitdensities$, we write $|\Gbf_{\limitdensities}|:=\{|\Gbf_D|:\,D\in\limitdensities\}$ to denote the associated $\limitdensities$-valued process.

\begin{thm}[Graph limit chain]\label{thm:induced density chain}
Let $\mathbf{\Gamma}=\{\Gbf_G:\,G\in\graphsN\}$ be a discrete-time, exchangeable Feller chain on $\graphsN$.
Then $|\Gbf_{\limitdensities}|$  exists almost surely and is a Feller chain on $\limitdensities$.  
Moreover, $|\Gbf_{D}|\equalinlaw\mathbf{D}^*_{D,\Upsilon}$ for every $D\in\limitdensities$, where $\mathbf{D}^*_{D,\Upsilon}:=(D^*_m)_{m\geq0}$ satisfies $D^*_0=D$ and
\begin{equation}\label{eq:iterated product}
D^*_m:=D^*_{m-1}{Y_m}=D{Y_1}\cdots {Y_m},\quad m\geq1,\end{equation}
with $Y_1,Y_2,\ldots$ i.i.d.\ from $\Upsilon$, the probability measure associated to $\Gbf$ through Theorem \ref{thm:discrete char}.
\end{thm}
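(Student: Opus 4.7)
The plan is to leverage the constructive representation from Theorem~\ref{thm:discrete char} together with the Aldous--Hoover theorem to obtain simultaneously the almost sure existence of $|\Gbf_{\limitdensities}|$, its Feller property, and the explicit form~\eqref{eq:iterated product}. By Theorem~\ref{thm:discrete char}, I may and do assume $\Gbf \equalinlaw \Gbf^*_{\Upsilon}$, so that $\Gamma_m = W_m \circ \cdots \circ W_1(\Gamma_0)$ for i.i.d.\ $W_1, W_2, \ldots \sim \omega = \Omega_\Upsilon$. Setting $Y_m := |W_m|$, the sequence $Y_1, Y_2, \ldots$ is i.i.d.\ from $\Upsilon$ by the very construction of $\Omega_\Upsilon$.

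The first substantive step is to show by induction on $m$ that, given $\Gamma_0 \sim \gamma_D$, the graph $\Gamma_m$ is exchangeable and dissociated. The base case is immediate since $\gamma_D$ is. For the inductive step, the rewiring operation~\eqref{eq:wire G} acts edgewise, so for any $S \subseteq \Nb$ one has $(W(G))|_S = W|_S(G|_S)$. Combined with the independence of $W_m$ from $\Gamma_{m-1}$ and the dissociation and exchangeability of each, this forces $\Gamma_m$ to be both exchangeable (since rewiring commutes with permutations, $(W_m(\Gamma_{m-1}))^\sigma = W_m^\sigma(\Gamma_{m-1}^\sigma)$) and dissociated (since restrictions to disjoint subsets depend on disjoint, hence independent, blocks of $(W_m, \Gamma_{m-1})$). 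Applying the Aldous--Hoover theorem to each $\Gamma_m$ and intersecting over the countable index set yields the almost sure existence of $|\Gbf_D|$ for every $D \in \limitdensities$.

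For the identification, I would argue by induction that, conditional on $(D, Y_1, \ldots, Y_m)$, the law of $\Gamma_m$ is precisely $\gamma_{D Y_1 \cdots Y_m}$. By the inductive hypothesis $\Gamma_{m-1}$ is conditionally $\gamma_{D Y_1 \cdots Y_{m-1}}$-distributed, and applying an independent $W_m \sim \Omega_{Y_m}$ produces by definition~\eqref{eq:right action} the distribution $\gamma_{D Y_1 \cdots Y_m}$. Since dissociated exchangeable measures on $\graphsN$ are in bijection with elements of $\limitdensities$, this identification reads $|\Gamma_m| = D Y_1 \cdots Y_m$ almost surely, establishing $|\Gbf_D| \equalinlaw \mathbf{D}^*_{D,\Upsilon}$.

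The Feller property of the induced chain then reduces to showing that the transition operator $Pf(D) = \int f(D\upsilon)\,\Upsilon(d\upsilon)$ carries $C(\limitdensities)$ into itself. For each fixed $\upsilon$, \eqref{eq:right action-2} expresses every coordinate $(D\upsilon)(F)$ as a finite linear combination of coordinates of $D$ with coefficients $\upsilon(F', F)$, so $D \mapsto D\upsilon$ is continuous in the product-discrete topology on $\limitdensities$; bounded convergence then propagates this continuity through the $\Upsilon$-integral. I expect the main obstacle to be the identification step, specifically the claim that the composition of the dissociated exchangeable laws $\gamma_D$ and $\Omega_\upsilon$ under the rewiring action is again dissociated exchangeable with parameter $D\upsilon$: everything rests on verifying that the rewiring action preserves dissociation on disjoint vertex sets and on matching the resulting finite-dimensional distributions with~\eqref{eq:right action-2}, after which the remainder is a routine combination of Aldous--Hoover and continuity.
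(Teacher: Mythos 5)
Your proposal is correct and follows essentially the same route as the paper: represent $\Gbf$ as the rewiring chain $\Gbf^*_\Upsilon$ via Theorem~\ref{thm:discrete char}, invoke Aldous--Hoover for almost sure existence of each $|\Gamma_m|$, identify $|\Gamma_m|=D Y_1\cdots Y_m$ through the action \eqref{eq:right action}, and get the Feller property from continuity of $D\mapsto D\upsilon$ plus bounded convergence (the paper packages the composition step as Lemma~\ref{lemma:existence wire density}, $P_{|W'\circ W|}=P_{|W|}P_{|W'|}$, but this is the same computation you carry out at the level of graph distributions). One small imprecision: your second paragraph asserts that $\Gamma_m$ is \emph{unconditionally} dissociated, which fails whenever $\Upsilon$ is not a point mass, since $W_m\sim\Omega_\Upsilon$ is a mixture and its restrictions to disjoint vertex sets share the mixing variable; this does no harm because exchangeability alone suffices for the existence of $|\Gamma_m|$, and your identification step correctly conditions on $(Y_1,\ldots,Y_m)$, where dissociation does hold.
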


\subsection{Continuous-time Feller processes}\label{section:continuous-intro}
In discrete time, we use an i.i.d.\ sequence of rewiring maps to construct the representative Markov chain $\mathbf{\Gamma}^*_{\Upsilon}$, cf.\ Equation \eqref{eq:iterated construction} and Theorem \ref{thm:discrete char}.
In continuous time, we construct a representative Markov process $\mathbf{\Gamma}^*_{\omega}=\{\Gbf^*_{G,\omega}:\,G\in\graphsN\}$ from a Poisson point process $\mathbf{W}=\{(t,W_t)\}\subset[0,\infty)\times\wireN$ with intensity $dt\otimes\omega$.
Here, $dt$ denotes Lebesgue measure on $[0,\infty)$ and $\omega$ satisfies
\begin{equation}\label{eq:regularity omega}
\omega(\{\id\})=0\quad\text{and}\quad\omega(\{W\in\wireN:W|_{[n]}\neq\idn\})<\infty\text{ for every }n\in\mathbb{N},\end{equation}
with $\mathbf{Id}_V$ denoting the identity $\mathcal{G}_V\rightarrow\mathcal{G}_V$, $V\subseteq\Nb$.

We construct each $\Gbf_{G,\omega}^*$ through its finite restrictions $\mathbf{\Gamma}^{*[n]}_{G,\omega}:=(\Gamma^{*[n]}_t)_{t\geq0}$ on $\graphsn$ by first putting $\Gamma^{*[n]}_0:=G|_{[n]}$ and then defining
\begin{equation}\label{eq:PPP construction}
\begin{array}{l}
\bullet\quad \Gamma^{*[n]}_t:=W_t|_{[n]}(\Gamma^{*[n]}_{t-}),\text{ if }t>0\text{ is an atom time of }\mathbf{W}\text{ for which }W_t|_{[n]}\neq\idn, \text{ or}\\
\bullet\quad \Gamma^{*[n]}_{t}:=\Gamma^{*[n]}_{t-},\text{ otherwise}.
\end{array}
\end{equation}
Above, we have written $\Gamma_{t-}^{*[n]}:=\lim_{s\uparrow t}\Gamma_{s}^{*[n]}$ to denote the state of $\mathbf{\Gamma}^{*[n]}_{G,\omega}$ in the instant before time $t>0$.

By \eqref{eq:regularity omega}, every $\mathbf{\Gamma}^{*[n]}_{G,\omega}$ has c\`adl\`ag paths and is a Markov chain on $\graphsn$.  
Moreover, if $\omega$ is exchangeable, then so is $\mathbf{\Gamma}^{*[n]}_{\omega}$. 
By construction, $\mathbf{\Gamma}^{*[m]}_{G,\omega}$ is the restriction to $\mathcal{G}_{[m]}$ of $\mathbf{\Gamma}^{*[n]}_{G,\omega}$, for every $m\leq n$ and every $G\in\graphsN$.
In particular, $\Gbf^{*[n]}_{G,\omega}=\Gbf^{*[n]}_{G',\omega}$ for all $G,G'\in\graphsN$ such that $G|_{[n]}=G'|_{[n]}$.
Thus, $(\mathbf{\Gamma}^{*[n]}_{G,\omega})_{n\in\Nb}$ determines a unique collection $\mathbf{\Gamma}^*_{G,\omega}=(\Gamma^*_t)_{t\geq0}$ in $\graphsN$, for every $G\in\graphsN$.
We define the process $\Gbf^*_{\omega}=\{\Gbf_{G,\omega}^{*}:\,G\in\graphsN\}$ as the family of all such collections generated from the same Poisson point process.

\begin{thm}[Poissonian construction]\label{thm:Poisson}
Let $\mathbf{\Gamma}=\{\Gbf_G:\,G\in\graphsN\}$ be an exchangeable Feller process on $\graphsN$. 
Then there exists an exchangeable measure $\omega$ satisfying \eqref{eq:regularity omega} such that $\mathbf{\Gamma}\equalinlaw\Gbf_{\omega}^*$, as constructed in \eqref{eq:PPP construction}.
\end{thm}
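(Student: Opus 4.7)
My strategy is to reduce to finite restrictions, apply the discrete-time characterization (Theorem \ref{thm:discrete char}) to skeletons at ever finer time-scales, and extract $\omega$ as a limiting intensity. The fact that each $\Gbf^{[n]}$ lives on a finite state space makes the analysis tractable.

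\medskip

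First I would use consistency together with the Feller property (via Proposition \ref{prop:Feller equiv}) to conclude that, for every $n\in\Nb$, the restriction $\Gbf^{[n]}$ is a continuous-time Feller process on the finite set $\graphsn$, hence a Markov chain with a bounded, $\symmetricn$-invariant $Q$-matrix $Q_n=(q_n(F,F'))_{F,F'\in\graphsn}$. This gives a candidate target: an exchangeable measure $\omega_n$ on $\wiren\setminus\{\idn\}$ satisfying
\[
\omega_n\bigl(\{W\in\wiren:\,W(F)=F'\}\bigr)=q_n(F,F'),\quad F\neq F'\in\graphsn,
\]
which, plugged into the Poissonian construction \eqref{eq:PPP construction}, reproduces the dynamics of $\Gbf^{[n]}$.

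\medskip

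Next, for each $k\geq1$, consider the skeleton $(\Gamma_{m/k})_{m\geq0}$. This is a discrete-time, exchangeable, consistent Markov chain on $\graphsN$, so Theorem \ref{thm:discrete char} produces i.i.d.\ rewiring maps $W_1^{(k)},W_2^{(k)},\ldots\sim\omega^{(k)}$ exchangeable on $\wireN$ that drive it. Restricted to $\wiren$, $\omega^{(k)}$ assigns to each $W_0\in\wiren$ a mass $\omega^{(k)}_n(\{W_0\})$ which, because of the Feller property, satisfies $\omega^{(k)}_n(\{\idn\})\to1$ as $k\to\infty$, while for $W_0\neq\idn$ the scaled weights $k\cdot\omega^{(k)}_n(\{W_0\})$ converge to a finite limit; this is because on the finite state space $\graphsn$ the entries of $P_{1/k}-\mathrm{I}$, divided by $1/k$, converge to $Q_n$. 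I would define
\[
\omega_n(\{W_0\}):=\lim_{k\to\infty} k\cdot\omega^{(k)}_n(\{W_0\}),\quad W_0\in\wiren\setminus\{\idn\},
\]
obtaining a finite, exchangeable measure on $\wiren\setminus\{\idn\}$.

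\medskip

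Then I would verify two compatibility properties. Consistency across $n$: the restriction map $\wire_{[n+1]}\to\wiren$ intertwines with the Poisson construction \eqref{eq:PPP construction}, and consistency of $\omega^{(k)}$ (implicit in Theorem \ref{thm:discrete char}) passes to the limit, giving $\omega_n=\omega_{n+1}\circ\pi_n^{-1}$. Kolmogorov extension on the Polish space $\wireN$ then yields a single $\sigma$-finite exchangeable measure $\omega$ whose $[n]$-marginal away from $\idn$ is $\omega_n$; in particular the regularity conditions \eqref{eq:regularity omega} hold because each $\omega_n$ is a finite measure on $\wiren\setminus\{\idn\}$. Matching generators: the Poisson point process with intensity $dt\otimes\omega$ drives a process whose restriction to $[n]$ is a continuous-time Markov chain on $\graphsn$ with jump rates equal to $\omega_n(\{W:W(F)=F'\})=q_n(F,F')$ for $F\neq F'$, hence agrees in law with $\Gbf^{[n]}$ for every $n$; compatibility as $n$ varies gives $\Gbf\equalinlaw\Gbf^*_\omega$.

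\medskip

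The main obstacle is establishing existence (not merely subsequential existence) of the limit $k\cdot\omega^{(k)}_n(\{W_0\})\to\omega_n(\{W_0\})$ and showing it is compatible with the generator $Q_n$ on every finite restriction. The finiteness of $\wiren$ reduces this to a classical fact about embedded Markov chains of a continuous-time chain on a finite state space, but one must be careful that the representation $\omega^{(k)}$ given by Theorem \ref{thm:discrete char} (which involves an auxiliary mixing measure on $\limitwiredensities$) is compatible with passage to the limit; the exchangeability constraint must be tracked throughout. A secondary subtlety is that the measure $\omega$ on $\wireN$ will only be $\sigma$-finite, not finite, so Kolmogorov extension must be applied on the $\sigma$-ring generated by sets of the form $\{W:W|_{[n]}\neq\idn\}$, on each of which $\omega$ is finite by construction.
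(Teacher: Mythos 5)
Your overall strategy matches the paper's: both arguments represent the small-time transition kernel by an exchangeable probability measure on $\wireN$ (via Theorems \ref{thm:discrete-rep} and \ref{thm:rewiring char}), extract an infinitesimal jump measure through its finite-dimensional restrictions, extend by Carath\'eodory to a $\sigma$-finite exchangeable measure on $\wireN\setminus\{\id\}$ satisfying \eqref{eq:regularity omega}, and match generators using the thinning property of the Poisson construction. So the architecture is the paper's.

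The genuine gap is in the step you yourself flag as the main obstacle: the existence of $\lim_{k\to\infty}k\,\omega^{(k)}_n(\{W_0\})$ for an individual $W_0\in\wiren\setminus\{\idn\}$. Your justification --- that $(P_{1/k}-\mathrm{I})/(1/k)\to Q_n$ on the finite state space $\graphsn$ --- does not suffice, because the measure $\omega^{(k)}_n$ on $\wiren$ carries strictly more information than the transition matrix it induces on $\graphsn$: the matrix records only the marginal masses $\omega^{(k)}_n(\{W:W(F)=F'\})$ over pairs $(F,F')$, and many distinct measures on $\wiren$ share these marginals (knowing $V(F)$ for a single $F$ reveals only the entries $V^{ij}_{F^{ij}}$, i.e.\ half of $V$). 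Hence convergence of the scaled transition matrices yields convergence of $k\,\omega^{(k)}_n$ only on the sets $\{W:W(F)=F'\}$, not atom by atom; compactness gives subsequential limits but not a unique limit. Compounding this, Theorem \ref{thm:discrete char} produces \emph{some} representing measure $\omega^{(k)}$ for each skeleton separately, with no guarantee that the choices for different $k$ cohere (e.g.\ that $\omega^{(k)}$ is the push-forward, under composition, of two independent copies drawn from $\omega^{(2k)}$). The paper closes exactly this hole by first choosing the entire family $(\omega_t)_{t>0}$ to satisfy the semigroup property $\omega_{t+s}(\cdot)=\int_{\wireN}\omega_t(\{W':W'\circ W\in\cdot\})\,\omega_s(dW)$ before taking $t\downarrow0$; with that coherence in hand the infinitesimal limit \eqref{eq:fidi jump rates} is well defined and Proposition \ref{prop:existence omega} applies. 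To repair your argument you would need an analogous compatible choice of the $\omega^{(k)}$ across scales (say along $k=2^m$), rather than an appeal to the embedded-chain fact for finite-state chains, which genuinely does not apply here.
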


\subsubsection{L\'evy--It\^o representation}

Our next theorem builds on Theorem \ref{thm:Poisson} by classifying the discontinuities of $\Gbf$ into three types.
If $s>0$ is a discontinuity time for $\Gbf_G=(\Gamma_t)_{t\geq0}$, then either
\begin{itemize}
	\item[(I)] there is a unique edge $ij$ for which $\Gamma_{s-}^{ij}\neq \Gamma^{ij}_s$,
	\item[(II)] there is a unique vertex $i\geq1$ for which the edge statuses $(\Gamma_{s-}^{ij})_{j\neq i}$ are updated according to an exchangeable transition probability on $\{0,1\}^{\mathbb{N}}$ and all edges not incident to $i$ remain fixed, or
	\item[(III)] the edges of $\Gamma_{s-}$ are updated according to a random rewiring map as in discrete-time.
\end{itemize}
Qualitatively, the above description separates the jumps of $\mathbf{\Gamma}_{G}$ according to their local and global characteristics.  
Type (I) discontinuities are local---they involve a change in status to just a single edge---while Type (III) discontinuities are global---they involve a change to a positive proportion of all edges.  
Type (II) discontinuities lie in between---they involve a change to infinitely many but a zero limiting proportion of edges.
% $\lim_{n\rightarrow\infty}2n/(n(n-1))=0$.  So, although an infinite number of edges change status in (II), the overall proportion of edges affected is zero.  
According to this characterization, there can be no discontinuities affecting the status of more than one but a zero limiting fraction of vertices or more than one but a zero limiting fraction of edges.  

The decomposition of the discontinuities into Types (I)-(III) prompts the {\em L\'evy--It\^o decomposition} of the intensity measure $\omega$ from Theorem \ref{thm:Poisson}.
More specifically, Theorem \ref{thm:Levy-Ito} below decomposes the jump measure of $\omega$ into a unique quadruple $(\mathbf{e},\mathbf{v},\Sigma,\Upsilon)$, where $\mathbf{e}=(\mathbf{e}_0,\mathbf{e}_1)$ is a unique pair of non-negative constants, $\mathbf{v}$ is a unique non-negative constant, $\Sigma$ is a unique probability measure on the space of $2\times 2$ stochastic matrices, and $\Upsilon$ is a unique measure on the space of rewiring limits.
These components contribute to the behavior of $\Gbf_G$, $G\in\graphsN$, as follows.
\begin{itemize}
	\item[(I)] For unique constants $\mathbf{e}_0,\mathbf{e}_1\geq0$, each edge $ij$, $i\neq j$, changes its status independently of all others: each edge in $\Gbf_G$ is removed independently at Exponential rate $\mathbf{e}_0\geq0$ and each absent edge in $\Gbf_G$ is added independently at Exponential rate $\mathbf{e}_1\geq0$.  A jump of this kind results in a  Type (I) discontinuity.
	\item[(II)] Each vertex jumps independently at Exponential rate $\mathbf{v}\geq0$.
When vertex $i\in\mathbb{N}$ experiences a jump, the statuses of edges $ij$, $j\neq i$, are updated conditionally independently according to a random transition probability matrix $S$ generated from a unique probability measure $\Sigma$ on the space of $2\times 2$ stochastic matrices:
\[\mathbb{P}\{\Gamma^{ij}_s=k'\mid\Gamma^{ij}_{s-}=k,\,S\}=S_{kk'},\quad k,k'=0,1,\]  
where $S=(S_{kk'})\sim \Sigma$.
Edges that do not involve the specified vertex $i$ stay fixed.
A jump of this kind results in a Type (II) discontinuity.
	\item[(III)] A unique measure $\Upsilon$ on $\limitwiredensities$ determines a transition rate measure $\Omega_{\Upsilon}$, akin to the induced transition probability measure \eqref{eq:induced tps} discussed in Section \ref{section:discrete graph limit}.
%This transition rate measure is akin to the induced transition probability measures in \eqref{eq:induced tps}.
%These jumps are akin to the jumps that occur in discrete-time, except $\Upsilon$ need not be finite.
A jump of this kind results in a Type (III) discontinuity.
\end{itemize}

For $i\in\mathbb{N}$ and any probability measure $\Sigma$ on $2\times 2$ stochastic matrices, we write $\Omega_{\Sigma}^{(i)}$ to denote the probability measure induced on $\wireN$ by the procedure applied to vertex $i$ in (II) above.
That is, we generate $W\sim\Omega_{\Sigma}^{(i)}$ by first taking $S\sim\Sigma$ and, given $S$, generating $W^{ij}=(W_0^{ij},W_1^{ij})$, $j\neq i$, as independent and identically distributed from 
\begin{align*}
\mathbb{P}\{W_0^{ij}=k\mid S\}&=S_{0k},\quad k=0,1\text{ and }j\neq i,\quad\text{and}\\
\mathbb{P}\{W_1^{ij}=k\mid S\}&=S_{1k},\quad k=0,1\text{ and }j\neq i.\end{align*}
For all other edges $i'j'$, $i'\neq j'$, such that $i\notin\{i',j'\}$, we put $W^{i'j'}=(0,1)$.
Thus, the action of $W\sim\Omega_{\Sigma}^{(i)}$ on $G\in\graphsN$ results in a discontinuity of Type (II).
By slight abuse of notation, we write 
\begin{equation}\label{eq:Omega_Sigma}
\Omega_{\Sigma}:=\sum_{i=1}^{\infty}\Omega_{\Sigma}^{(i)}.\end{equation}
For $k=0,1$, we define $\epsilon_{k}$ as the measure that assigns unit mass to each rewiring map with a single off-diagonal entry equal to $(k,k)$ and all other off-diagonal entries $(0,1)$.

In the next theorem, $\mathbf{I}:=|\id|$ denotes the rewiring limit of the identity $\id:\wireN\rightarrow\wireN$ and $\upsilon_*^{(2)}$ is the mass assigned to $\idtwo$ by $\Omega_{\upsilon}$, for any $\upsilon\in\limitwiredensities$.

\begin{thm}[L\'evy--It\^o representation]\label{thm:Levy-Ito}
Let $\Gbf_{\omega}^*=\{\Gbf_{G,\omega}^*:\,G\in\graphsN\}$ be an exchangeable Feller process constructed in \eqref{eq:PPP construction} based on intensity $\omega$ satisfying \eqref{eq:regularity omega}.
Then there exist unique constants $\mathbf{e}_0,\mathbf{e}_1,\mathbf{v}\geq0$, a unique probability measure $\Sigma$ on $2\times2$ stochastic matrices, and a unique measure $\Upsilon$ on $\limitwiredensities$ satisfying
\begin{equation}\label{eq:regularity upsilon00}
\Upsilon(\{\mathbf{I}\})=0\quad\text{and}\quad\int_{\limitwiredensities}(1-\upsilon_*^{(2)})\Upsilon(d\upsilon)<\infty
\end{equation}
such that
\begin{equation}\omega=\Omega_{\Upsilon}+\mathbf{v}\Omega_{\Sigma}+\mathbf{e}_0\epsilon_0+\mathbf{e}_1\epsilon_1.\label{eq:Levy-Ito measure00}\end{equation}
\end{thm}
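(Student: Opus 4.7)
The plan is to take the exchangeable intensity measure $\omega$ guaranteed by Theorem \ref{thm:Poisson}, and decompose it along two axes: a \emph{global-versus-local} axis, measured by the rewiring limit $|W|$, and within the local regime a \emph{vertex-support} axis, measured by $V(W) := \{i \in \mathbb{N} : W^{ij} \neq (0,1)\text{ for some }j\}$. Maps with $|W| \neq \mathbf{I}$ are ``global'' and will be gathered into the Type~III term $\Omega_\Upsilon$; the remaining maps will be sorted into the identity ($V(W) = \emptyset$), single-edge ($|V(W)| = 2$, Type~I), or single-vertex-star ($|V(W)| = \infty$, Type~II) classes, matching $\mathbf{e}_0\epsilon_0 + \mathbf{e}_1\epsilon_1$ and $\mathbf{v}\Omega_\Sigma$ respectively.

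\textbf{Global part.} The rewiring-limit map $W \mapsto |W|$ is defined $\omega$-almost everywhere via the extension of Aldous--Hoover to $\{0,1\}\times\{0,1\}$-valued exchangeable arrays cited in Theorem \ref{thm:discrete-rep}, applied on each finite-measure slice $\{W : W|_{[n]} \neq \idn\}$. Disintegrating $\omega|_{\{|W| \neq \mathbf{I}\}}$ along this map yields a unique $\sigma$-finite measure $\Upsilon$ on $\limitwiredensities \setminus \{\mathbf{I}\}$ with
$$\omega|_{\{|W| \neq \mathbf{I}\}} = \int \Omega_\upsilon \, \Upsilon(d\upsilon);$$
specializing \eqref{eq:regularity omega} at $n = 2$ and using $\Omega_\upsilon(\{W : W|_{[2]} \neq \idtwo\}) = 1 - \upsilon_*^{(2)}$ delivers \eqref{eq:regularity upsilon00}.

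\textbf{Local part.} On $\{|W| = \mathbf{I}\}$, stratify by $|V(W)|$. Exchangeability forces the $\omega$-mass on $\{V(W) = S\}$ to depend only on $|S|$, so for finite $|V(W)| = k \geq 3$ the $\binom{n}{k}$ disjoint copies of this event with $S \subseteq [n]$, all sitting inside the finite-measure set $\{W|_{[n]} \neq \idn\}$, force the common mass to vanish after letting $n \to \infty$. The case $|V(W)| = 2$ leaves, by exchangeability, three candidates $(0,0), (1,0), (1,1)$ per edge; because a pure flip $(1,0)$ at rate $a$ on a single edge is indistinguishable in the generated Markov process from an independent pair of $(0,0)$ and $(1,1)$ atoms each at rate $a$, flip atoms can be canonically absorbed into the removal and addition rates to produce the unique constants $\mathbf{e}_0, \mathbf{e}_1 \geq 0$. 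The remaining case $|V(W)| = \infty$ with $|W| = \mathbf{I}$ gives Type~II: the vanishing-density constraint $|W| = \mathbf{I}$ forces $V(W)$ to concentrate on a star at a single distinguished vertex (any Aldous--Hoover latent-variable structure with two infinite vertex-classes would inject positive density into $|W|$), so the component splits as a sum of per-vertex terms $\omega^{(i)}$. De Finetti applied to the exchangeable sequence $(W^{ij})_{j \neq i}$ over $\{0,1\}^2$, combined with the conditional independence of $W_0^{ij}$ and $W_1^{ij}$ coming from the Markov property for each edge marginal, produces a unique mixing measure $\Sigma$ on $2 \times 2$ stochastic matrices and a unique total rate $\mathbf{v} \geq 0$.

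\textbf{Uniqueness and main obstacle.} Uniqueness of the quadruple $(\mathbf{e}, \mathbf{v}, \Sigma, \Upsilon)$ is immediate once the classification is in place, because each summand leaves a distinct fingerprint in the law of $\Gbf$: $(\mathbf{e}_0, \mathbf{e}_1)$ are the single-edge removal and addition rates read off from two-point marginal dynamics; $(\mathbf{v}, \Sigma)$ govern the rate and conditional law of discontinuities that alter infinitely many edges of a single row while fixing the graph limit; and $\Upsilon$ parameterizes the graph-limit jumps, which by the continuous-time analog of Theorem \ref{thm:induced density chain} are in bijection with the Type~III atoms. The main obstacle is the Type~II identification---ruling out exchangeable $\sigma$-finite components on $\wireN$ whose vertex support is infinite with both $V(W)$ and $V(W)^c$ infinite and not singled out by a distinguished row. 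The counting argument via $\binom{n}{k}$ controls only finite supports; for sparse-but-infinite supports, exclusion rests on a structural rigidity of exchangeable $\sigma$-finite actions on $\{0,1\} \times \{0,1\}$-valued arrays, leveraged against the density-zero condition $|W| = \mathbf{I}$ and the finite-measure regularity \eqref{eq:regularity omega}.
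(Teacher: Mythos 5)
Your overall architecture matches the paper's: split $\omega$ by whether $|W|=\mathbf{I}$, send the complement to $\Omega_{\Upsilon}$ via disintegration along the rewiring-limit map (with the regularity condition \eqref{eq:regularity upsilon00} read off from the $n=2$ slice of \eqref{eq:regularity omega}), and sort the remainder into edge-, vertex-, and identity-classes. Your global part, your $\binom{n}{k}$ counting for finite supports of size $k\geq 3$, and your absorption of the $(1,0)$ flip atoms into $\mathbf{e}_0,\mathbf{e}_1$ all agree with what the paper does (the paper likewise sets $\mathbf{e}_0=\mathbf{c}_0+\mathbf{c}_{10}$, $\mathbf{e}_1=\mathbf{c}_1+\mathbf{c}_{10}$).

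However, there is a genuine gap, and you have located it yourself: the exclusion of exchangeable components on $\{|W|=\mathbf{I}\}$ whose affected-vertex set is infinite without being a star at a single distinguished vertex (e.g.\ a sparse matching-like configuration, or infinitely many rows each carrying a vanishing but nonzero fraction of non-identity entries). Calling this ``a structural rigidity of exchangeable $\sigma$-finite actions'' names the difficulty without supplying the argument, and this is precisely where the bulk of the paper's proof lives (Lemma \ref{lemma:3}). The paper's mechanism is worth internalizing because it is not a soft rigidity principle but a concrete two-step device. First, restrict $\omega$ to the finite-measure event $\{W|_{[n]}\neq\idn\}$ and push forward by the $n$-shift $W\mapsto\overleftarrow{W}_n=(W^{n+i,n+j})_{i,j\geq1}$; the result $\overleftarrow{\omega}_n$ is a \emph{finite, fully exchangeable} measure, so de Finetti and Aldous--Hoover apply to it as to a probability measure. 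Applied to the indicator sequence $(\mathbf{1}\{L_{\overleftarrow{W}_n}(i)\geq\varepsilon\})_{i\geq1}$, de Finetti forces an exchangeable $\{0,1\}$-sequence with zero limiting frequency to be identically zero, which kills the ``infinitely many affected rows of density zero'' case; the analogous application of Aldous--Hoover to $\overleftarrow{W}_2$ kills the ``infinitely many affected edges of density zero'' case. Second, for $2\leq\#S_W(\varepsilon)<\infty$ (resp.\ $2\leq\#E_W<\infty$), exchangeability of $\overleftarrow{\omega}_n$ forces equal mass on the infinitely many disjoint events indexed by which shifted vertex (resp.\ edge) is distinguished, so any positive common mass contradicts finiteness of $\overleftarrow{\omega}_n$. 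Only after both steps does the support collapse to a single row or a single edge, at which point your de Finetti/stochastic-matrix identification of $(\mathbf{v},\Sigma)$ goes through (noting, as the paper does, that the ergodic $\{0,1\}^2$-laws must be collapsed to their marginals, i.e.\ to $2\times2$ stochastic matrices, since only the induced transition kernel is identified). Without these two steps your classification of the $\{|W|=\mathbf{I}\}$ component is incomplete.
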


\begin{rmk}
By Theorem \ref{thm:Poisson}, every exchangeable Feller process $\Gbf$ admits a construction by $\Gbf_{\omega}^*$ in \eqref{eq:PPP construction} for some $\omega$ satisfying \eqref{eq:regularity omega}.
Therefore, Theorem \ref{thm:Levy-Ito} provides a L\'evy--It\^o-type representation for all exchangeable $\graphsN$-valued Feller processes $\Gbf$.
\end{rmk}

\begin{rmk}
The  regularity condition \eqref{eq:regularity upsilon00} and characterization \eqref{eq:Levy-Ito measure00} recall a similar description for $\mathbb{R}^d$-valued L\'evy processes, which decompose as the superposition of independent Brownian motion with drift, a compound Poisson process, and a pure-jump martingale.  
In $\mathbb{R}^d$, the Poissonian component is characterized by a {\em L\'evy measure} $\Pi$ satisfying
\begin{equation}\label{eq:Levy regularity}
\Pi(\{(0,\ldots,0)\})=0\quad\text{and}\quad\int(1\wedge |x|^2)\Pi(dx)<\infty;\end{equation}
see Theorem 1 of \cite{BertoinLevy}.  
Only a little imagination is needed to appreciate the resemblance between \eqref{eq:regularity upsilon00} and \eqref{eq:Levy regularity}.
\end{rmk}

 Intuitively, \eqref{eq:regularity upsilon00} is the naive condition needed to ensure that $\Omega_{\Upsilon}$ satisfies \eqref{eq:regularity omega}.  By the Aldous--Hoover theorem (Theorem \ref{thm:A-H}), $\upsilon_*^{(2)}$ corresponds to the probability that a random rewiring map from $\Omega_{\upsilon}$ restricts to $\idtwo\in\mathcal{W}_2$; thus,
\[\int_{\limitwiredensities}(1-\upsilon_*^{(2)})\Upsilon(d\upsilon)<\infty\]
guarantees that the restriction of $\mathbf{\Gamma}$ to $\mathcal{G}_{[2]}$ has c\`adl\`ag sample paths.  
Under exchangeability, this is enough to guarantee that $\mathbf{\Gamma}^{[n]}$ has c\`adl\`ag paths for all $n\in\mathbb{N}$.

\subsubsection{Projection into the space of graph limits}

As in discrete-time, we write $\Gbf_{\limitdensities}$ for the process derived from $\Gbf$ by mixing with respect to each initial distribution $\gamma_D$, $D\in\limitdensities$.
Analogous to Theorem \ref{thm:induced density chain}, every exchangeable Feller process almost surely projects to a Feller process in the space of graph limits.

\begin{thm}[Graph limit process]\label{thm:induced density process}
Let $\mathbf{\Gamma}=\{\Gbf_G:\,G\in\graphsN\}$ be an exchangeable Feller process on $\graphsN$.
Then $|\mathbf{\Gamma}_{\limitdensities}|$ exists almost surely and is a Feller process on $\limitdensities$.
Moreover, each $|\mathbf{\Gamma}_D|$ is continuous at all $t>0$ except possibly at the times of Type (III) discontinuities.
\end{thm}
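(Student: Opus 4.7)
The plan is to reduce to the Poissonian construction $\Gbf^*_\omega$ from Theorem \ref{thm:Poisson}, apply the L\'evy--It\^o decomposition of Theorem \ref{thm:Levy-Ito}, and analyze how each component of $\omega$ acts on the graph limit $|\Gamma_t|$. The overarching claim is that Type (I) and Type (II) jumps are invisible to $|\Gamma_t|$ in the $n\to\infty$ limit, while Type (III) jumps act on $|\Gamma_t|$ through the composition \eqref{eq:right action} and account for all discontinuities.

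\textbf{Existence and Markov property.} Since $\gamma_D$ is exchangeable and the transition law \eqref{eq:exch tps} preserves exchangeability, $\Gamma_t$ is exchangeable for every $t \geq 0$, and the Aldous--Hoover theorem gives $|\Gamma_t|$ almost surely for each fixed $t$. The regularity \eqref{eq:regularity omega} ensures that each restriction $\mathbf{\Gamma}^{[n]}_D$ has c\`adl\`ag paths with finitely many jumps on any compact interval; combining a.s.\ existence on a countable dense set of times with piecewise constancy of the restrictions between jump times lifts this to simultaneous a.s.\ existence of $|\Gamma_t|$ over all $t \geq 0$. The Markov property of $|\mathbf{\Gamma}_D|$ then follows because the dissociated structure of $\gamma_D$ is preserved by the transition kernel: the push-forward of $\gamma_D$ under the action of $\omega$ is a mixture of measures $\gamma_{D'}$, and the mixing measure on $\limitdensities$ depends on $D$ alone.

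\textbf{Continuity by jump type and Feller property.} A Type (I) jump flips a single edge $ij$, changing $\ind(F, \Gamma_t^{[n]})$ by $O(n^{m-2})$ and hence $\delta(F, \Gamma_t^{[n]})$ by $O(1/n^2)$. A Type (II) jump resamples all edges incident to a single vertex $i$, affecting at most $O(n^{m-1})$ of the $n^{\downarrow m}$ injections $[m]\to[n]$, hence changing $\delta(F, \cdot)$ by $O(1/n)$. In both cases the instantaneous increment vanishes as $n \to \infty$, and an ergodic/martingale argument---using edge-independence for Type (I) and vertex-level conditional independence for Type (II)---shows the aggregated effect over $[0,t]$ is a continuous, deterministic evolution of $|\Gamma_t|$. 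A Type (III) jump at time $s$ applies a dissociated rewiring $W \sim \Omega_\upsilon$, producing $\Gamma_s = W(\Gamma_{s-})$ whose graph limit is $|\Gamma_{s-}|\upsilon$ via \eqref{eq:right action}---a genuine discontinuity in general. The Feller property of $|\mathbf{\Gamma}_{\limitdensities}|$ is then inherited from compactness of $\limitdensities$, continuity of the map $D \mapsto D\upsilon$ in $D$ for each fixed $\upsilon$, and the integrability control \eqref{eq:regularity upsilon00} on $\Upsilon$.

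\textbf{Main obstacle.} The technical heart is the cumulative analysis of Type (I) jumps: although each edge flip changes $\delta(F, \Gamma_t^{[n]})$ by only $O(1/n^2)$, the number of Type (I) jumps in $[0,t]$ affecting edges in $[n]$ is Poisson with parameter of order $n^2 t$, so a naive pointwise bound gives $O(t)$, not vanishing. The resolution is a Doob/martingale or propagation-of-chaos argument showing that the aggregated flips concentrate about the deterministic mean-field limit obtained from the two-state edge Markov chain with rates $(\mathbf{e}_0,\mathbf{e}_1)$, with fluctuations of order $n^{-1/2}$ uniformly on compact time intervals; only then can one conclude that $t \mapsto |\Gamma_t|$ is path-continuous between Type (III) jump times.
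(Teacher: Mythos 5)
Your high-level picture is right---reduce to the Poissonian construction, use the L\'evy--It\^o decomposition, observe that Type (I) and (II) jumps move edge densities by $O(n^{-2})$ and $O(n^{-1})$ respectively while Type (III) jumps act by $D\mapsto D\upsilon$---but the proposal has a genuine gap exactly where you flag the ``main obstacle,'' and the preliminary argument you offer for existence does not close it. The claim that ``piecewise constancy of the restrictions between jump times'' lifts almost sure existence from a countable dense set of times to all $t\geq0$ fails: each restriction $\Gamma^{[n]}_t$ is piecewise constant, but its jump times accumulate as $n\to\infty$, so generically the jump times of the full process are dense in $[0,\infty)$ and $t\mapsto\delta(F,\Gamma_t)$ is \emph{not} locally constant---under the edge-flip component it evolves continuously. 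Knowing $\lim_n\delta(F,\Gamma_t|_{[n]})$ exists at rational $t$ says nothing about an irrational $t$ without some uniform-in-$t$ control, which is precisely the point at issue. You then correctly identify that the cumulative effect of order $n^2t$ Type (I) jumps (and, analogously, order $nt$ Type (II) jumps) must be shown to concentrate, but you only name a strategy (``Doob/martingale or propagation-of-chaos argument'') without carrying it out. Since both the simultaneous existence of $|\Gamma_t|$ and the continuity between Type (III) jumps rest entirely on that unexecuted step, the proof is incomplete at its core. The Feller property at $t=0$ is likewise asserted from ``integrability control'' \eqref{eq:regularity upsilon00}, whereas one actually has to prove $|W_t^*|\rightarrow_P\mathbf{I}$ as $t\downarrow0$, which requires an argument.

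For comparison, the paper closes this gap by a different mechanism: it regards the edge trajectories $(\Gamma^{ij}_{[0,1]})_{i,j\geq1}$ as a weakly exchangeable array with values in a Polish space $\mathcal{I}^*$ of c\`adl\`ag $\{0,1\}$-paths, applies the Aldous--Hoover theorem (Theorem \ref{thm:A-H}) on path space to reduce to dissociated arrays, and then partitions $[0,1]$ into a finite set $S_\varepsilon$ of times where a single edge jumps with probability at least $\varepsilon$ together with countably many intervals $J_l$ on which each edge jumps with probability less than $\varepsilon$. The law of large numbers for dissociated arrays then bounds the fraction of edges jumping in each $J_l$ by $\varepsilon$, hence the oscillation of the upper and lower densities $\delta^{\pm}_t(F)$ over $J_l$ by $2\varepsilon$; combined with almost sure existence at the (countably many) endpoints and $\varepsilon\downarrow0$, this yields simultaneous existence for all $t$, and the same counting argument (a single vertex touches only $O(n)$ of the $\binom{n}{2}$ edges) rules out Type (I) and (II) discontinuities of the limit. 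If you want to salvage your route, you would need to actually prove the uniform concentration you invoke; the paper's exchangeability-plus-LLN argument is the cleaner way to get it.
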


\section{Preliminaries: Graph-valued processes}\label{section:preliminaries}

Proof of our main theorems is spread over Sections \ref{section:discrete} and \ref{section:continuous}.
In preparation, we first develop the machinery of exchangeable random graphs, graph-valued processes, rewiring maps, graph limits, and rewiring limits.

\subsection{Exchangeable random graphs}\label{section:exchangeable random graphs}

\begin{defn}[Exchangeable random graphs]
A random graph $\Gamma\in\mathcal{G}_V$ is {\em exchangeable} if $\Gamma\equalinlaw\Gamma^{\sigma}$ for all $\sigma\in\mathfrak{S}_V$.
 A measure $\gamma$ on $\graphsN$ is {\em exchangeable} if $\gamma(S)=\gamma(S^{\sigma})$ for all $\sigma\in\mathfrak{S}_V$ and all measurable subsets $S\subseteq\graphsN$, where $S^{\sigma}:=\{G^{\sigma}:\,G\in S\}$.
\end{defn}

%\begin{example}[Erd\H{o}s--R\'enyi random graph \cite{ErdosRenyi1959}]\label{ex:Erdos-Renyi}

%\end{example}

In the following theorem, $\mathcal{X}$ is any Polish space.

\begin{thm}[Aldous--Hoover \cite{AldousExchangeability}, Theorem 14.21]\label{thm:A-H}
Let $X:=(X^{ij})_{i,j\geq1}$ be a symmetric $\mathcal{X}$-valued array for which $X\equalinlaw(X^{\sigma(i)\sigma(j)})_{i,j\geq1}$ for all $\sigma\in\symmetricN$.  Then there exists a measurable function $f:[0,1]^4\rightarrow\mathcal{X}$ such that $f(\cdot, b,c,\cdot)=f(\cdot,c,b,\cdot)$ and $X\equalinlaw X^*$, where $X^*:=(X^{*ij})_{i,j\geq1}$ is defined by
\begin{equation}\label{eq:AH-rep}
X^{*ij}:=f(\zeta_{\emptyset},\zeta_{\{i\}},\zeta_{\{j\}},\zeta_{\{i,j\}}),\quad i,j\geq 1,\end{equation}
for $\{\zeta_{\emptyset}; (\zeta_{\{i\}})_{i\geq1}; (\zeta_{\{i,j\}})_{j>i\geq1}\}$ i.i.d.\ Uniform random variables on $[0,1]$.
\end{thm}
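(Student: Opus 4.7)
The plan is to prove Theorem \ref{thm:A-H} via the standard two-stage strategy for representation of exchangeable arrays: first reduce to the ergodic (dissociated) case by an ergodic decomposition with respect to the action of $\symmetricN$, and then code a dissociated symmetric exchangeable array by a measurable function of three independent uniform families indexed by $\emptyset$, singletons, and unordered pairs.

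First I would invoke the ergodic decomposition. The group $\symmetricN$ acts on the space of symmetric $\mathcal{X}$-valued arrays by relabeling, and the set of exchangeable laws forms a Choquet simplex whose extreme points coincide with the dissociated laws (this uses the Hewitt--Savage zero--one law applied to the finite-permutation invariant $\sigma$-algebra, together with the fact that the invariant and tail $\sigma$-algebras agree under dissociation). This yields a unique mixing measure $\mu$ on the Polish space of extreme points such that the law of $X$ equals $\int P_\theta\,\mu(d\theta)$. Drawing $\zeta_\emptyset\sim\mathrm{Uniform}[0,1]$ and pushing forward through a Borel isomorphism between $([0,1],\mathrm{Leb})$ and $\mu$ reduces the theorem to exhibiting, for each fixed $\theta$, a representing function $f_\theta$ built from the remaining seed families; a measurable selection argument (via the Polish structure of $\mathcal{X}$) then collapses the $f_\theta$ into a single jointly measurable $f$.

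Second, assume $X$ is dissociated symmetric exchangeable. For each $i$, let $\mathcal{F}_i$ be the tail $\sigma$-algebra generated by the $i$th row, i.e.\ $\mathcal{F}_i=\bigcap_n\sigma(X^{ij}:j\geq n)$. Using dissociation together with a reverse martingale argument of the form $\mathbb{E}[h(X)\mid X^{ij}:j\leq n]\to \mathbb{E}[h(X)\mid\mathcal{F}_i]$, one checks that the $\mathcal{F}_i$ are jointly i.i.d.\ and independent of each single edge, so Doob's theorem plus a Borel isomorphism produces i.i.d.\ uniforms $\zeta_{\{i\}}$ with $\mathcal{F}_i=\sigma(\zeta_{\{i\}})$. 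The crucial step is then to verify that, conditional on $(\zeta_{\{i\}})_{i\geq1}$, the edge variables $(X^{ij})_{i<j}$ are mutually independent; this follows from another reverse martingale computation combined with dissociation across disjoint index sets. Once conditional independence is in hand, the quantile transform applied to the regular conditional distribution of $X^{ij}$ given $(\zeta_\emptyset,\zeta_{\{i\}},\zeta_{\{j\}})$ expresses $X^{ij}$ as $f(\zeta_\emptyset,\zeta_{\{i\}},\zeta_{\{j\}},\zeta_{\{i,j\}})$ for a jointly measurable $f$; symmetry $f(\cdot,b,c,\cdot)=f(\cdot,c,b,\cdot)$ is enforced by using the unordered pair $\{i,j\}$ as the index for the edge seed and symmetrizing in $b,c$ at the end.

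The main obstacle will be the conditional independence step in the second stage, that is, showing that the vertex-level tail $\sigma$-algebras $\mathcal{F}_i$ carry \emph{all} of the row-level randomness of $X$, so that what remains across distinct pairs is conditionally independent. This is where Kallenberg's proof needs a careful handling of the zero--one law for the tail field versus the exchangeable field, and a reverse martingale convergence argument specifically exploiting that rows indexed by disjoint sets are independent under dissociation. Everything else---the ergodic decomposition, the measurable selection, and the final coding by the quantile transform---is essentially bookkeeping given the Polish hypothesis on $\mathcal{X}$.
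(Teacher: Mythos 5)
First, a point of comparison: the paper offers no proof of this statement. Theorem \ref{thm:A-H} is imported verbatim as a classical result, with a citation to \cite{AldousExchangeability}, so there is no argument in the text to measure yours against; what follows is an assessment of your sketch on its own terms.

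Your first stage (ergodic decomposition onto dissociated laws, coded by $\zeta_{\emptyset}$, plus a measurable-selection step) is sound and standard. The second stage, however, breaks at exactly the step you flag as crucial. You propose to recover the vertex-level seeds $\zeta_{\{i\}}$ \emph{intrinsically}, as generators of the row tail $\sigma$-algebras $\mathcal{F}_i=\bigcap_n\sigma(X^{ij}:j\geq n)$, and then to prove that the edges are conditionally independent given $(\zeta_{\{i\}})_{i\geq1}$. This is false. Take $\zeta_1,\zeta_2,\ldots$ i.i.d.\ Uniform$[0,1]$ and set $X^{ij}:=\mathbf{1}\{\zeta_i+\zeta_j \bmod 1<1/2\}$ for $i\neq j$. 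This array is symmetric, exchangeable and dissociated, and each row is \emph{unconditionally} an i.i.d.\ Bernoulli$(1/2)$ sequence, because $\zeta_i+\zeta_j \bmod 1$ is uniform whatever the value of $\zeta_i$; hence every $\mathcal{F}_i$ is trivial by Kolmogorov's zero--one law. Conditional independence of the edges given the (trivial) $\mathcal{F}_i$ would force $X$ to be i.i.d.\ Bernoulli$(1/2)$ off the diagonal, but it is not: a Fourier computation gives $\mathbb{E}[X^{12}X^{23}X^{34}X^{41}]=\tfrac{1}{16}+\tfrac{1}{48}\neq\tfrac{1}{16}$. The underlying obstruction is that the latent variables in \eqref{eq:AH-rep} are not unique and in general are not measurable functions of $X$ at all, so no $\sigma$-algebra generated by the array itself can play the role you assign to $\mathcal{F}_i$; external randomization must be injected. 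This is precisely why the published proofs (Aldous's original argument; Kallenberg's treatment via a chain of coding and transfer lemmas) are as long as they are: the conditional-independence structure has to be built by hand, and ``another reverse martingale computation combined with dissociation'' does not supply it.
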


By taking $\mathcal{X}=\{0,1\}$, the Aldous--Hoover theorem provides a general construction for all exchangeable random graphs with countably many vertices.
We make repeated use of the representation in \eqref{eq:AH-rep} throughout our discussion, with particular emphasis on the especially nice Aldous--Hoover representation and other special properties of  the Erd\H{o}s--R\'enyi process.

\subsection{Erd\H{o}s--R\'enyi random graphs}\label{section:ER}
For fixed $p\in[0,1]$, the {\em Erd\H{o}s--R\'enyi measure} $\varepsilon_p$ on $\graphsN$ is defined as the distribution of $\Gamma\in\graphsN$ obtained by including each edge independently with probability $p$.
The Erd\H{o}s--R\'enyi measure is exchangeable for every $p\in[0,1]$.
We call $\Gamma\sim\varepsilon_p$ an {\em Erd\H{o}s--R\'enyi process} with parameter $p$.

%The Erd\H{o}s--R\'enyi process with $0<p<1$ is a special family of probability measures on $\graphsN$.
By including each edge independently with probability $p$, the finite-dimensional distributions of $\Gamma\sim\varepsilon_p$ satisfy
\begin{equation}\label{eq:ER-fidi}
\mathbb{P}\{\Gamma|_{[n]}=F\}=\varepsilon_p^{(n)}(F):=\prod_{1\leq i<j\leq n}p^{F^{ij}}(1-p)^{1-F^{ij}}>0,\quad F\in\graphsn,\quad n\in\Nb.\end{equation}
Thus, $\varepsilon_p$ has full support in the sense that it assigns positive measure to all open subsets of $\graphsN$.

Important to our proof of Theorems \ref{thm:discrete char}, \ref{thm:Levy-Ito}, and \ref{thm:discrete-rep}, $\Gamma\sim\varepsilon_p$ admits an Aldous--Hoover representation $\Gamma\equalinlaw(g_0(\zeta_{\emptyset},\zeta_{\{i\}},\zeta_{\{j\}},\zeta_{\{i,j\}}))_{i,j\geq1}$, where
\begin{equation}\label{eq:nice rep}
g_0(\zeta_{\emptyset},\zeta_{\{i\}},\zeta_{\{j\}},\zeta_{\{i,j\}})=g'_0(\zeta_{\{i,j\}}):=\left\{\begin{array}{cc}
1,& 0\leq \zeta_{\{i,j\}}\leq p,\\
0,& \text{otherwise.}
\end{array}\right.\end{equation}

%Even stronger than the above properties, the Erd\H{o}s--R\'enyi process produces a countable graph that is almost surely
%\begin{itemize}
%	\item[(U1)] {\em universal}, i.e., for every $m\in\mathbb{N}$ and $F\in\graphsm$, there exists an injection $\phi:[m]\rightarrow\mathbb{N}$ such that $G^{\phi}=F$, and 
%	\item[(U2)] {\em ultrahomogeneous}, i.e., for every $F\in\graphsn$ and injection $\phi:[m]\rightarrow\mathbb{N}$ for which $G^{\phi}=F|_{[m]}$, $\phi$ extends to an injection $\phi':[n]\rightarrow\Nb$ such that $G^{\phi'}=F$.
%\end{itemize}
%The {\em Rado graph} $\mathscr{R}$ is the unique (up to isomorphism) countable universal and ultrahomogeneous graph \cite{Rado1964} and, thus, $\varepsilon_{p}$-almost every $G\in\graphsN$ is isomorphic to $\mathscr{R}$.
%The importance of these properties will become apparent in Section \ref{section:one process} when we use (U1) and (U2) to embed any $\graphsN$-valued exchangeable Feller process $\Gbf$ into a single family $(\Gamma_t)_{t\in T}$.
%This aspect of Erd\H{o}s--R\'enyi graphs is also central to the description of exchangeable Feller processes $\Gbf$ in Theorems \ref{thm:discrete char}, \ref{thm:Poisson}, and \ref{thm:Levy-Ito}.

\subsection{Feller processes}\label{section:graph-valued processes}

Let $\mathbf{Z}:=\{\mathbf{Z}_z:\,z\in\mathcal{Z}\}$ be a Markov process in any Polish space $\mathcal{Z}$.
The {\em semigroup} $(\mathbf{P}_t)_{t\in T}$ of $\mathbf{Z}$ acts on bounded, measurable functions $g:\mathcal{Z}\rightarrow\mathbb{R}$ by
\[\mathbf{P}_tg(z):=\mathbb{E}(g(Z_t)\mid \Gamma_0=z),\quad z\in\mathcal{Z}.\]
We say that $\mathbf{Z}$ possesses the {\em Feller property} if for all bounded, continuous $g:\mathcal{Z}\rightarrow\mathbb{R}$
\begin{itemize}
	\item[(i)] $z\mapsto\mathbf{P}_tg(z)$ is continuous for every $t>0$ and
	\item[(ii)] $\lim_{t\downarrow0}\mathbf{P}_tg(z)=g(z)$ for all $z\in\mathcal{Z}$.
\end{itemize}

\begin{prop}\label{prop:Feller equiv}
The following are equivalent for any exchangeable Markov process $\Gbf=\{\Gbf_{G}:\,G\in\graphsN\}$ on $\graphsN$.
\begin{itemize}
	\item[(i)] $\mathbf{\Gamma}$ is consistent.
	\item[(ii)] $\mathbf{\Gamma}$ has the Feller property.
\end{itemize}
\end{prop}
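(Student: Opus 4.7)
The plan is to test each implication against the dense family of cylinder indicators $g_{n,F}(G):=\mathbf{1}\{G|_{[n]}=F\}$, $F\in\graphsn$, $n\geq1$, which are continuous on the compact Polish space $\graphsN$ and, by Stone--Weierstrass (or uniform continuity on the compact space), have uniformly dense linear span in $C(\graphsN)$. The key observation is that $\mathbf{P}_tg_{n,F}(G)=\mathbb{P}\{\Gamma_t|_{[n]}=F\mid\Gamma_0=G\}$ is precisely the object appearing in the definition \eqref{eq:consistent} of consistency, so Feller statements about cylinder indicators correspond directly to consistency statements.

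For (i)$\Rightarrow$(ii), assume consistency. Then $\mathbf{P}_tg_{n,F}$ factors through the locally constant map $G\mapsto G|_{[n]}$, hence is itself locally constant and in particular continuous; by density this extends to continuity of $\mathbf{P}_tg$ for every $g\in C(\graphsN)$, giving Feller condition (i). The $t\downarrow 0$ condition (in continuous time) reduces to right-continuity at $0$ of the semigroup of the finite-state Markov chain $(\Gamma_t|_{[n]})_{t\geq 0}$ which consistency produces; this I take to follow from the tacit right-continuity-in-probability built into the notion of a Markov process. In discrete time the $t\downarrow 0$ condition is vacuous.

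For (ii)$\Rightarrow$(i), assume Feller, fix $n,F,t$, and set $h(G):=\mathbf{P}_tg_{n,F}(G)$, so $h\in C(\graphsN)$ by hypothesis. Applying exchangeability \eqref{eq:exch tps} to $A:=\{H:H|_{[n]}=F\}$ and any $\sigma\in\symmetricN$ fixing $[n]$ pointwise, one has $A^{\sigma}=A$, hence $h(G^{\sigma})=h(G)$. The task is to upgrade this invariance to constancy of $h$ on each cylinder $C_{F_0}:=\{G:G|_{[n]}=F_0\}$, $F_0\in\graphsn$, which is precisely consistency.

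The obstacle is that the orbits of $\{\sigma\in\symmetricN:\sigma|_{[n]}=\mathrm{id}\}$ are not dense in $C_{F_0}$: for example, a graph with no edges outside $[n]$ and one of positive edge density both lie in $C_{F_0}$ but in distinct closed orbits. I circumvent this using the Erd\H{o}s--R\'enyi measure from Section \ref{section:ER}: for any $p\in(0,1)$, the conditional law $\varepsilon_p(\cdot\mid G|_{[n]}=F_0)$ has full support on $C_{F_0}$ by \eqref{eq:ER-fidi}, and a standard back-and-forth / Rado-graph construction shows that under its self-product two independent samples $G_1,G_2\in C_{F_0}$ are almost surely related by a permutation of $\Nb$ fixing $[n]$ pointwise (the identity on $[n]$ is already a partial isomorphism between $G_1|_{[n]}$ and $G_2|_{[n]}$, and the $\varepsilon_p$-a.s.\ Rado extension property extends it to a total isomorphism). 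Combined with $h(G^{\sigma})=h(G)$, this forces $h$ to be $\varepsilon_p|_{C_{F_0}}$-almost-surely equal to a single constant, and continuity of $h$ together with full support then give $h\equiv\text{const}$ on all of $C_{F_0}$. Letting $n,F,t$ vary yields consistency. The substantive ingredient is the Rado-graph/back-and-forth step; the remainder is topological bookkeeping.
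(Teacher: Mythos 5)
Your proposal is correct and follows essentially the same route as the paper: for (i)$\Rightarrow$(ii) both use Stone--Weierstrass density of cylinder functions plus the finite-state restricted chains, and for (ii)$\Rightarrow$(i) both combine exchangeability-invariance of $\mathbf{P}_t\psi_F$ under permutations fixing $[n]$ with the Rado-type genericity of an Erd\H{o}s--R\'enyi sample conditioned on the cylinder, finishing by continuity. The only cosmetic difference is that the paper phrases the genericity as ``the orbit of a single conditioned $\varepsilon_{1/2}$ sample is dense in the cylinder,'' whereas you phrase it as ``two independent conditioned samples are a.s.\ in the same orbit'' plus full support of $\varepsilon_p$ --- these are interchangeable formulations of the same step.
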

\begin{proof}

\noindent{\bf (i)$\Rightarrow$(ii)}: Compactness of the topology induced by \eqref{eq:metric} and the Stone--Weierstrass theorem imply that 
\[\{g:\graphsN\rightarrow\mathbb{R}:\,\exists n\in\Nb\text{ such that }G|_{[n]}=G'|_{[n]}\text{ implies }g(G)=g(G')\}\]
is dense in the space of continuous functions $\graphsN\rightarrow\mathbb{R}$.
Therefore, every bounded, continuous $g:\graphsN\rightarrow\Nb$ can be approximated to arbitrary accuracy by a function that depends on $G\in\graphsN$ only through $G|_{[n]}$, for some $n\in\Nb$.
For any such approximation, the conditional expectation in the definition of the Feller property depends only on the restriction of $\mathbf{\Gamma}$ to $\graphsn$, which is a Markov chain.
The first point in the Feller property follows immediately and the second follows soon after by noting that $\graphsn$ is a finite state space and, therefore, $\mathbf{\Gamma}^{[n]}$ must have a strictly positive hold time in its initial state with probability one.

\vspace{2mm}

\noindent{\bf (ii)$\Rightarrow$(i)}: 
For fixed $n\in\mathbb{N}$ and $F\in\graphsn$, we define $\psi_{F}:\graphsN\rightarrow\{0,1\}$ by
\[\psi_{F}(G):=\mathbf{1}\{G|_{[n]}=F\},\quad G\in\graphsN,\]
which is bounded and continuous on $\graphsN$.
To establish consistency, we must prove that
\[\mathbb{P}\{\Gamma_t|_{[n]}=F\mid \Gamma_0=G\}=\mathbb{P}\{\Gamma_t|_{[n]}=F\mid \Gamma_0|_{[n]}=G|_{[n]}\}\]
for every $t\geq0$, $G\in\graphsN$, and $F\in\graphsn$, for all $n\in\Nb$, which amounts to showing that 
\begin{equation}\label{eq:equality}
\mathbf{P}_t\psi_{F}(G)=\mathbf{P}_t\psi_{F}(G^*)\end{equation}
 for all $G,G^*\in\graphsN$ for which $G|_{[n]}=G^*|_{[n]}$.

By part (i) of the Feller property, $\mathbf{P}_t\psi_{F}(\cdot)$ is continuous and, therefore, it is enough to establish \eqref{eq:equality} on a dense subset of $\{G\in\graphsN:\,G|_{[n]}=F\}$.
Exchangeability implies that $\mathbf{P}_t\psi_{F}(G)=\mathbf{P}_t\psi_{F}(G^{\sigma})$ for all $\sigma\in\symmetricN$ that coincide with the identity $[n]\rightarrow[n]$.
To establish \eqref{eq:equality}, we identify $G^{*}\in\graphsN$ such that $G^*|_{[n]}=F$ and $\{G^{*\sigma}:\,\sigma\text{ coincides with the identity }[n]\rightarrow[n]\}$ is dense in $\{G\in\graphsN:\,G|_{[n]}=F\}$.

%Let $\mathscr{R}$ denote the {\em Rado graph} from Section \ref{section:ER}.
%By universality, there exists an injection $\phi:[n]\rightarrow\Nb$ such that $\mathscr{R}^{\phi}=F$.
%By ultrahomogeneity, there exists an extension $\bar{\phi}:\Nb\rightarrow\Nb$ of $\phi$ such that $\mathcal{R}^{\bar{\phi}}$ is isomorphic to $\mathscr{R}$.
%We define $G^{*}:=\mathscr{R}^{\bar{\phi}}$.
For this, we draw $G^*\sim\varepsilon_{1/2}$ conditional on the event $\{G^*|_{[n]}=F\}$.
Now, for every $F'\in\mathcal{G}_{[n']}$, $n'\geq n$, such that $F'|_{[n]}=F$, Kolmogorov's zero-one law guarantees a permutation $\sigma:\Nb\rightarrow\Nb$ that fixes $[n]$ and for which $G^{*\sigma}|_{[n']}=F'$.
Therefore, 
\[\{G^{*\sigma}:\,\sigma\in\symmetricN,\,\sigma\text{ coincides with identity }[n]\rightarrow[n]\}\]
is dense in $\{G\in\graphsN:\,G|_{[n]}=F\}$.
By the invariance of $\mathbf{P}_t\psi_{F}(\cdot)$ with respect to permutations that fix $[n]$, $\mathbf{P}_t\psi_{F}$ is constant on a dense subset of $\{G\in\graphsN:\,G|_{[n]}=F\}$.
Continuity implies that $\mathbf{P}_t\psi_{F}(\cdot)$ must be constant on the closure of this set and, therefore, can depend only on the restriction to $\graphsn$.

By part (ii) of the Feller property, 
\[\mathbf{P}_t\psi_{F}(G^*)\rightarrow\psi_{F}(G^*)\quad\text{as }t\downarrow0.\]
It follows that each finite restriction $\mathbf{\Gamma}^{[n]}$ has c\`adl\`ag paths and is consistent.
\end{proof}

\subsection{More on graph limits}\label{section:more graph limits}

Recall Definition \ref{defn:dissociated graph} of dissociated random graphs.
In this context, $\limitdensities$ corresponds to the space of exchangeable, dissociated probability measures on $\graphsN$, so we often conflate the two notions and write
\[D(F):=\gamma_D(\{G\in\graphsN:\,G|_{[n]}=F\}),\quad F\in\graphsn,\quad n\in\Nb,\]
for each $D\in\limitdensities$.
The space of graph limits $\limitdensities$ corresponds to the set of exchangeable, dissociated probability measures on $\graphsN$ and is compact under metric
\begin{equation}\label{eq:TV}
\xnorm{D-D'}:=\sum_{n\in\Nb}2^{-n}\sum_{F\in\graphsn}|D(F)-D'(F)|,\quad D,D'\in\limitdensities.
\end{equation}
%where $F_1,F_2,\ldots$ is any fixed enumeration of $\mathcal{G}^*$.
We furnish $\limitdensities$ with the trace of the Borel $\sigma$-field on $[0,1]^{\bigcup_{m\in\Nb}\graphsm}$.

The Aldous--Hoover theorem (Theorem \ref{thm:A-H}) provides the link between dissociated measures and exchangeable random graphs.
In particular, dissociated graphs are {ergodic} with respect to the action of $\symmetricN$ in the space of exchangeable random graphs, i.e., the law of every exchangeable random graph is a mixture of dissociated random graphs, and to every exchangeable random graph $\Gamma$ there is a unique probability measure $\Delta$ so that $\Gamma\sim\gamma_{\Delta}$ as in \eqref{eq:Delta-mixture}.

\begin{example}[Graph limit of the Erd\H{o}s--R\'enyi process]\label{ex:ER-graph limit}
For fixed $p\in[0,1]$, let $(\varepsilon_p^{(n)})_{n\in\mathbb{N}}$ be the collection of finite-dimensional Erd\H{o}s--R\'enyi measures in \eqref{eq:ER-fidi}.
The graph limit $|\Gamma|$ of $\Gamma\sim\varepsilon_p$ is deterministic  and satisfies $\delta(F,\Gamma)=\varepsilon_p^{(n)}$ a.s.\ for every $F\in\graphsn$, $n\in\Nb$.
\end{example}

\begin{example}[Random graph limit]\label{ex:random graph limit}
In general, the graph limit of an exchangeable random graph $\Gamma$ is a random element of $\limitdensities$.  
For example, let $\varepsilon_{\alpha,\beta}$ be the mixture of Erd\H{o}s--R\'enyi measures by the Beta distribution with parameter $(\alpha,\beta)$, i.e.,
\[\varepsilon_{\alpha,\beta}(\cdot):=\int_{[0,1]}\varepsilon_p(\cdot)\mathscr{B}_{\alpha,\beta}(dp),\]
where $\mathscr{B}_{\alpha,\beta}$ denotes the Beta distribution with parameter $(\alpha,\beta)$.
  The finite-dimensional distributions of $\Gamma\sim\varepsilon_{\alpha,\beta}$ are
\begin{equation}\label{eq:ER-mixture}
\mathbb{P}\{\Gamma|_{[n]}=F\}=\varepsilon_{\alpha,\beta}^{(n)}(F):=\frac{\alpha^{\uparrow n_1}\beta^{\uparrow n_0}}{(\alpha+\beta)^{\uparrow {n\choose 2}}},\quad F\in\graphsn,\quad n\in\Nb,\end{equation}
where $n_1:=\sum_{1\leq i<j\leq n}\mathbf{1}\{ij\in F\}$, $n_0:=\sum_{1\leq i<j\leq n}\mathbf{1}\{ij\notin F\}$, and $\alpha^{\uparrow j}:=\alpha(\alpha+1)\cdots(\alpha+j-1)$.  In this case, $\Gamma\sim\varepsilon_{\alpha,\beta}$ projects to a random element $|\Gamma|$ of $\limitdensities$, where $|\Gamma|$ is distributed over the graph limits of $\varepsilon_p$-distributed random graphs for $p$ following the Beta distribution with parameter $(\alpha,\beta)$.
\end{example}

\begin{rmk}[Connection to Lov\'asz--Szegedy graph limit theory]
Lov\'asz and Szegedy \cite{LovaszSzegedy2006} define a graph limit in terms of a measurable function $\phi:[0,1]\times[0,1]\rightarrow[0,1]$, called a {\em graphon}.
To see the connection to our definition as an exchangeable, dissociated probability measure, we can generate a random graph $\Gamma$ by first taking $U_1,U_2,\ldots$ i.i.d.\ Uniform random variables on $[0,1]$.  Conditional on $U_1,U_2,\ldots$, we generate each $\Gamma^{ij}$ independently according to
\[\mathbb{P}\{\Gamma^{ij}=1\mid U_1,U_2,\ldots\}=\phi(U_i,U_j),\quad i<j.\]
The distribution of $\Gamma$ corresponds to $\gamma_D$ for some $D\in\limitdensities$.

Some comments about the Lov\'asz--Szegedy notion of {graph limit}.
\begin{itemize}
	\item[(1)]  For a given sequence of graphs $(G_n)_{n\in\Nb}$, Lov\'asz and Szegedy's graphon (\cite{LovaszSzegedy2006}, Section 1) is not unique.
%; thus, their construal of a graphon as a {\em limit} is a misnomer. 
In contrast, our interpretation of a graph limit as a probability measure is essentially unique (up to sets of measure zero).
	\item[(2)] Lov\'asz and Szegedy's definition of graphon is a  recasting of the Aldous--Hoover theorem in the special case of countable graphs; see Theorem \ref{thm:A-H} above.
	\item[(3)] Our definition of a countable graph $G\in\graphsN$ as the {\em projective limit} of a compatible sequence of finite graphs $(G_n)_{n\in\Nb}$ should not be confused with a {\em graph limit}.  Any $G\in\graphsN$ corresponds to a unique compatible sequence $(G_1,G_2,\ldots)$ of finite graphs, whereas a graph limit $D\in\limitdensities$ corresponds to the measurable set of countable graphs $|D|^{-1}:=\{G\in\graphsN:|G|=D\}$. 
\end{itemize}
\end{rmk}

\subsection{Rewiring maps and their limits}\label{section:wiring maps}

As defined in Section \ref{section:discrete-intro}, a {rewiring map} $W$ is a symmetric $\{0,1\}\times\{0,1\}$-valued array $(W^{ij})_{i,j\geq1}$ with $(0,0)$ along the diagonal.
We often regard $W$ as a map $\graphsN\rightarrow\graphsN$, a pair $(W_0,W_1)\in\graphsN\times\graphsN$, or a  $\{0,1\}\times\{0,1\}$-valued array, without explicit specification.

Given any $W\in\wiren$ and any injection $\phi:[m]\rightarrow[n]$, we define $W^{\phi}:=(W^{\phi(i)\phi(j)})_{1\leq i,j\leq m}$.  
In particular,  $W|_{[n]}:=(W_0|_{[n]},W_1|_{[n]})$ denotes the restriction of $W$ to a rewiring map $\graphsn\rightarrow\graphsn$ and, for any $\sigma\in\symmetricN$, $W^{\sigma}$ is the image of $W$ under relabeling by $\sigma$.
We equip $\wireN$ with the product-discrete topology induced by
\[d(W,W'):=1/\max\{n\in\mathbb{N}:W|_{[n]}=W'|_{[n]}\},\quad W,W'\in\wireN,\]
and the associated Borel $\sigma$-field $\sigma\langle\,\cdot|_{[n]}\rangle_{n\in\Nb}$ generated by restriction maps.

\begin{defn}[Exchangeable rewiring maps]
A random rewiring map $W\in\wireN$ is {\em exchangeable} if $W\equalinlaw W^{\sigma}$ for all $\sigma\in\symmetricN$.
A measure $\omega$ on $\wireN$ is {\em exchangeable} if 
\[\omega(S)=\omega(S^{\sigma})\quad\text{for all }\sigma\in\symmetricN,\]
for every measurable subset $S\subseteq\wireN$, where $S^{\sigma}:=\{W^{\sigma}:W\in S\}$.  In particular, a probability measure $\omega$ on $\wireN$ is exchangeable if it determines the law of an exchangeable random rewiring map.
\end{defn}

Appealing to the notion of graph limits, we define the {\em density of $V\in\mathcal{W}_{[m]}$ in $W\in\wiren$}  by
\[\delta(V,W):={\ind(V,W)}/{n^{\downarrow m}},\]
where, as in \eqref{eq:induced density}, $\ind(V,W)$ is the number of injections $\phi:[m]\rightarrow[n]$ for which $W^{\phi}=V$.  
We define the {\em limiting density of $V\in\mathcal{W}_{[m]}$ in $W\in\wireN$} by
\[\delta(V,W):=\lim_{n\rightarrow\infty}\delta(V,W|_{[n]}),\quad\text{if it exists}.\]

\begin{defn}[Rewiring limits]
The {\em rewiring limit} of $W\in\wireN$ is the collection 
\[|W|:=(\delta(V,W))_{V\in\bigcup_{m\in\Nb}\mathcal{W}_{[m]}},\]
provided $\delta(V,W)$ exists for every $V\in\bigcup_{m\in\Nb}\mathcal{W}_{[m]}$.
If $\delta(V,W)$ does not exist for some $V$, then we put $|W|:=\partial$.
We write $\limitwiredensities$ to denote the closure of $\{|W|:\,W\in\wireN\}\setminus\{\partial\}$ in $[0,1]^{\bigcup_{m\in\Nb}\wirem}$.
\end{defn}

Following the program of Section \ref{section:graph limits}, every $\upsilon\in\limitwiredensities$ determines an exchangeable probability measure $\Omega_{\upsilon}$ on $\wireN$.
We call $\Omega_{\upsilon}$ the {\em rewiring measure directed by $\upsilon$}, which is the essentially unique exchangeable measure on $\wireN$ such that $\Omega_{\upsilon}$-almost every $W\in\wireN$ has $|W|=\upsilon$.  
Given any measure $\Upsilon$ on $\limitwiredensities$, we define the mixture of rewiring measures by
\begin{equation}\label{eq:rewiring measure}
\Omega_{\Upsilon}(\cdot):=\int_{\limitwiredensities}\Omega_{\upsilon}(\cdot)\Upsilon(d\upsilon).\end{equation}

The space of rewiring limits $\limitwiredensities$ corresponds to the closure of exchangeable, dissociated probability measures on $\wireN$ and, therefore, is compact under the analogous metric to \eqref{eq:TV}, 
\[\xnorm{\upsilon-\upsilon'}:=\sum_{n\in\Nb}2^{-n}\sum_{V\in\wiren}|\upsilon(V)-\upsilon'(V)|,\quad\upsilon,\upsilon'\in\limitwiredensities.\]
%for any fixed enumeration $V_1,V_2,\ldots$ of $\mathcal{W}^*$.
As for the space of graph limits,  we furnish $\limitwiredensities$ with the trace Borel $\sigma$-field of $[0,1]^{\bigcup_{m\in\Nb}\mathcal{W}_{[m]}}$.

\begin{lemma}\label{lemma:existence limit density}
The following hold for general rewiring maps.
\begin{itemize}
	\item[(i)]  As a map $\graphsN\rightarrow\graphsN$, every $W\in\wireN$ is Lipschitz continuous with Lipschitz constant 1 and the rewiring operation is associative in the sense that
\[(W\circ W')\circ W''=W\circ(W'\circ W''),\quad\text{for all } W,W',W''\in\wireN.\]
	\item[(ii)] $(W(G))^{\sigma}=W^{\sigma}(G^{\sigma})$ for all $W\in\wireN$, $G\in\graphsN$, and $\sigma\in\symmetricN$.
	\item[(iii)] If $W\in\wireN$ is an exchangeable rewiring map, then $|W|$ exists almost surely.  Moreover, for every $\upsilon\in\limitwiredensities$, $\Omega_{\upsilon}$-almost every $W\in\wireN$ has $|W|=\upsilon$.
\end{itemize}
\end{lemma}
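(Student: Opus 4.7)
For part (i), the plan is to unpack the definition in \eqref{eq:wire G}: the entry $W(G)^{ij}$ depends only on $G^{ij}$ and $W^{ij}$ (selecting $W_0^{ij}$ or $W_1^{ij}$ accordingly), so rewiring is an entrywise operation. Therefore if $G|_{[n]}=G'|_{[n]}$ then automatically $W(G)|_{[n]}=W(G')|_{[n]}$, i.e.\ $\max\{n:W(G)|_{[n]}=W(G')|_{[n]}\}\geq\max\{n:G|_{[n]}=G'|_{[n]}\}$, which by \eqref{eq:metric} gives $d(W(G),W(G'))\leq d(G,G')$. Associativity is immediate from the fact that composition of any functions $\graphsN\to\graphsN$ is associative, once one notes that $W\in\wireN$ really is such a function.

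For part (ii), the argument is a one-line entrywise check. By \eqref{eq:relabeling} and \eqref{eq:wire G}, $(W(G))^{\sigma,ij}=(W(G))^{\sigma(i)\sigma(j)}$ equals $W_0^{\sigma(i)\sigma(j)}$ if $G^{\sigma(i)\sigma(j)}=0$ and $W_1^{\sigma(i)\sigma(j)}$ if $G^{\sigma(i)\sigma(j)}=1$. On the other hand, $W^\sigma(G^\sigma)^{ij}$ is defined from $(W^\sigma)^{ij}=W^{\sigma(i)\sigma(j)}$ and $(G^\sigma)^{ij}=G^{\sigma(i)\sigma(j)}$ using the same two-case rule, so the two expressions coincide.

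For part (iii), I would invoke the Aldous--Hoover theorem (Theorem~\ref{thm:A-H}) applied to the $\{0,1\}\times\{0,1\}$-valued exchangeable array $W=(W^{ij})_{i,j\in\Nb}$. Aldous--Hoover provides a representation $W\equalinlaw(f(\zeta_\emptyset,\zeta_{\{i\}},\zeta_{\{j\}},\zeta_{\{i,j\}}))_{i,j\geq1}$. Fix $V\in\wirem$; then
\[
\delta(V,W|_{[n]})=\frac{1}{n^{\downarrow m}}\sum_{\phi:[m]\hookrightarrow[n]}\mathbf{1}\{W^\phi=V\}
\]
is an average over injections of a bounded measurable statistic of the underlying i.i.d.\ uniforms. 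The ergodic theorem for exchangeable arrays (a consequence of the Aldous--Hoover representation via the reverse martingale / U-statistic convergence argument) shows that, conditional on $\zeta_\emptyset$, these averages converge almost surely to
\[
\delta(V,W)=\int_{[0,1]^{1+m+\binom{m}{2}}}\mathbf{1}\{(f(\zeta_\emptyset,x_i,x_j,y_{ij}))_{1\leq i,j\leq m}=V\}\prod dx_i\prod dy_{ij},
\]
so $|W|$ exists almost surely. For the second assertion, $\Omega_\upsilon$ is by construction the essentially unique exchangeable, \emph{dissociated} probability measure on $\wireN$ whose marginals on each $\wirem$ record the densities $\upsilon(V)$; under any dissociated law the conditioning on $\zeta_\emptyset$ becomes trivial and the limits $\delta(V,W)$ coincide deterministically with $\upsilon(V)$ for all $V$, hence $|W|=\upsilon$ almost surely.

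The routine content is parts (i) and (ii); the main obstacle in (iii) is justifying the simultaneous almost-sure convergence of $\delta(V,W|_{[n]})$ across \emph{all} finite rewiring maps $V$, but this follows by a countable union since $\bigcup_m\wirem$ is countable, and identifying the limit as $\upsilon(V)$ under $\Omega_\upsilon$ reduces, via the Aldous--Hoover representation, to the strong law of large numbers for the i.i.d.\ uniforms driving the representation.
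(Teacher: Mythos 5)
Your proposal is correct and follows essentially the same route as the paper, which simply declares (i) and (ii) immediate from the definition of rewiring and derives (iii) as a modification of the almost-sure existence of graph limits for exchangeable random graphs via the Aldous--Hoover theorem; your added detail (the entrywise Lipschitz check, the U-statistic/reverse-martingale convergence, the countable union over $V\in\bigcup_m\wirem$, and the dissociated-law identification of the limit under $\Omega_\upsilon$) is exactly the intended elaboration. The only blemish is cosmetic: in your displayed integral the domain should be $[0,1]^{m+\binom{m}{2}}$ with $\zeta_\emptyset$ held fixed (or else include $d\zeta_\emptyset$ in the product), but this does not affect the argument.
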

\begin{proof}
Parts (i) and (ii) are immediate from the definition of rewiring.
Part (iii) follows from a straightforward modification of the almost sure existence of $|\Gamma|$ for any exchangeable random graph $\Gamma$, cf.\ Theorem \ref{thm:A-H}.
\end{proof}

\section{Discrete-time chains and the rewiring measure}\label{section:discrete}

\subsection{Conditional Aldous--Hoover theorem for graphs}

\begin{thm}[Conditional Aldous--Hoover theorem for graphs]\label{thm:discrete-rep}
 Let $\mathbf{\Gamma}$ be an exchangeable Feller chain on $\graphsN$ whose transitions are governed by transition probability measure $P$.  
Then there exists a measurable function $f:[0,1]^4\times\{0,1\}\rightarrow\{0,1\}$ satisfying $f(\cdot, b,c,\cdot,\cdot)=f(\cdot,c,b,\cdot,\cdot)$ such that, for every fixed $G\in\graphsN$, the distribution  of $\Gamma'\sim P(G,\cdot)$ is identical to that of $\Gamma'^*:=(\Gamma'^{*ij})_{i,j\geq1}$, where
\begin{equation}\label{eq:undirected WE rep}\Gamma'^{*ij}=f(\zeta_{\emptyset},\zeta_{\{i\}},\zeta_{\{j\}},\zeta_{\{i,j\}},G^{ij}),\quad i,j\geq1,\end{equation}
for $\{\zeta_{\emptyset};(\zeta_{\{i\}})_{i\geq1};(\zeta_{\{i,j\}})_{j>i\geq1}\}$ i.i.d.\ Uniform random variables on $[0,1]$.
\end{thm}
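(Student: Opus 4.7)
The plan is to reduce the theorem to the standard Aldous--Hoover theorem (Theorem \ref{thm:A-H}) by using the Erd\H{o}s--R\'enyi measure $\varepsilon_{1/2}$ as an auxiliary full-support initial distribution, then exploiting the special AH representation \eqref{eq:nice rep} of $\varepsilon_{1/2}$ to peel the previous-state dependence out of the joint representation.

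First, I would take $\Gamma_0\sim\varepsilon_{1/2}$ and, given $\Gamma_0$, draw $\Gamma_1\sim P(\Gamma_0,\cdot)$. Exchangeability of $\varepsilon_{1/2}$ together with the exchangeability of $P$ implies that the symmetric $\{0,1\}^2$-valued array $X^{ij}:=(\Gamma_0^{ij},\Gamma_1^{ij})$ is jointly exchangeable. Applying Theorem \ref{thm:A-H} with $\mathcal{X}=\{0,1\}^2$ yields a measurable $h=(h_0,h_1):[0,1]^4\rightarrow\{0,1\}^2$, symmetric in its middle arguments, and i.i.d.\ Uniform$[0,1]$ variables $\{\zeta_{\emptyset};(\zeta_{\{i\}});(\zeta_{\{i,j\}})\}$ such that $X^{ij}\equalinlaw h(\zeta_{\emptyset},\zeta_{\{i\}},\zeta_{\{j\}},\zeta_{\{i,j\}})$.

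The main obstacle is replacing $h$ by an equivalent representation in which $h_0$ depends only on $\zeta_{\{i,j\}}$ and agrees with the form $g_0'(\zeta_{\{i,j\}})=\mathbf{1}\{\zeta_{\{i,j\}}\leq 1/2\}$ of \eqref{eq:nice rep}. Because the $\Gamma_0$-marginal is i.i.d.\ Bernoulli$(1/2)$, a short calculation using independence of edges sharing a vertex forces, for Lebesgue-a.e.\ $(a,b,c)\in[0,1]^3$, the cross-section $\{u:h_0(a,b,c,u)=1\}$ to have measure exactly $1/2$. I would then construct a jointly measurable family of measure-preserving rearrangements $\sigma_{a,b,c}:[0,1]\rightarrow[0,1]$ (for example, increasing rearrangements, whose measurability is standard) carrying this cross-section to $[0,1/2]$, symmetric in $b,c$ because $h_0$ is, and set $\tilde h(a,b,c,u):=h(a,b,c,\sigma_{a,b,c}^{-1}(u))$. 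The joint law of $(\tilde h(\zeta_{\emptyset},\zeta_{\{i\}},\zeta_{\{j\}},\zeta_{\{i,j\}}))_{i,j}$ coincides with that of $X$, and by construction $\tilde h_0(a,b,c,u)=g_0'(u)$.

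Finally, for $g\in\{0,1\}$ let $\phi_1(u):=u/2$ and $\phi_0(u):=(u+1)/2$ be the affine measure-preserving bijections onto $[0,1/2]$ and $[1/2,1]$ respectively, and define
\[
f(a,b,c,u,g):=\tilde h_1(a,b,c,\phi_g(u)),\qquad (a,b,c,u)\in[0,1]^4,\ g\in\{0,1\}.
\]
Symmetry of $f$ in $(b,c)$ is inherited from $\tilde h_1$. Conditional on $\Gamma_0^{ij}=g_0'(\zeta_{\{i,j\}})=g$, the variable $\zeta_{\{i,j\}}$ is Uniform on $[0,1/2]$ (if $g=1$) or on $[1/2,1]$ (if $g=0$), so rescaling via $\phi_{\Gamma_0^{ij}}^{-1}$ produces fresh i.i.d.\ Uniform$[0,1]$ noise that, together with $\Gamma_0^{ij}$, drives $\Gamma_1^{ij}$ through $f$. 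This gives \eqref{eq:undirected WE rep} for $\varepsilon_{1/2}$-almost every $G\in\graphsN$. Extending to every $G$ is immediate from consistency (Proposition \ref{prop:Feller equiv}), which makes $P(G,\cdot)|_{[n]}$ a function of $G|_{[n]}$ only, combined with \eqref{eq:ER-fidi}, which guarantees that $\varepsilon_{1/2}$ charges every cylinder $\{G:G|_{[n]}=F\}$ with positive mass; the finite-dimensional identity of the two kernels thus propagates from $\varepsilon_{1/2}$-a.e.\ $G$ to every $G\in\graphsN$.
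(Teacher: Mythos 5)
Your overall architecture parallels the paper's: couple an $\varepsilon_{1/2}$-distributed graph with one step of the chain, apply Theorem \ref{thm:A-H} to the jointly exchangeable $\{0,1\}\times\{0,1\}$-valued array, force the first coordinate of the representing function into the canonical form \eqref{eq:nice rep}, and then condition on the first coordinate to read off \eqref{eq:undirected WE rep}; your extension from $\varepsilon_{1/2}$-almost every $G$ to all $G$ via consistency plus the positivity of $\varepsilon_{1/2}$ on cylinders \eqref{eq:ER-fidi} is a clean substitute for the paper's appeal to continuity of $P(\cdot,\cdot)$ in its first argument. Where you genuinely diverge is the middle step: the paper invokes Kallenberg's Theorem 7.28 together with two applications of the Coding Lemma, whereas you propose an explicit rearrangement of the last coordinate. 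That route is more elementary and can be made to work, but it stands or falls on the claim that the cross-sections $A_{a,b,c}:=\{u:h_0(a,b,c,u)=1\}$ have Lebesgue measure exactly $1/2$ for almost every $(a,b,c)$; without this \emph{pointwise} statement no rearrangement of the $u$-coordinate alone can produce $\tilde h_0=g_0'$.

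That claim is true, but it is not delivered by the computation you cite. Writing $q(a,b,c):=\mathrm{Leb}(A_{a,b,c})$, independence of two edges sharing a vertex gives only $\mathbb{E}\bigl[\bar q(\zeta_{\emptyset},\zeta_{\{1\}})^2\bigr]=1/4$ for $\bar q(a,b):=\int_0^1q(a,b,c)\,dc$, which together with $\mathbb{E}[\bar q]=1/2$ forces $\bar q\equiv1/2$ a.e.; this pins down only the $c$-average of $q$, not its value. A kernel such as $q(a,b,c)=\{b+c\}$ (fractional part) has $\bar q\equiv 1/2$ and reproduces the edge, cherry, and even triangle densities of $\varepsilon_{1/2}$ while being nonconstant; it is ruled out only at the four-cycle. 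The needed input is precisely the $C_4$ density: with $T_a$ the integral operator with kernel $q(a,\cdot,\cdot)$, matching $\varepsilon_{1/2}$ forces $\mathbb{E}[\mathrm{tr}(T_a^4)]=1/16$, while pointwise $\mathrm{tr}(T_a^4)\geq\langle T_a\mathbf{1},\mathbf{1}\rangle^4=1/16$ with equality only if $T_a=\tfrac12\,\mathbf{1}\otimes\mathbf{1}$, i.e., $q(a,\cdot,\cdot)\equiv1/2$ a.e. (equivalently, appeal to uniqueness of the graphon representing the ergodic measure $\varepsilon_{1/2}$). Once this is supplied, the remainder of your argument---the measurable family of measure-preserving rearrangements, the conditional rescaling by $\phi_{G^{ij}}$, the inherited symmetry in $(b,c)$, and the propagation to every $G\in\graphsN$---goes through.
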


The representation in \eqref{eq:undirected WE rep} requires a delicate argument.
By the implication (ii)$\Rightarrow$(i) in Proposition \ref{prop:Feller equiv}, we can immediately deduce a weaker representation for $\Gamma'$ by $\Gamma'^*=(\Gamma'^{*ij})_{i,j\geq1}$ with
\[\Gamma'^{*ij}=f(\zeta_{\emptyset},\zeta_{\{i\}},\zeta_{\{j\}},\zeta_{\{i,j\}},G|_{[i\vee j]}),\quad i,j\geq1,\]
where $i\vee j:=\max(i,j)$.
The stronger representation in \eqref{eq:undirected WE rep} says that the conditional distribution of each entry of $\Gamma'^{*}$ depends only on the corresponding entry of $G$.

\begin{proof}[Proof of Theorem \ref{thm:discrete-rep}]
Let $\Gbf=\{\Gbf_G:\,G\in\graphsN\}$ be an exchangeable Feller chain on $\graphsN$ with transition probability measure $P$.
Let $\varepsilon_{1/2}$ denote the Erd\H{o}s--R\'enyi measure with $p=1/2$.
To prove \eqref{eq:undirected WE rep}, we generate a jointly exchangeable pair $(\Gamma,\Gamma')$, where 
\[\Gamma\sim\varepsilon_{1/2}\quad\text{and}\quad\Gamma'\mid\Gamma=G\sim P(G,\cdot).\]
By exchangeability of $\varepsilon_{1/2}$ and the transition probability measure $P$, $(\Gamma,\Gamma')$ is jointly exchangeable, i.e., $(\Gamma^{\sigma},\Gamma'^{\sigma})\equalinlaw(\Gamma,\Gamma')$ for all $\sigma\in\symmetricN$, and, therefore, determines a weakly exchangeable $\{0,1\}\times\{0,1\}$-valued array.
By the Aldous--Hoover theorem, there exists a measurable function $f:[0,1]^4\rightarrow\{0,1\}\times\{0,1\}$ such that $(\Gamma,\Gamma')\equalinlaw(\Gamma^*,\Gamma'^*)$, where
\[(\Gamma^*,\Gamma'^*)^{ij}=f(\zeta_{\emptyset},\zeta_{\{i\}},\zeta_{\{j\}},\zeta_{\{i,j\}}),\quad i,j\geq1,\]
for $\{\zeta_{\emptyset};(\zeta_{\{i\}})_{i\geq1};(\zeta_{\{i,j\}})_{j>i\geq1}\}$ i.i.d.\ Uniform$[0,1]$.
By separating components, we can write $f=(f_0,f_1)$ so that
\[(\Gamma^{*ij},\Gamma'^{*ij})=(f_0(\zeta_{\emptyset},\zeta_{\{i\}},\zeta_{\{j\}},\zeta_{\{i,j\}}),f_1(\zeta_{\emptyset},\zeta_{\{i\}},\zeta_{\{j\}},\zeta_{\{i,j\}})),\quad i,j\geq1.\]
With $g_0$ and $g_0'$ defined as in \eqref{eq:nice rep}, we have 
\[(g_0(\zeta_{\emptyset},\zeta_{\{i\}},\zeta_{\{j\}},\zeta_{\{i,j\}}))_{i,j\geq1}\sim\varepsilon_{1/2}\]
 and
\[(f_0(\zeta_{\emptyset},\zeta_{\{i\}},\zeta_{\{j\}},\zeta_{\{i,j\}}))_{i,j\geq1}\equalinlaw(g_0(\zeta_{\emptyset},\zeta_{\{i\}},\zeta_{\{j\}},\zeta_{\{i,j\}}))_{i,j\geq1}=(g_0'(\zeta_{\{i,j\}})_{i,j\geq1}.\]

Using Kallenberg's notation \cite{KallenbergSymmetries}, we write $\hat{\zeta}_J=(\zeta_I)_{I\subseteq J}$ for subsets $J\subset\Nb$, so that $\hat{\zeta}_{\{i,j\}}=(\zeta_{\emptyset},\zeta_{\{i\}},\zeta_{\{j\}},\zeta_{\{i,j\}})$, $i,j\geq1$, and $\hat{\zeta}_{\{i\}}=(\zeta_{\emptyset},\zeta_{\{i\}})$, $i\geq1$.
By Theorem 7.28 of \cite{KallenbergSymmetries}, there exists a measurable function $h:[0,1]^2\cup[0,1]^4\cup[0,1]^8\rightarrow[0,1]$ such that for $\{\eta_{\emptyset};(\eta_{\{i\}})_{i\geq1};(\eta_{\{i,j\}})_{j>i\geq1}\}$ i.i.d.\ Uniform$[0,1]$ random variables, $h(\hat{\zeta}_J,\hat{\eta}_J)\sim\text{Uniform}[0,1]$ and is independent of $\hat{\zeta}_J\setminus\{\zeta_J\}$ and $\hat{\eta}_J\setminus\{\eta_J\}$ for every $J\subseteq\Nb$ with two or fewer elements and
\[g_0'(\zeta_{\{i,j\}})=g_0(\hat{\zeta}_{\{i,j\}})=f_0(h(\zeta_{\emptyset},\eta_{\emptyset}),h(\hat{\zeta}_{\{i\}},\hat{\eta}_{\{i\}}),h(\hat{\zeta}_{\{j\}},\hat{\eta}_{\{j\}}),{h}(\hat{\zeta}_{\{i,j\}},\hat{\eta}_{\{i,j\}}))\quad\text{a.s.\ for every }i,j\geq1.\]

Now, put $\xi_{\emptyset}=h(\zeta_{\emptyset},\eta_{\emptyset})$, $\xi_{\{i\}}=h(\hat{\zeta}_{i},\hat{\eta}_{\{i\}})$, and $\xi_{\{i,j\}}=h(\hat{\zeta}_{ij},\hat{\eta}_{\{i,j\}})$ so that $\{\xi_{\emptyset};(\xi_{\{i\}})_{i\geq1};(\xi_{\{i,j\}})_{j>i\geq1}\}$ are i.i.d.\ Uniform$[0,1]$ random variables.
The generic Aldous--Hoover representation allows us to construct $(\Gamma^{*},\Gamma'^{*})$ by
\[(\Gamma^{*ij},\Gamma'^{*ij})=(f_0(\hat{\xi}_{\{i,j\}}),f_1(\hat{\xi}_{\{i,j\}})),\quad i,j\geq1.\]
From Kallenberg's Theorem 7.28, $(\Gamma^{*},\Gamma'^{*})$ also has the representation
\[(\Gamma^{*ij},\Gamma'^{*ij})=(f_0(\hat{\xi}_{\{i,j\}}),f_1(\hat{\xi}_{\{i,j\}}))=(g_0(\hat{\zeta}_{\{i,j\}}),g_1(\hat{\zeta}_{\{i,j\}},\hat{\eta}_{\{i,j\}})),\quad\text{a.s.},\quad i,j\geq1,\]
where $g_1:[0,1]^8\rightarrow\{0,1\}$ is the composition of $f_1$ with $h$.
Again by the Coding Lemma, we can represent $(\hat{\zeta}_{\{i,j\}},\hat{\eta}_{\{i,j\}})_{i,j\geq1}$ by
\[(\hat{\zeta}_{\{i,j\}},\hat{\eta}_{\{i,j\}})_{i,j\geq1}\equalinlaw (a(\hat{\alpha}_{\{i,j\}}))_{i,j\geq1},\]
where $\{\alpha_{\emptyset};(\alpha_{\{i\}})_{i\geq1};(\alpha_{\{i,j\}})_{j>i\geq1}\}$ are i.i.d.\ Uniform$[0,1]$.

It follows that $(\Gamma,\Gamma')$ possesses a joint representation by 
\begin{align}
\Gamma^{*ij}&=g_0'(\alpha_{\{i,j\}})\quad\text{and}\label{eq:iid}\\
\Gamma'^{*ij}&=g_1'(\alpha_{\emptyset},\alpha_{\{i\}},\alpha_{\{j\}},\alpha_{\{i,j\}}).\notag
\end{align}
%where 
%\[g_1(\zeta_{\emptyset},\zeta_{\{i\}},\zeta_{\{j\}},\zeta_{\{i,j\}})=f_1(\zeta_{\emptyset},\zeta_{\{i\}},\zeta_{\{j\}},h(\hat{\zeta}_{\{i,j\}},\hat{\eta}_{\{i,j\}})).\]
By the Coding Lemma, we can represent
\[(g_0'(\alpha_{\{i,j\}}),\alpha_{\{i,j\}})\equalinlaw(g_0'(\alpha_{\{i,j\}}),u(g_0'(\alpha_{\{i,j\}}),\chi_{\{i,j\}}))\]
for $(\chi_{\{i,j\}})_{j>i\geq1}$ i.i.d.\ Uniform$[0,1]$ random variables independent of $(\alpha_{\{i,j\}})_{j>i\geq1}$.
%For fixed $G\in\graphsN$, we can represent $\zeta_{\{i,j\}}=u(G^{ij},\chi_{\{i,j\}})$, for $\chi_{\{i,j\}}$ i.i.d.\ Uniform$[0,1]$ random variables independent of $(\zeta_{\{i,j\}})_{j>i\geq1}$.
Thus, we define
\[g'_1(\alpha_{\emptyset},\alpha_{\{i\}},\alpha_{\{j\}},\chi_{\{i,j\}},G^{ij}):=g_1(\alpha_{\emptyset},\alpha_{\{i\}},\alpha_{\{j\}},u(G^{ij},\chi_{\{i,j\}}))\equalinlaw g_1(\alpha_{\emptyset},\alpha_{\{i\}},\alpha_{\{j\}},\alpha_{\{i,j\}}),\]
so that
\[(\Gamma,\Gamma')\equalinlaw(g_0'({\alpha}_{\{i,j\}}),g'_1(\alpha_{\emptyset},\alpha_{\{i\}},\alpha_{\{j\}},\alpha_{\{i,j\}},g'_0(\alpha_{\{i,j\}})))_{i,j\geq1}.\]
Conditioning on $\{\Gamma=G\}$ gives
\[P(G,\cdot)=\mathbb{P}\{(g'_1(\alpha_{\emptyset},\alpha_{\{i\}},\alpha_{\{j\}},\alpha_{\{i,j\}},G^{ij}))_{i,j\geq1}\in\cdot\}\quad\text{for }\varepsilon_{1/2}\text{-almost every }G\in\graphsN.\]
By the Feller property, $P(G,\cdot)$ is continuous in the first argument and, thus, the above equality holds for all $G\in\graphsN$ and representation  \eqref{eq:undirected WE rep} follows.
\end{proof}

%\subsection{Construction from a single process}\label{section:one process}
%
%Our proof of Theorem \ref{thm:discrete-rep} uses only the fact that $\varepsilon_{1/2}$ has full support on $\graphsN$; however, we could say quite a bit more by using the stronger properties of universality and ultrahomogeneity of Erd\H{o}s--R\'enyi graphs; see Section \ref{section:ER}.
%These properties permit a simultaneous embedding of $\Gbf=\{\Gbf_G:\,G\in\graphsN\}$ within a single process $\Gbf^{\dagger}=(\Gamma_t^{\dagger})_{t\in T}$, which we generate from $\Gbf$ by
%\begin{itemize}
%	\item $\Gamma^{\dagger}_0\sim\varepsilon_{1/2}$ and
%	\item $\Gbf^{\dagger}=\Gbf_G$ on the event $\Gamma^{\dagger}_0=G$.
%\end{itemize}
%For each $G\in\graphsN$, we let $\phi_G:\Nb\rightarrow\Nb$ be the injection obtained by choosing $\phi_G(1)<\phi_G(2)<\cdots$ successively such that each $\phi_G(n)$ is the smallest integer such that $\Gamma_0^{\dagger\phi_G|_{[n]}}=G|_{[n]}$, where $\phi_G|_{[n]}$ is the domain restriction of $\phi_G$ to a map $[n]\rightarrow\Nb$.
%Thus $\Gamma_0^{\dagger\phi_G}=G$ for every $G\in\graphsN$.
%The existence of such $\phi_G$ for every $G\in\graphsN$ is a consequence of the ultrahomogeneity property of $\Gamma_0^{\dagger}$.
%Exchangeability and consistency properties of $\Gbf$ guarantee that $\Gbf^{\dagger\phi_G}\equalinlaw\Gbf_G$ for every $G\in\graphsN$.

\subsection{Discrete-time characterization}

The above construction of $\Gbf$ from the single process $\Gbf^{\dagger}$ is closely tied to the even stronger representation in Theorem \ref{thm:discrete char}, according to which $\Gbf$ can be constructed from a single i.i.d.\ sequence of exchangeable rewiring maps.

\begin{prop}
For an exchangeable probability measure $\omega$ on $\wireN$, let $\mathbf{\Gamma}^*_{\omega}=\{\Gbf^*_{G,\omega}:\,G\in\graphsN\}$ be constructed as in \eqref{eq:iterated construction} from an i.i.d.\ sequence $W_1,W_2,\ldots$ from $\omega$.  Then $\mathbf{\Gamma}^*_{\omega}$ is an exchangeable Feller chain on $\graphsN$.
\end{prop}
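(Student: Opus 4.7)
The plan is to verify the three conditions in turn: Markov property, exchangeability of the transition law, and consistency, the last of which yields the Feller property via Proposition \ref{prop:Feller equiv}. Throughout I will lean on the two algebraic facts packaged in Lemma \ref{lemma:existence limit density}: associativity of composition of rewiring maps, and the equivariance identity $(W(G))^{\sigma}=W^{\sigma}(G^{\sigma})$.

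First I would verify the Markov property. By \eqref{eq:iterated construction}, $\Gamma^*_m=W_m(\Gamma^*_{m-1})$, and $\Gamma^*_{m-1}$ is a measurable function of the initial graph $G$ and of $W_1,\ldots,W_{m-1}$. Since $W_1,W_2,\ldots$ are i.i.d.\ from $\omega$, $W_m$ is independent of $(W_1,\ldots,W_{m-1})$ and hence of $(\Gamma^*_0,\ldots,\Gamma^*_{m-1})$. Therefore the conditional law of $\Gamma^*_m$ given the past equals $\omega\circ(\cdot(\Gamma^*_{m-1}))^{-1}$, which depends only on $\Gamma^*_{m-1}$, and is time-homogeneous because the $W_m$ share the same law.

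Next I would establish exchangeability of the one-step transition kernel $P(G,\cdot):=\omega(\{W:W(G)\in\cdot\})$. Fix $\sigma\in\symmetricN$, $G\in\graphsN$, and a measurable $S\subseteq\graphsN$. Using Lemma \ref{lemma:existence limit density}(ii),
\[
\omega(\{W:W(G^{\sigma})\in S\})=\omega(\{W:(W^{\sigma^{-1}}(G))^{\sigma}\in S\})=\omega(\{W:W^{\sigma^{-1}}(G)\in S^{\sigma^{-1}}\}),
\]
and the exchangeability of $\omega$ turns the final expression into $\omega(\{W:W(G)\in S^{\sigma^{-1}}\})=P(G,S^{\sigma^{-1}})$. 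Hence $P(G^{\sigma},S)=P(G,S^{\sigma^{-1}})$, which is precisely the exchangeability condition \eqref{eq:exch tps}. Iterating through the definition of $\Gbf^{*}_{\omega}$ then gives the corresponding identity at every time step.

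Finally, for consistency I would argue that the construction commutes with restriction. For every $n\in\Nb$ and every $W\in\wireN$, the rewiring rule \eqref{eq:wire G} is pointwise in the edge index, so $W(G)|_{[n]}=W|_{[n]}(G|_{[n]})$. Applied inductively to \eqref{eq:iterated construction} this yields $\Gamma^{*}_m|_{[n]}=(W_m|_{[n]}\circ\cdots\circ W_1|_{[n]})(G|_{[n]})$, whose law depends on $G$ only through $G|_{[n]}$; this is the consistency condition \eqref{eq:consistent}. Combined with exchangeability, Proposition \ref{prop:Feller equiv} upgrades consistency to the Feller property, completing the proof. I do not expect any of these steps to pose a genuine obstacle; the mild subtlety is bookkeeping with $\sigma$ and $\sigma^{-1}$ in the exchangeability computation, which is dispatched cleanly by the equivariance identity in Lemma \ref{lemma:existence limit density}(ii).
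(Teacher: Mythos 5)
Your proposal is correct and follows essentially the same route as the paper: the Markov property from the i.i.d.\ structure of the $W_m$, exchangeability via the equivariance identity of Lemma \ref{lemma:existence limit density}(ii), consistency from the fact that rewiring acts entrywise (which is exactly the content of the Lipschitz-constant-1 statement in Lemma \ref{lemma:existence limit density}(i) under the ultrametric \eqref{eq:metric}), and then Proposition \ref{prop:Feller equiv} to upgrade to Feller. You simply spell out the $\sigma$/$\sigma^{-1}$ bookkeeping that the paper leaves implicit.
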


\begin{proof}
By our assumption that $W_1,W_2,\ldots$ are independent and identically distributed, each $\mathbf{\Gamma}^*_{G,\omega}$ has the Markov property.
Exchangeability of $\omega$ and Lemma \ref{lemma:existence limit density}(ii) pass exchangeability along to $\mathbf{\Gamma}^*_{\omega}$.
  By Lemma \ref{lemma:existence limit density}(i), every $W\in\wireN$ is Lipschitz continuous with Lipschitz constant 1, and so each $\mathbf{\Gamma}^{*[n]}_{G,\omega}$ also has the Markov property for every $n\in\mathbb{N}$.
By Proposition \ref{prop:Feller equiv}, $\mathbf{\Gamma}^*_{\omega}$ is Feller and the proof is complete.
\end{proof}

\begin{defn}[Rewiring Markov chains]
We call $\mathbf{\Gamma}^*_{\omega}$ constructed as in \eqref{eq:iterated construction} a {\em rewiring chain} with {\em rewiring measure} $\omega$.
If $\omega=\Omega_{\Upsilon}$ for some probability measure $\Upsilon$ on $\limitwiredensities$, we call $\mathbf{\Gamma}^*_{\Upsilon}$ a {\em rewiring chain directed by $\Upsilon$}.
\end{defn}

\begin{thm}\label{thm:rewiring char}
Let $\mathbf{\Gamma}$ be an exchangeable Feller chain on $\graphsN$.  Then there exists an exchangeable probability measure $\omega$ on $\wireN$ so that $\mathbf{\Gamma}\equalinlaw\mathbf{\Gamma}^*_{\omega}$, where $\mathbf{\Gamma}^*_{\omega}=\{\Gbf^*_{G,\omega}:\,G\in\graphsN\}$ is a rewiring Markov chain with rewiring measure $\omega$.
\end{thm}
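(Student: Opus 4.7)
The plan is to upgrade Theorem \ref{thm:discrete-rep} from a one-step conditional representation into a construction via i.i.d.\ rewiring maps, using the fact that the Aldous--Hoover function $f$ in \eqref{eq:undirected WE rep} depends on the input graph \emph{only} through the pointwise entry $G^{ij}$. This means the transition from $G$ to $\Gamma'$ can be factored through a random function on $\graphsN$ rather than through a random function on some larger noise space.

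First, invoke Theorem \ref{thm:discrete-rep} to obtain a measurable function $f:[0,1]^4\times\{0,1\}\rightarrow\{0,1\}$ with $f(\cdot,b,c,\cdot,\cdot)=f(\cdot,c,b,\cdot,\cdot)$ that represents the one-step transition $P$. Let $\{\zeta_{\emptyset};(\zeta_{\{i\}})_{i\geq1};(\zeta_{\{i,j\}})_{j>i\geq1}\}$ be i.i.d.\ Uniform$[0,1]$ and define a random array $W=(W^{ij})_{i,j\geq1}$ by $W^{ii}:=(0,0)$ and, for $i\neq j$,
\[
W^{ij}_0:=f(\zeta_{\emptyset},\zeta_{\{i\}},\zeta_{\{j\}},\zeta_{\{i,j\}},0),\qquad W^{ij}_1:=f(\zeta_{\emptyset},\zeta_{\{i\}},\zeta_{\{j\}},\zeta_{\{i,j\}},1),
\]
with $\zeta_{\{j,i\}}:=\zeta_{\{i,j\}}$. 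The symmetry of $f$ in its middle two arguments gives $W^{ij}=W^{ji}$, so $W\in\wireN$. Let $\omega$ denote the law of $W$. Since the joint distribution of the uniforms is invariant under any $\sigma\in\symmetricN$, and since $W$ is built as a symmetric function of them, $\omega$ is exchangeable.

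Next, check the one-step match. For any fixed $G\in\graphsN$, the definition of the rewiring action gives
\[
W(G)^{ij}=W^{ij}_{G^{ij}}=f(\zeta_{\emptyset},\zeta_{\{i\}},\zeta_{\{j\}},\zeta_{\{i,j\}},G^{ij}),\qquad i,j\geq1,
\]
which is exactly the representation \eqref{eq:undirected WE rep} of a sample from $P(G,\cdot)$. Hence $W(G)\sim P(G,\cdot)$ for every $G\in\graphsN$. Now let $W_1,W_2,\ldots$ be i.i.d.\ from $\omega$ and define $\mathbf{\Gamma}^*_{\omega}$ by \eqref{eq:iterated construction}. For each $G\in\graphsN$, $\mathbf{\Gamma}^*_{G,\omega}$ is a Markov chain (by independence of the $W_k$'s) with $\Gamma^*_0=G$ and one-step law $P$. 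Since $\mathbf{\Gamma}_G$ is also a Markov chain started at $G$ with transition $P$, a routine induction on $m$ (applying the tower property, the i.i.d.\ assumption, and the one-step equality established above) yields equality of all finite-dimensional distributions, so $\mathbf{\Gamma}^*_{G,\omega}\equalinlaw\mathbf{\Gamma}_G$.

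The main obstacle is really subsumed inside Theorem \ref{thm:discrete-rep}: one might worry that the noise needed to generate $\Gamma_{m+1}$ from $\Gamma_m$ could genuinely depend on the current state and therefore refuse to factor through a state-independent random rewiring map. The punchline of the conditional Aldous--Hoover representation is precisely that, \emph{under exchangeability and consistency}, the update rule at edge $ij$ depends on the present only through $G^{ij}$, which is what allows the two functions $(f(\cdot,0),f(\cdot,1))$ to be packaged as the two slots $(W^{ij}_0,W^{ij}_1)$ of a single rewiring map. Once that structural fact is in hand, the remainder of the proof is bookkeeping: verifying symmetry/exchangeability of the induced $\omega$, and turning the one-step identity into a process-level identity via the Markov property.
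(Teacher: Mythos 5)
Your proposal is correct and follows essentially the same route as the paper: both define the rewiring map by packaging the two slices $f(\cdot,0)$ and $f(\cdot,1)$ of the conditional Aldous--Hoover function from Theorem \ref{thm:discrete-rep} into the pair $(W^{ij}_0,W^{ij}_1)$, observe that $W(G)^{ij}=f(\zeta_{\emptyset},\zeta_{\{i\}},\zeta_{\{j\}},\zeta_{\{i,j\}},G^{ij})$ reproduces the one-step transition law, and then iterate with i.i.d.\ copies. Your additional remarks on symmetry, exchangeability of $\omega$, and the induction to process-level equality are correct elaborations of steps the paper leaves implicit.
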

\begin{proof}
By Theorem \ref{thm:discrete-rep}, there exists a measurable function $f:[0,1]^4\times\{0,1\}\rightarrow\{0,1\}$ so that the transitions of $\mathbf{\Gamma}$ can be constructed as in \eqref{eq:undirected WE rep}.  
From $f$, we define $f^*:[0,1]^4\rightarrow\{0,1\}^2$ by
\[f^*(a,b,c,d):=(f(a,b,c,d,0),f(a,b,c,d,1)).\]
Given  i.i.d.\ Uniform$[0,1]$ random variables $\{\zeta_{\emptyset};(\zeta_{\{i\}})_{i\geq1}; (\zeta_{\{i,j\}})_{j>i\geq1}\}$, we construct a weakly exchangeable $\{0,1\}^2$-valued array $W^*:=(W^{*ij})_{i,j\geq1}$ by
\[W^{*ij}:=f^*(\zeta_{\emptyset},\zeta_{\{i\}},\zeta_{\{j\}},\zeta_{\{i,j\}}),\quad i,j\geq1.\]
We write $\omega$ to denote the essentially unique distribution of $W^*$.

 Treating $W^*$ as a rewiring map, the image $W^*(G)=\Gamma^*$ satisfies
\[\Gamma^{*ij}=f(\zeta_{\emptyset},\zeta_{\{i\}},\zeta_{\{j\}},\zeta_{\{i,j\}},G^{ij})\quad\text{ for all }i,j\geq1,\]
for every $G\in\graphsN$; whence, $\mathbf{\Gamma}^*_{\omega}$ in \eqref{eq:iterated construction} has the correct transition probabilities.

The proof is complete.
\end{proof}

\subsection{Characterizing the rewiring measure}
Theorem \ref{thm:rewiring char} asserts that any exchangeable Feller chain on $\graphsN$ can be constructed as a rewiring chain.
In the discussion surrounding Lemma \ref{lemma:existence limit density} above, we identify every exchangeable probability measure $\omega$ with a unique probability measure $\Upsilon$ on $\limitwiredensities$ through the relation $\omega=\Omega_{\Upsilon}$, with $\Omega_{\Upsilon}$ defined in \eqref{eq:rewiring measure}.
Our next proposition records this fact.

\begin{prop}\label{prop:mixing measure discrete}
Let $\omega$ be an exchangeable probability measure on $\wireN$.  Then there exists an essentially unique probability measure $\Upsilon$ on $\limitwiredensities$ such that $\omega=\Omega_{\Upsilon}:=\int_{\limitwiredensities}\Omega_{\upsilon}\Upsilon(d\upsilon)$.
\end{prop}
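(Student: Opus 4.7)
The plan is to realize this as an ergodic decomposition for exchangeable arrays and then transfer it via the correspondence $\upsilon \leftrightarrow \Omega_\upsilon$ already recorded in Lemma \ref{lemma:existence limit density}(iii).

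First, I would view a rewiring map $W=(W^{ij})_{i,j\ge 1}$ as a symmetric $\mathcal{X}$-valued array with $\mathcal{X}=\{0,1\}\times\{0,1\}$, subject to the diagonal constraint $W^{ii}=(0,0)$; exchangeability of $\omega$ means exactly that this array is jointly exchangeable in the sense of Theorem \ref{thm:A-H}. The Aldous--Hoover representation then produces, for $W\sim\omega$, an a.s.\ representation $W^{ij}=f(\zeta_\emptyset,\zeta_{\{i\}},\zeta_{\{j\}},\zeta_{\{i,j\}})$ with $\{\zeta_\emptyset;(\zeta_{\{i\}});(\zeta_{\{i,j\}})\}$ i.i.d.\ Uniform$[0,1]$. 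Conditioning on the tail variable $\zeta_\emptyset$ yields a regular conditional distribution $\omega(\cdot\mid\zeta_\emptyset)$ whose samples are dissociated (all $\zeta_{\{i\}}$ and $\zeta_{\{i,j\}}$ are independent across disjoint index sets) and still exchangeable.

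Second, I would use Lemma \ref{lemma:existence limit density}(iii) to identify each such dissociated exchangeable law with a point of $\limitwiredensities$: if $W$ is drawn from a dissociated exchangeable measure on $\wireN$, then $|W|$ exists a.s.\ and is deterministic, so that measure coincides with $\Omega_\upsilon$ for $\upsilon=|W|$. Consequently, the regular conditional distributions above are a.s.\ of the form $\Omega_{|W|}$, and pushing the law of $|W|$ under $\omega$ forward to $\limitwiredensities$ gives a probability measure $\Upsilon$ with
\[
\omega(\cdot)=\mathbb{E}_\omega\bigl[\Omega_{|W|}(\cdot)\bigr]=\int_{\limitwiredensities}\Omega_\upsilon(\cdot)\,\Upsilon(d\upsilon),
\]
which is the required identity $\omega=\Omega_\Upsilon$. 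Measurability of $\upsilon\mapsto \Omega_\upsilon(A)$ for cylinder $A$ (hence for Borel $A$) is routine from the definition of the product-discrete Borel structure and justifies the integral.

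For essential uniqueness, I would observe that the map $\upsilon\mapsto\Omega_\upsilon$ is injective: by Lemma \ref{lemma:existence limit density}(iii), $\Omega_\upsilon$-almost every $W$ has $|W|=\upsilon$, so the push-forward of $\Omega_\upsilon$ under $W\mapsto|W|$ is the Dirac mass $\delta_\upsilon$. Hence for any $\Upsilon,\Upsilon'$ with $\Omega_\Upsilon=\Omega_{\Upsilon'}=\omega$, the push-forward of $\omega$ under $W\mapsto|W|$ equals both $\Upsilon$ and $\Upsilon'$, forcing $\Upsilon=\Upsilon'$. The main (really the only) obstacle is verifying the measurability/regular-conditional-distribution step cleanly on the Polish space $\wireN$; this is a standard application of disintegration since $\wireN$ is Polish and the tail $\sigma$-field generated by $\zeta_\emptyset$ is countably generated, so no essentially new work is needed beyond assembling the Aldous--Hoover representation and Lemma \ref{lemma:existence limit density}(iii).
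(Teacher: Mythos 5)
Your proposal is correct and follows essentially the same route as the paper: both arguments combine the Aldous--Hoover theorem with Lemma \ref{lemma:existence limit density}(iii) to disintegrate $\omega$ over the a.s.-existing rewiring limit $|W|$ and take $\Upsilon$ to be the push-forward of $\omega$ under $|\cdot|$. Your explicit injectivity argument for essential uniqueness (via the push-forward of $\Omega_\upsilon$ being $\delta_\upsilon$) is a welcome addition that the paper's one-line proof leaves implicit.
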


\begin{proof}
This is a combination of the Aldous--Hoover theorem and Lemma \ref{lemma:existence limit density}(iii).  By Lemma \ref{lemma:existence limit density}(iii), $\omega$-almost every $W\in\wireN$ possesses a rewiring limit, from which the change of variables formula gives
\[\omega(dW):=\int_{\wireN}\Omega_{|V|}(dW)\omega(dV)=\int_{\limitwiredensities}\Omega_{\upsilon}(dW)|\omega|(d\upsilon)=\Omega_{|\omega|}(dW),\]
where $|\omega|$ denotes the image measure of $\omega$ by $|\cdot|:\wireN\rightarrow\limitwiredensities\cup\{\partial\}$.
\end{proof}

\begin{proof}[Proof of Theorem \ref{thm:discrete char}]
Follows from Theorem \ref{thm:rewiring char} and Proposition \ref{prop:mixing measure discrete}.
\end{proof}

\subsection{The induced chain on $\limitdensities$}\label{section:induced chain}
Any $\upsilon\in\limitwiredensities$ corresponds to a unique transition probability $P_{\upsilon}$ on $\graphsN\times\graphsN$, as defined in \eqref{eq:induced tps}.
Moreover, $\upsilon\in\limitwiredensities$ acts on $\limitdensities$ by composition of probability measures, as  in  \eqref{eq:right action}.  
For $\upsilon,\upsilon'\in\limitwiredensities$, we define the product $P'=P_{\upsilon}P_{\upsilon'}$ by
\begin{equation}\label{eq:product upsilon matrices}
%P'_{ij}:=\left\{\begin{array}{cc}
%\sum_{k:F_k\in\graphsn}P_{\upsilon}(i,k)P_{\upsilon'}(k,j),& F_i,F_j\in\graphsn\text{ for some }n\in\mathbb{N}\\
%0,&\text{otherwise.}\end{array}\right.
P'(G,dG'):=\int_{\graphsN}P_{\upsilon'}(G'',dG')P_{\upsilon}(G,dG''),\quad G,G'\in\graphsN.
\end{equation}

\begin{lemma}\label{lemma:existence wire density}
Let $W,W'\in\wireN$ be independent exchangeable random rewiring maps such that $|W|$ and $|W'|$ exist almost surely.  Then $|W'\circ W|$ exists almost surely and
\[P_{|W'\circ W|}=P_{|W|}P_{|W'|}\quad\text{a.s.}\]
\end{lemma}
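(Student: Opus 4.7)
The plan is to build $|W' \circ W|$ from the conditional law of $W' \circ W$ given the pair $(|W|,|W'|)$, then use the essential uniqueness of the ergodic decomposition in Proposition \ref{prop:mixing measure discrete} to match it against the kernel product.

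First I would observe that composition is entrywise: $(W'\circ W)^{ij}_k = W'^{ij}_{\,W^{ij}_k}$ for $k\in\{0,1\}$, so $W'\circ W$ is a measurable function of $(W,W')$. Lemma \ref{lemma:existence limit density}(ii) gives $(W'\circ W)^{\sigma}=W'^{\sigma}\circ W^{\sigma}$ for every $\sigma\in\symmetricN$; combined with independence and marginal exchangeability of $W$ and $W'$ this yields $(W'\circ W)^{\sigma}\equalinlaw W'\circ W$. Hence $W'\circ W$ is an exchangeable random element of $\wireN$, and Lemma \ref{lemma:existence limit density}(iii) guarantees that $|W'\circ W|$ exists almost surely.

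Next I would introduce the random measure
\[
\tilde{\omega}(\cdot):=\mathbb{P}\{W'\circ W\in\cdot\mid |W|,|W'|\}.
\]
Since $W\perp W'$ and $|W|,|W'|$ are each a.s.\ determined by the corresponding map, conditionally on $(|W|,|W'|)$ we have $W\sim\Omega_{|W|}$ and $W'\sim\Omega_{|W'|}$ independently. Conditional exchangeability of $\tilde{\omega}$ is then immediate. For dissociatedness, note that for disjoint $S_1,S_2\subseteq\Nb$, dissociatedness of $\Omega_{|W|}$ and $\Omega_{|W'|}$ makes $(W|_{S_1},W'|_{S_1})$ and $(W|_{S_2},W'|_{S_2})$ conditionally independent given $(|W|,|W'|)$, and since $(W'\circ W)|_{S_j}$ is a deterministic function of $(W|_{S_j},W'|_{S_j})$, this transfers to $(W'\circ W)|_{S_1}$ and $(W'\circ W)|_{S_2}$. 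By Proposition \ref{prop:mixing measure discrete}, there is a unique random $\tilde{\upsilon}\in\limitwiredensities$ with $\tilde{\omega}=\Omega_{\tilde{\upsilon}}$, and Lemma \ref{lemma:existence limit density}(iii) applied to $\tilde{\omega}$ forces $\tilde{\upsilon}=|W'\circ W|$ almost surely.

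Finally I would identify the kernels. Fix $G\in\graphsN$ and a cylinder event $A=\{G''\in\graphsN:G''|_{[n]}=F\}$ for some $F\in\graphsn$. On one hand,
\[
P_{|W'\circ W|}(G,A)=\Omega_{|W'\circ W|}(\{V\in\wireN:V(G)\in A\})=\tilde{\omega}(\{V:V(G)\in A\})=\mathbb{P}\{(W'\circ W)(G)\in A\mid |W|,|W'|\}.
\]
On the other hand, the conditional independence of $W$ and $W'$, together with the definition \eqref{eq:induced tps}, gives
\[
\mathbb{P}\{W'(W(G))\in A\mid |W|,|W'|\}=\mathbb{E}\bigl[P_{|W'|}(W(G),A)\mid |W|,|W'|\bigr]=\int_{\graphsN}P_{|W'|}(G'',A)\,P_{|W|}(G,dG''),
\]
which is exactly $(P_{|W|}P_{|W'|})(G,A)$ by \eqref{eq:product upsilon matrices}. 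Since both sides depend on $G$ only through $G|_{[n]}$ and since there are only countably many pairs $(F,n)$ indexing cylinder events, a single null set handles all of them, yielding $P_{|W'\circ W|}=P_{|W|}P_{|W'|}$ almost surely.

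The main obstacle is the dissociatedness of $\tilde{\omega}$: once that is secured, the uniqueness part of Proposition \ref{prop:mixing measure discrete} both pins down $|W'\circ W|$ as a function of $(|W|,|W'|)$ and hands the kernel identity over without further work.
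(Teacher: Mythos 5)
Your proof is correct and follows the same route as the paper, whose own proof is a one-line appeal to exactly the ingredients you elaborate: independence of $W$ and $W'$, the entrywise (hence local) definition of $W'\circ W$, and the definition of the kernel product in \eqref{eq:product upsilon matrices}. The only substantive additions are the verification that $W'\circ W$ is exchangeable (so its rewiring limit exists) and the dissociatedness of the conditional law given $(|W|,|W'|)$, both of which are correct and are the right way to make the paper's terse argument rigorous.
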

\begin{proof}
Follows by independence of $W$ and $W'$, the definition of $W'\circ W$, and the definition of $P_{|W|}P_{|W'|}$ in \eqref{eq:product upsilon matrices}.
\end{proof}

\begin{proof}[Proof of Theorem \ref{thm:induced density chain}]
By Theorem \ref{thm:discrete char}, there exists a measure $\Upsilon$ on $\limitwiredensities$ such that we can regard $\Gbf$ as $\Gbf_{\Upsilon}^*$, the rewiring chain directed by $\Upsilon$.  
Thus, for every $G\in\graphsN$, $\Gbf_G=(\Gamma_m)_{m\geq0}$ is determined by $\Gamma_0=G$ and 
\[\Gamma_m\equalinlaw (W_m\circ\cdots\circ W_1)(G),\quad m\geq1,\]
where $W_1,W_2,\ldots$ are i.i.d.\ with distribution $\Omega_{\Upsilon}$.

For any $D\in\limitdensities$, $\Gbf_D=(\Gamma_m)_{m\geq0}$ is generated from $\Gbf=\{\Gbf_G:\,G\in\graphsN\}$ by first taking $\Gamma_0\sim\gamma_D$ and then putting $\Gbf_{D}=\Gbf_G$ on the event $\Gamma_0=G$.   
By the Aldous--Hoover theorem along with exchangeability of $\gamma_D$ and $\Omega_{\Upsilon}$, the graph limit $|\Gamma_m|$ exists almost surely for every $m\geq0$ and, thus, $|\mathbf{\Gamma}_{D}|:=(|\Gamma_m|)_{m\geq0}$ exists almost surely in $\limitdensities$.

Also, by Lemma \ref{lemma:existence limit density}(iii), $|W_1|,|W_2|,\ldots$ is i.i.d.\ from $\Upsilon$, and, by Lemma \ref{lemma:existence wire density},  $|W_m\circ\cdots\circ W_1|$ exists almost surely and 
\[P_{|W_m\circ\cdots\circ W_1|}=P_{|W_1|}\cdots P_{|W_m|}\quad\text{a.s.}\text{ for every }m\geq1.\]
Finally, by Theorem \ref{thm:discrete char}, 
\[|\Gamma_m|\equalinlaw|(W_m\circ\cdots\circ W_1)(\Gamma_0)|=|\Gamma_0|{|W_1|}\cdots {|W_m|}\quad\text{a.s.\ for every }m\geq1,\]
and so each $|\mathbf{\Gamma}_{D}|$ is a Markov chain on $\limitdensities$ with initial state $D$ and representation \eqref{eq:iterated product}.

To establish the Feller property, we use the fact that $\limitdensities$ is compact and, therefore, any continuous function $h:\limitdensities\rightarrow\mathbb{R}$ is uniformly continuous and bounded.
The dominated convergence theorem and Lipschitz continuity of the action of $\upsilon\in\limitwiredensities$ on $\limitdensities$ immediately give part (i) of the Feller property.  
Part (ii) of the Feller property is trivial because $|\Gbf_{\limitdensities}|$ is a discrete-time Markov chain.
\end{proof}

We could interpret $P'$  in \eqref{eq:product upsilon matrices} as the two-step transition probability measure for a time-inhomogeneous Markov chain on $\graphsN$ with first step governed by $P_{\upsilon}$ and second step governed by $P_{\upsilon'}$.
By this interpretation, Theorem \ref{thm:induced density chain} leads to another description of any exchangeable Feller chain $\mathbf{\Gamma}$ as a mixture of time-inhomogeneous Markov chains.
In particular, we can construct $\mathbf{\Gamma}$ by first sampling $Y_1,Y_2,\ldots$ i.i.d.\ from the unique directing measure $\Upsilon$.
Each $Y_i$ induces an exchangeable transition probability $P_{Y_i}$ as in \eqref{eq:induced tps}.
At each time $m\geq1$, given $\Gamma_{m-1},\ldots,\Gamma_0$ and $Y_1,Y_2,\ldots$, we generate $\Gamma_{m}\sim P_{Y_m}(\Gamma_{m-1},\cdot)$.
%The unconditional law of $\mathbf{\Gamma}=(\Gamma_m,\,m\geq0)$ with this construction is 
%This  immediately induces an i.i.d.\ sequence of random transition probability matrices $P^{(n)}_{Y_1},P^{(n)}_{Y_2},\ldots$ on $\graphsn$.  At each time %$m\geq1$, we obtain $\Gamma_{m}|_{[n]}$ given $\Gamma_{m-1}|_{[n]}=G$ by sampling from $P^{(n)}_{Y_m}(G,\cdot)$.  In this way, conditional on %$Y_1,Y_2,\ldots$, $\mathbf{\Gamma}^{[n]}$ is a time-inhomogeneous Markov chain on $\graphsn$.  By construction, the finite-dimensional transition %probability matrices $(P^{(n)}_{Y_m},\,n\in\mathbb{N})$ are consistent for every $m\geq1$, and so this procedure can be carried out simultaneously for %all $n\in\mathbb{N}$ to obtain a consistent system of Markov chains on $(\graphsn,\,n\in\mathbb{N})$.

\subsection{Examples}

\begin{example}[Product Erd\H{o}s--R\'enyi chains]\label{ex:ER-chain}
Let $(p_0,p_1)\in(0,1)\times(0,1)$ and put $\omega=\varepsilon_{p_0}\otimes\varepsilon_{p_1}$, where $\varepsilon_p$ denotes the Erd\H{o}s--R\'enyi measure with parameter $p$.  
This measure generates transitions out of state $G\in\graphsN$ by flipping a $p_k$-coin with $k=G^{ij}$ independently for every $j>i\geq1$.  The limiting trajectory in $\limitdensities$ is deterministic and settles down to degenerate stationary distribution at the graph limit of an $\varepsilon_q$-random graph, with $q:=p_0/(1-p_1+p_0)$.  
\end{example}

\begin{example}[A reversible family of Markov chains]\label{ex:reversible}

The model in Example \ref{ex:ER-chain} generalizes by mixing $(p_0,p_1)$ with respect to any measure on $[0,1]\times[0,1]$.  For example, let $\alpha,\beta\geq0$ and define $\omega=\varepsilon_{P_0}\otimes\varepsilon_{P_1}$ for $(P_0,P_1)\sim\mathscr{B}_{\alpha,\beta}\otimes\mathscr{B}_{\beta,\alpha}$, where $\mathscr{B}_{\alpha,\beta}$ is the Beta distribution with parameter $(\alpha,\beta)$.  
The finite-dimensional transition probabilities of this mixture model are
\[\mathbb{P}\{\Gamma_{m+1}^{[n]}=F'\mid\Gamma_m^{[n]}=F\}=\frac{\alpha^{\uparrow n_{00}}\beta^{\uparrow n_{01}}\beta^{\uparrow n_{10}}\alpha^{\uparrow n_{11}}}{(\alpha+\beta)^{\uparrow n_{0\bullet}}(\alpha+\beta)^{\uparrow n_{1\bullet}}},\quad F,F'\in\graphsn,\]
where $n_{rs}:=\sum_{1\leq i<j\leq n}\mathbf{1}\{F^{ij}=r,\,F'^{ij}=s\}$ and $n_{r\bullet}:=n_{r0}+n_{r1}$, $r=0,1$.  These transition probabilities are reversible with respect to
\[\mathbb{P}\{\Gamma^{[n]}=F\}=\frac{(\alpha+\beta)^{\uparrow n_0}(\alpha+\beta)^{\uparrow n_1}}{(2\alpha+2\beta)^{\uparrow {n\choose 2}}},\quad F\in\graphsn,\]
where $n_r:=\sum_{1\leq i<j\leq n}\mathbf{1}\{F^{ij}=r\}$, for $r=0,1$.  %See \cite{Crane2013Graphs0} for discussion of these and related chains.

\end{example}

\section{Continuous-time processes and the L\'evy--It\^o measure}\label{section:continuous}
We now let $\mathbf{\Gamma}:=\{\Gbf_G:\,G\in\graphsN\}$ be a continuous-time exchangeable Feller process on $\graphsN$.  
Any such process can jump infinitely often in arbitrarily small time intervals; however, by the consistency property \eqref{eq:consistent}, every finite restriction $\mathbf{\Gamma}^{[n]}$ is a finite-state space Markov chain and can jump only finitely often in bounded intervals. 
As we show, the interplay between these possibilities restricts the behavior of $\mathbf{\Gamma}$ in a precise way.

As before, $\id$ stands for the identity map $\graphsN\rightarrow\graphsN$.  Let $\omega$ be an exchangeable measure on $\wireN$ that satisfies
\begin{equation}\label{eq:regularity omega1}
\omega(\{\id\})=0\quad\text{and}\quad\omega(\{W\in\wireN:W|_{[2]}\neq\idtwo\})<\infty.
\end{equation}
We proceed as in \eqref{eq:PPP construction} and construct a process $\mathbf{\Gamma}^*_{\omega}:=\{\Gamma^*_{G,\omega}:\,G\in\graphsN\}$ on $\graphsN$ from a Poisson point process $\mathbf{W}=\{(t,W_t)\}\subseteq\mathbb{R}^+\times\wireN$ with intensity $dt\otimes\omega$.

\begin{prop}\label{prop:suff}
Let $\omega$ be an exchangeable measure satisfying \eqref{eq:regularity omega1}.
Then $\mathbf{\Gamma}^*_{\omega}$ constructed in \eqref{eq:PPP construction} is an exchangeable Feller process on $\graphsN$.
\end{prop}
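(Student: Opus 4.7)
The plan is to establish the three constituent properties of an exchangeable Feller process separately and then combine them via Proposition \ref{prop:Feller equiv}. Specifically, I aim to show in turn that (a) for each $n$ the restricted process $\mathbf{\Gamma}^{*[n]}_{G,\omega}$ is a well-defined c\`adl\`ag Markov chain on the finite state space $\graphsn$; (b) these restrictions are consistent in the sense of \eqref{eq:consistent} and therefore assemble into a single $\graphsN$-valued Markov process $\mathbf{\Gamma}^*_{G,\omega}$; (c) the resulting family $\{\Gbf^*_{G,\omega}:\,G\in\graphsN\}$ is exchangeable; and (d) consistency plus exchangeability yields the Feller property by Proposition \ref{prop:Feller equiv}.

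The one nontrivial preliminary is to upgrade the hypothesis \eqref{eq:regularity omega1}, which only controls the measure of $\{W: W|_{[2]} \neq \idtwo\}$, to the full regularity $\omega(\{W: W|_{[n]} \neq \idn\}) < \infty$ for every $n$. Since $W|_{[n]} = \idn$ exactly when $W^{ij} = (0,1)$ for all distinct $i,j \in [n]$, I would write
\[
\{W : W|_{[n]} \neq \idn\} \subseteq \bigcup_{1 \leq i < j \leq n}\{W : W^{ij} \neq (0,1)\},
\]
and use exchangeability of $\omega$ to conclude that each set on the right has the same $\omega$-measure as $\{W: W^{12} \neq (0,1)\} = \{W : W|_{[2]} \neq \idtwo\}$, which is finite by \eqref{eq:regularity omega1}. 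A union bound then gives $\omega(\{W: W|_{[n]} \neq \idn\}) \leq \binom{n}{2} \omega(\{W : W|_{[2]} \neq \idtwo\}) < \infty$. With this in hand, on any bounded interval $[0,T]$ the Poisson point process $\mathbf{W}$ has almost surely only finitely many atoms with $W_t|_{[n]} \neq \idn$, so \eqref{eq:PPP construction} defines a c\`adl\`ag path on the finite state space $\graphsn$, and the usual independence properties of the Poisson point process promote this to a Markov chain, settling (a).

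Step (b) is immediate from the identity $(W(G))|_{[m]} = W|_{[m]}(G|_{[m]})$, which implies that the jumps of $\mathbf{\Gamma}^{*[n]}_{G,\omega}$ projected to $\graphsm$ coincide with those of $\mathbf{\Gamma}^{*[m]}_{G,\omega}$ for every $m \leq n$; the projective limit defines $\mathbf{\Gamma}^*_{G,\omega}$, and dependence on $G$ only through restrictions gives \eqref{eq:consistent} and the Markov property in $\graphsN$. For (c), exchangeability of $\omega$ makes the relabeled Poisson point process $\{(t,W_t^{\sigma})\}$ equal in law to $\{(t,W_t)\}$, and Lemma \ref{lemma:existence limit density}(ii) gives $(W_t(G))^{\sigma} = W_t^{\sigma}(G^{\sigma})$; pushing $\sigma$ through \eqref{eq:PPP construction} then yields $(\mathbf{\Gamma}^*_{G,\omega})^{\sigma} \equalinlaw \mathbf{\Gamma}^*_{G^{\sigma},\omega}$, which is the exchangeability condition \eqref{eq:exch tps}. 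Finally (d) is the implication (i)$\Rightarrow$(ii) of Proposition \ref{prop:Feller equiv}. The only place where a genuine difficulty could hide is the regularity upgrade above, and it is also the step that truly requires the symmetry of $\omega$: without exchangeability one could easily have a finite pairwise rate but an infinite rate on some larger restriction, so the passage from \eqref{eq:regularity omega1} to \eqref{eq:regularity omega} is precisely where the hypothesis on $\omega$ is needed.
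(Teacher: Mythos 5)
Your proposal is correct and follows essentially the same route as the paper: the heart of both arguments is the union bound $\omega(\{W:W|_{[n]}\neq\idn\})\leq\binom{n}{2}\,\omega(\{W:W|_{[2]}\neq\idtwo\})<\infty$, obtained from exchangeability of $\omega$, which upgrades \eqref{eq:regularity omega1} to \eqref{eq:regularity omega} and makes each finite restriction a well-defined c\`adl\`ag exchangeable Markov chain, after which compatibility of the restrictions and Proposition \ref{prop:Feller equiv} deliver the Feller property. Your added remarks on where exchangeability is genuinely needed are accurate but not a departure from the paper's argument.
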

\begin{proof}
For each $n\in\Nb$, $\mathbf{\Gamma}^{*[n]}_{\omega}$ is the restriction of $\mathbf{\Gamma}^*_{\omega}$ to a process on $\graphsn$.
By construction, $(\mathbf{\Gamma}^{*[n]}_{\omega})_{n\in\Nb}$ is a compatible collection of Markov processes and, thus, determines a process $\mathbf{\Gamma}^*_{\omega}$ on $\graphsN$.  By exchangeability of $\omega$ and \eqref{eq:regularity omega1},
\begin{eqnarray*}
\omega(\{W\in\wireN:\,W|_{[n]}\neq\idn\})&=&\omega\left(\bigcup_{1\leq i<j\leq n}\{W\in\wireN:\,W|_{\{i,j\}}\neq\mathbf{Id}_{\{i,j\}}\}\right)\\
&\leq&\sum_{1\leq i<j\leq n}\omega(\{W\in\wireN:\,W|_{\{i,j\}}\neq\mathbf{Id}_{\{i,j\}}\})\\
&=&\frac{n(n-1)}{2}\omega(\{W\in\wireN:\,W|_{[2]}\neq\idtwo\})\\
&<&\infty,
\end{eqnarray*}
so that $\omega$ satisfies \eqref{eq:regularity omega}.  
Also by exchangeability of $\omega$, each $\mathbf{\Gamma}^{*[n]}_{\omega}$ is an exchangeable Markov chain on $\graphsn$.
The limiting process $\mathbf{\Gamma}^*_{\omega}$ is an exchangeable Feller process on $\graphsN$ by Proposition \ref{prop:Feller equiv}.
\end{proof}

\begin{cor}
Every exchangeable measure $\omega$ satisfying \eqref{eq:regularity omega1} determines the jump rates of an exchangeable Feller process on $\graphsN$.
\end{cor}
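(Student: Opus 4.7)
The plan is to observe that this corollary is essentially a direct consequence of Proposition \ref{prop:suff} together with the elementary properties of the Poisson point process underlying the construction \eqref{eq:PPP construction}. Given an exchangeable measure $\omega$ satisfying \eqref{eq:regularity omega1}, I would first invoke Proposition \ref{prop:suff} to produce the process $\mathbf{\Gamma}^*_{\omega}$ and to conclude that it is an exchangeable Feller process on $\graphsN$. What remains is to make precise the assertion that $\omega$ is in fact the \emph{jump rate measure} of this process.

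To that end, I would work at the level of finite restrictions. For each $n \in \Nb$, the chain $\mathbf{\Gamma}^{*[n]}_\omega$ lives on the finite state space $\graphsn$ and is built by letting the atoms $(t, W_t)$ of the Poisson point process $\mathbf{W}$ on $[0,\infty) \times \wireN$ act via $W_t|_{[n]}$ on the current state. By the mapping theorem for Poisson point processes, the induced point process of jumps of $\mathbf{\Gamma}^{*[n]}_\omega$ from state $F \in \graphsn$ to state $F' \neq F$ is itself Poisson with rate
\[
q^{[n]}(F, F') := \omega\left(\{W \in \wireN : W|_{[n]}(F) = F'\}\right),
\]
and the estimate
\[
\omega(\{W \in \wireN : W|_{[n]} \neq \idn\}) \leq \binom{n}{2}\, \omega(\{W \in \wireN : W|_{[2]} \neq \idtwo\}) < \infty
\]
from the proof of Proposition \ref{prop:suff} ensures that the total jump rate out of any state of $\mathbf{\Gamma}^{*[n]}_\omega$ is finite, so that $q^{[n]}$ is a genuine $Q$-matrix on $\graphsn$.

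Finally, I would observe that by construction the family $(q^{[n]})_{n \in \Nb}$ is compatible under restriction and exchangeable under the action of $\symmetricN$, so it determines the infinitesimal dynamics of $\mathbf{\Gamma}^*_{\omega}$ through its projective system of finite restrictions, in agreement with $\omega$. The only step requiring any real care is verifying that the set $\{W \in \wireN : W|_{[n]}(F) = F'\}$ is measurable with $\omega$-finite mass, which is immediate since it is contained in $\{W : W|_{[n]} \neq \idn\}$ whenever $F' \neq F$; no obstruction beyond this bookkeeping arises, so the corollary follows from Proposition \ref{prop:suff} with essentially one line of additional commentary.
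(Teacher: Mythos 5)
Your proposal is correct and takes essentially the same route as the paper: the corollary is stated there as an immediate consequence of Proposition \ref{prop:suff}, with the identification of $\omega$ as the jump rate measure via $Q(G,\cdot)=\omega(\{W\in\wireN:W(G)\in\cdot\})$ and the finiteness bound $\omega(\{W:W|_{[n]}\neq\idn\})\leq\binom{n}{2}\omega(\{W:W|_{[2]}\neq\idtwo\})<\infty$ made explicit in the surrounding discussion and in the proof of Theorem \ref{thm:Poisson}. Your additional care about the thinning property of the Poisson point process and the compatibility of the finite-dimensional rate matrices $(q^{[n]})_{n\in\Nb}$ is exactly the bookkeeping the paper carries out there.
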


By the Feller property, the transition law of each finite restriction $\Gbf^{[n]}$ is determined by the infinitesimal jump rates
\begin{equation}\label{eq:finite rates}
Q_n(F,F'):=\lim_{t\downarrow0}\frac{1}{t}\mathbb{P}\{\Gamma^{[n]}_t=F'\mid\Gamma^{[n]}_0=F\},\quad F,F'\in\graphsn,\quad F\neq F'.
\end{equation}
Exchangeability \eqref{eq:exch tps} and consistency \eqref{eq:consistent} of $\Gbf$ imply
\begin{align}\label{eq:exch rates}
Q_n(F^{\sigma},F'^{\sigma})&=Q_n(F,F')\quad\text{for all permutations }\sigma:[n]\rightarrow[n]\quad\text{and}\\
\label{eq:consistent rates}
Q_m(F|_{[m]},F')&=Q_n(F,\{F''\in\graphsn:\,F''|_{[m]}=F'\})=\sum_{F''\in\graphsn:\,F''|_{[m]}=F'}Q_n(F,F''),
\end{align}
 for all $F\in\graphsn$ and $F'\in\mathcal{G}_{[m]}$ such that $F|_{[m]}\neq F'$, $m\leq n$.
Thus, for every $G\in\graphsN$, $(Q_n)_{n\in\Nb}$ determines a pre-measure $Q(G,\cdot)$ on $\graphsN\setminus\{G\}$ by
\[Q(G,\{G'\in\graphsN:\,G'|_{[n]}=F'\}):=Q_n(G|_{[n]},F'),\quad F'\in\graphsn\setminus\{G|_{[n]}\},\quad n\in\Nb.\]
By \eqref{eq:consistent rates}, $Q(G,\cdot)$ is additive.
By Carath\'eodory's extension theorem, $Q(G,\cdot)$ has a unique extension to a measure on $\graphsN\setminus\{G\}$.

In $\mathbf{\Gamma}^*_{\omega}$ constructed above, $\omega$ determines the jump rates through
\[Q(G,\cdot)=\omega(\{W\in\wireN:W(G)\in \cdot\}),\quad G\in\graphsN\setminus\{G\}.\]
In fact, we can show that the infinitesimal rates of any exchangeable Feller process are determined by an exchangeable measure $\omega$ that satisfies \eqref{eq:regularity omega1}.  Our main theorem (Theorem \ref{thm:Levy-Ito}) gives a L\'evy--It\^o representation of this jump measure.  

\subsection{Existence of an exchangeable jump measure}\label{section:existence jump measure}
Let $(\mathbf{P}_t)_{t\geq0}$ be the Markov semigroup of an exchangeable Feller process $\mathbf{\Gamma}$ on $\graphsN$.
For every $t>0$, Theorems \ref{thm:discrete-rep} and \ref{thm:rewiring char} guarantee the existence of an exchangeable probability measure $\omega_t$ on $\wireN$ such that, for every $G\in\graphsN$,
\begin{equation}\label{eq:Omegat}\omega_t(\{W\in\wireN:W(G)\in\cdot\})=\mathbb{P}\{\Gamma_t\in\cdot\mid\Gamma_0=G\}.\end{equation}
By the relationship between $(\omega_t)_{t\geq0}$ and the time-homogeneous Markov process $(\Gamma_t)_{t\geq0}$, $\omega_t$ is exchangeable for every $t>0$, and the Chapman--Kolmogorov theorem implies that $(\omega_t)_{t>0}$ satisfies
\begin{equation}\label{eq:semigroup-omega}
\omega_{t+s}(\{W\in\wireN:\,W(G)\in\cdot\})
=\int_{\wireN}\omega_{t}(\{W'\in\wireN:(W'\circ W)(G)\in \cdot\})\omega_s(dW),
\end{equation}
for all $s,t\geq0$ and all $G\in\graphsN$.
The family $(\omega_t)_{t>0}$ is not determined by these conditions, but time-homogeneity and \eqref{eq:Omegat} allows us to choose a family $(\omega_t)_{t\geq0}$ that satisfies the further semigroup property
\[\omega_{t+s}(\cdot)=\int_{\wireN}\omega_t(\{W'\in\wireN:\,W'\circ W\in\cdot\})\omega_s(dW),\quad s,t\geq0.\]

By the Feller property, $W_t\sim\omega_t$ must also satisfy
\[\mathbf{P}_tg(G)=\mathbb{E}g(W_t(G))\rightarrow g(G)\quad\text{as }t\downarrow0\text{ for all }G\in\graphsN,\]
for all continuous $g:\graphsN\rightarrow\mathbb{R}$.  
Thus, as $t\downarrow0$, $W_t\rightarrow_{P}\id$ and $\omega_t\rightarrow_{w}\delta_{\{\id\}}$, the point mass at the identity map.  
(Here, $\rightarrow_P$ denotes convergence in probability and $\rightarrow_w$ denotes weak convergence of probability measures.)  By right-continuity at $t=0$, we have a family $(\omega_t)_{t\geq0}$ of measures on $\wireN$.

We obtain an infinitesimal jump measure $\omega$ on $\wireN\setminus\{\id\}$ through its  finite-dimensional restrictions $\omega^{(n)}$ on $\wiren\setminus\{\idn\}$:
\begin{equation}\label{eq:fidi jump rates}
\omega^{(n)}(V):=\lim_{t\downarrow0}\frac{1}{t}\omega_t(\{W\in\wireN:W|_{[n]}=V\}),\quad V\in\wiren\setminus\{\idn\},\quad n\in\Nb.\end{equation}

\begin{prop}\label{prop:existence omega}
The family of measures $(\omega^{(n)})_{n\in\Nb}$ in \eqref{eq:fidi jump rates} determines a unique exchangeable measure $\omega$ that satisfies \eqref{eq:regularity omega1}.
\end{prop}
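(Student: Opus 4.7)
The plan is to build $\omega$ as the Carath\'eodory extension of the family $(\omega^{(n)})_{n\in\Nb}$. The substantive step is to justify that the limit in \eqref{eq:fidi jump rates} actually exists and defines a finite exchangeable measure on each $\wiren\setminus\{\idn\}$; consistency, the extension itself, and the two regularity conditions of \eqref{eq:regularity omega1} then follow routinely.

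For existence, I would push the semigroup relation \eqref{eq:semigroup-omega} through the restriction map $W\mapsto W|_{[n]}$. Writing $\pi_t^{(n)}$ for the image of $\omega_t$ on $\wiren$ and using $(W'\circ W)|_{[n]}=W'|_{[n]}\circ W|_{[n]}$, one obtains the convolution semigroup relation $\pi_{t+s}^{(n)}=\pi_t^{(n)}*\pi_s^{(n)}$ on the finite monoid $(\wiren,\circ)$. The Feller property of $\mathbf{\Gamma}$ forces $\pi_t^{(n)}\to\delta_{\idn}$ as $t\downarrow 0$: for any $V\neq\idn$, pick $F\in\graphsn$ with $V(F)\neq F$ and bound $\pi_t^{(n)}(V)\leq\omega_t(\{W:W|_{[n]}(F)\neq F\})=\mathbb{P}\{\Gamma_t|_{[n]}\neq F\mid\Gamma_0|_{[n]}=F\}\to 0$. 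Equivalently, $(W_t|_{[n]})_{t\geq 0}$ with $W_t\sim\omega_t$ is a time-homogeneous Markov chain on the finite state space $\wiren$, starting at $\idn$, with transition kernel $p_t(V',\cdot):=\pi_t^{(n)}*\delta_{V'}$ satisfying $p_t(V',V')\geq\pi_t^{(n)}(\idn)\to 1$. Standard theory of continuous-time Markov chains on a finite state space then yields a rate matrix, giving
\[\omega^{(n)}(V)=\lim_{t\downarrow 0}t^{-1}\pi_t^{(n)}(V)\in[0,\infty),\qquad V\in\wiren\setminus\{\idn\},\]
with $\sum_{V\neq\idn}\omega^{(n)}(V)<\infty$.

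Each $\omega^{(n)}$ inherits exchangeability under $\mathfrak{S}_{[n]}$ from that of $\omega_t$. Consistency follows from the disjoint decomposition $\{W:W|_{[n]}=V\}=\bigsqcup_{V'\in\mathcal{W}_{[n+1]}:\,V'|_{[n]}=V}\{W:W|_{[n+1]}=V'\}$: when $V\neq\idn$, every $V'$ appearing in the union is also non-identity, so dividing by $t$ and passing to the limit gives $\omega^{(n)}(V)=\sum_{V':\,V'|_{[n]}=V}\omega^{(n+1)}(V')$. Since the sets $\{W:W|_{[n]}\neq\idn\}$ form a countable increasing cover of $\wireN\setminus\{\id\}$ by sets of finite pre-measure, Carath\'eodory's extension theorem produces a unique $\sigma$-finite Borel measure $\omega$ on $\wireN\setminus\{\id\}$, extended by $\omega(\{\id\}):=0$ to a measure on $\wireN$. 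Exchangeability passes to $\omega$, and \eqref{eq:regularity omega1} holds since $\omega(\{\id\})=0$ by construction and $\omega(\{W:W|_{[2]}\neq\idtwo\})=\sum_{V\in\mathcal{W}_{[2]}\setminus\{\idtwo\}}\omega^{(2)}(V)<\infty$ because $\mathcal{W}_{[2]}$ is finite.

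The main obstacle is the existence step. A tempting but insufficient shortcut would be to read $\omega^{(n)}$ off the rate matrix of the finite CTMC $\mathbf{\Gamma}^{[n]}$ on $\graphsn$; this fails because distinct $V\in\wiren$ can yield the same image $V(F)$ from a given $F$, so $\omega^{(n)}$ is strictly finer than the induced rates on $\graphsn$ (already at $n=2$, where $|\mathcal{W}_{[2]}|=4$, the linear map from $\pi_t^{(2)}$ to the joint law of $(\Gamma_0^{[2]},\Gamma_t^{[2]})$ under $\Gamma_0\sim\varepsilon_{1/2}$ has a nontrivial kernel). The resolution is precisely the lift to the rewiring chain $(W_t|_{[n]})_{t\geq 0}$ itself, which is a CTMC on the finite monoid $\wiren$ via the convolution structure of $(\omega_t)$, and whose rate matrix is exactly $\omega^{(n)}$ on the off-diagonal.
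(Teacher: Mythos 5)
Your proposal is correct, and its overall skeleton---define $\omega$ on the generating $\pi$-system of cylinder events via the finite-level rates $\omega^{(n)}$, check consistency, and invoke Carath\'eodory---is the same as the paper's. Where you differ is in how much work you put into the existence and finiteness of the limits in \eqref{eq:fidi jump rates}. The paper disposes of this in one line, asserting that ``$\omega^{(n)}$ determines the jump rates of an exchangeable Markov chain on $\graphsn$'' and hence is finite; the existence of the limit is left implicit in the preceding discussion of the semigroup $(\omega_t)_{t\geq0}$ and the finite-state-space structure. You instead make this step explicit by pushing the semigroup identity \eqref{eq:semigroup-omega} through restriction, using that $(W'\circ W)|_{[n]}=W'|_{[n]}\circ W|_{[n]}$ (composition of rewiring maps is entrywise, so this is valid), to obtain a convolution semigroup $\pi_t^{(n)}$ on the finite monoid $(\wiren,\circ)$ that is standard at $t=0$ by the Feller property; finite-state Markov chain theory then yields the off-diagonal rates directly. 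This is a genuine improvement in rigor, and your cautionary remark is well taken: the linear map from $\pi_t^{(n)}$ to the transition law of $\Gbf^{[n]}$ on $\graphsn$ does have a nontrivial kernel (your $n=2$ rank computation is right), so one cannot simply read $\omega^{(n)}$ off the rate matrix of the graph chain; one genuinely needs the lifted chain on $\wiren$, or else an argument covering $\wiren\setminus\{\idn\}$ by the sets $\{V:V(F)\neq F\}$, $F\in\graphsn$, to transfer finiteness back. Your treatment of consistency (every $V'\in\mathcal{W}_{[n+1]}$ restricting to $V\neq\idn$ is itself non-identity, so the finite sum commutes with the limit) and of the regularity conditions \eqref{eq:regularity omega1} matches what the paper needs and asserts.
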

\begin{proof}
The Borel $\sigma$-field $\sigma\left\langle\bigcup_{n\in\mathbb{N}}\wiren\right\rangle$ is generated by the $\pi$-system of events
\[\{W\in\wireN:W|_{[n]}=V\},\quad V\in\wiren,\quad n\in\mathbb{N}.\]
For every $n\in\mathbb{N}$ and $V\in\wiren$, we define 
\[\omega(\{W\in\wireN:W|_{[n]}=V\})=\omega^{(n)}(V).\]
By construction, $(\omega^{(n)})_{n\in\Nb}$ is consistent and satisfies
\[\omega^{(m)}(V)=\sum_{V'\in\wiren:V'|_{[m]}=V}\omega^{(n)}(V')\]
for every $m\leq n$ and $V\in\mathcal{W}_{[m]}\setminus\{\mathbf{Id}_m\}$.
Therefore, the function $\omega(\{W\in\wireN:W|_{[m]}=V\})=\omega^{(m)}(V)$ is additive and  Carath\'eodory's extension theorem guarantees a unique extension to a measure on $\wireN\setminus\{\id\}$.  

For each $n\in\mathbb{N}$, $\omega^{(n)}$ determines the jump rates of an exchangeable Markov chain on $\graphsn$, and so $\omega^{(n)}(\wiren\setminus\{\idn\})<\infty$ for all $n\in\mathbb{N}$.  Specifying $\omega(\{\id\})=0$ gives \eqref{eq:regularity omega1}.
Exchangeability of every $\omega_t$, $t\geq0$, makes each $\omega^{(n)}$, $n\in\Nb$, exchangeable and, hence, implies $\omega$ is exchangeable.
\end{proof}

\vspace{2mm}

\noindent{\bf Theorem \ref{thm:Poisson}}. (Poissonian construction).  {\em
Let $\mathbf{\Gamma}=\{\Gbf_G:\,G\in\graphsN\}$ be an exchangeable Feller process on $\graphsN$. 
Then there exists an exchangeable measure $\omega$ satisfying \eqref{eq:regularity omega} such that $\mathbf{\Gamma}\equalinlaw\Gbf_{\omega}^*$, as constructed in \eqref{eq:PPP construction}.
}

\begin{proof}
Let $\omega$ be the exchangeable measure from Proposition \ref{prop:existence omega} and let $\mathbf{W}=\{(t,W_t)\}$ be a Poisson point process with intensity $dt\otimes\omega$.
Since $\omega$ satisfies \eqref{eq:regularity omega1}, Proposition \ref{prop:suff} allows us to construct $\mathbf{\Gamma}^*_{\omega}$ from $\mathbf{W}$ as in \eqref{eq:PPP construction}.  
The jump rates of each $\mathbf{\Gamma}^{*[n]}_{\omega}$ are determined by a thinned version $\mathbf{W}^{[n]}$ of $\mathbf{W}$ that only maintains the atoms $(t,W_t)$ for which $W_t|_{[n]}\neq\idn$.  By the thinning property of Poisson random measures (e.g.\ \cite{KallenbergRM}), the intensity of $\mathbf{W}^{[n]}$ is $\omega^{(n)}$, as defined in \eqref{eq:fidi jump rates}, and it follows immediately that the jump rate from $F$ to $F'\neq F$ in $\mathbf{\Gamma}^{*[n]}_{\omega}$ is
\[\omega^{(n)}(\{W\in\wiren:W(F)=F'\})=Q_n(F,F'),\]
for $Q_n(\cdot,\cdot)$ in \eqref{eq:finite rates}.
Furthermore,
\[Q_n(F,\graphsn\setminus\{F\})=\omega^{(n)}(\{W\in\wiren:W(F)\neq F\})<\infty\]
for all $F\in\graphsn$, $n\in\Nb$.
By construction, $(\mathbf{\Gamma}^{*[n]}_{\omega})_{n\in\Nb}$ is a compatible collection of c\`adl\`ag exchangeable Markov chains governed by the finite-dimensional transition law of $\mathbf{\Gamma}$.
The proof is complete.
\end{proof}

\subsubsection{Standard $\omega$-process}
For $\omega$ satisfying \eqref{eq:regularity omega}, we define the {\em standard $\omega$-process}  $\mathbf{W}^*_{\omega}:=(W_t^*)_{t\geq0}$  on $\wireN$ as the $\wireN$-valued Markov process with $W_0^*=\id$ and infinitesimal jump rates
\[Q(W,dW'):=\omega(\{W''\in\wireN:W''\circ W\in dW'\}),\quad W\neq W'\in\wireN.\]
Associativity of the rewiring maps (Lemma \ref{lemma:existence limit density}(i)) makes $\mathbf{W}=\{\mathbf{W}_V:\,V\in\wireN\}$ a consistent Markov process, where $\mathbf{W}_V:=(W_t^{*}\circ V)_{t\geq0}$ for each $V\in\wireN$.
Thus, an alternative description to the construction of $\Gbf^{*}_{\omega}=\{\Gbf^*_{G,\omega}:\,G\in\graphsN\}$ in \eqref{eq:PPP construction} is to put $\Gbf_{G,\omega}^*=(W_t^*(G))_{t\geq0}$, where $\mathbf{W}^*_{\omega}$ is a standard $\omega$-process.

\begin{cor}\label{cor:Poisson}
Let $\mathbf{\Gamma}=\{\Gbf_G:\,G\in\graphsN\}$ be an exchangeable Feller process on $\graphsN$. 
Then there exists an exchangeable measure $\omega$ satisfying \eqref{eq:regularity omega} such that $\Gbf_G\equalinlaw(W_t^*(G))_{t\geq0}$ for every $G\in\graphsN$, where $\mathbf{W}^*_{\omega}=(W_t^*)_{t\geq0}$ is a standard $\omega$-process.
\end{cor}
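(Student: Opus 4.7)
The plan is to deduce the corollary directly from Theorem~\ref{thm:Poisson} by identifying the two Poissonian constructions. First, Theorem~\ref{thm:Poisson} supplies an exchangeable measure $\omega$ satisfying \eqref{eq:regularity omega1} together with a Poisson point process $\mathbf{W}=\{(t,W_t)\}\subset[0,\infty)\times\wireN$ of intensity $dt\otimes\omega$ such that the construction \eqref{eq:PPP construction} yields $\Gbf\equalinlaw\Gbf_{\omega}^{*}$. It therefore suffices to build, from the same $\mathbf{W}$, a standard $\omega$-process $\mathbf{W}^{*}_{\omega}=(W_t^{*})_{t\geq 0}$ on $\wireN$ such that $\Gbf_{G,\omega}^{*}=(W_t^{*}(G))_{t\geq 0}$ pathwise, for every $G\in\graphsN$.

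Define $\mathbf{W}^{*}_{\omega}$ through its finite restrictions: for each $n\in\Nb$, let $\mathbf{W}^{[n]}$ be the thinning of $\mathbf{W}$ retaining only the atoms $(t,W_t)$ with $W_t|_{[n]}\neq\idn$. By \eqref{eq:regularity omega1} and the exchangeability argument used in the proof of Proposition~\ref{prop:suff}, this thinning has finite intensity, so its atoms can be enumerated in increasing order of time as $(t^{(n)}_1,W_{t^{(n)}_1})$, $(t^{(n)}_2,W_{t^{(n)}_2})$, etc. Put $W_0^{*[n]}:=\idn$ and, at the $k$-th atom time, set $W_{t^{(n)}_k}^{*[n]}:=W_{t^{(n)}_k}|_{[n]}\circ W_{t^{(n)}_{k-1}}^{*[n]}$, extended right-continuously between atom times. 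Associativity of composition (Lemma~\ref{lemma:existence limit density}(i)) together with the fact that $W|_{[m]}=\idm$ whenever $W|_{[n]}=\idn$ and $m\leq n$ ensures that the family $(\mathbf{W}^{*[n]}_{\omega})_{n\in\Nb}$ is compatible under restriction, and hence determines a unique $\wireN$-valued process $\mathbf{W}^{*}_{\omega}=(W_t^{*})_{t\geq 0}$ with $W_0^{*}=\id$. Each $\mathbf{W}^{*[n]}_{\omega}$ is a continuous-time Markov chain on $\wiren$ whose jump rate from $V$ to $V'\neq V$ equals $\omega(\{W:W|_{[n]}\circ V=V'\})$, matching the infinitesimal rates prescribed in the definition of the standard $\omega$-process.

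Finally, the Lipschitz property (Lemma~\ref{lemma:existence limit density}(i)) gives, for every $G\in\graphsN$ and $n\in\Nb$,
\[
(W_t^{*}(G))|_{[n]}=W_t^{*[n]}(G|_{[n]}),\quad t\geq 0,
\]
and the right-hand side coincides pathwise with the restriction $\Gamma_t^{*[n]}$ constructed from the same Poisson atoms in \eqref{eq:PPP construction}. Taking the projective limit over $n$ yields $(W_t^{*}(G))_{t\geq 0}=\Gbf_{G,\omega}^{*}$ a.s., so the equality in law transported from Theorem~\ref{thm:Poisson} gives $\Gbf_{G}\equalinlaw(W_t^{*}(G))_{t\geq 0}$ for every $G\in\graphsN$.

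The only mild subtlety is the well-definedness of $\mathbf{W}^{*}_{\omega}$ on all of $\wireN$ despite the possibly infinite total mass of $\omega$; this is handled entirely by the finite-mass estimate \eqref{eq:regularity omega1} and the consistency under restriction, exactly as in Proposition~\ref{prop:suff}. Beyond that step, the result is essentially a rewording of Theorem~\ref{thm:Poisson}, recasting the Poisson-driven edge updates as successive compositions carried out once and for all on the full rewiring map.
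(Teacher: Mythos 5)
Your proposal is correct and follows essentially the same route as the paper: the paper also obtains the corollary by taking the $\omega$ and Poisson point process from Theorem~\ref{thm:Poisson} and observing, via associativity of rewiring maps (Lemma~\ref{lemma:existence limit density}(i)) and the compatibility of finite restrictions, that the construction \eqref{eq:PPP construction} can be rewritten as $\Gbf^*_{G,\omega}=(W_t^*(G))_{t\geq0}$ for a standard $\omega$-process built from the same atoms. Your write-up merely spells out the thinning, the pathwise identity $(W_t^*(G))|_{[n]}=W_t^{*[n]}(G|_{[n]})$, and the projective-limit step in more detail than the paper does.
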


\subsection{L\'evy--It\^o representation}

Our main theorem refines Theorem \ref{thm:Poisson} with an explicit L\'evy--It\^o-type characterization of any exchangeable $\omega$ satisfying \eqref{eq:regularity omega1}.
The representation entails a few special measures, which we now introduce.

\subsubsection{Single edge updates}

For $j>i\geq1$ and $k\in\{0,1\}$, we define $\rho^{ij}_{k}:\graphsN\rightarrow\graphsN$ by $G\mapsto G'=\rho^{ij}_{k}(G)$, where
\[
G'^{i'j'}=\left\{\begin{array}{cc}
G^{i'j'},& i'j'\neq ij\\
k,& i'j'=ij.\end{array}\right.\]
In words, $G'=\rho^{ij}_{k}(G)$ coincides with $G$ at every edge except $\{i,j\}$: irrespective of $G^{ij}$, $\rho^{ij}_k$ puts $G'^{ij}=k$.  We call $\rho^{ij}_{k}$ a {\em single edge update map} and define the {\em single edge update measures} by
\begin{equation}\label{eq:reassignment measure}
\epsilon_{k}(\cdot):=\sum_{1\leq i< j<\infty}\delta_{\rho^{ij}_{k}}(\cdot),\quad k=0,1,\end{equation}
which place unit mass at each single edge update map $\rho_k^{ij}$.

\subsubsection{Single vertex updates}\label{section:exile measure}
For any vertex $i\in\mathbb{N}$ and $G\in\graphsN$, the sequence of edges adjacent to $i$  is an infinite $\{0,1\}$-valued sequence $(G^{ij})_{j\neq i}$.
Let $x_0=x_0^1x_0^2\cdots$ and $x_1=x_1^1x_1^2\cdots$ be infinite sequences in $\{0,1\}$ and let $i\in\mathbb{N}$ be any distinguished vertex.
For $x=(x_0,x_1)$ and $i\in\mathbb{N}$, we define $v_x^i:\graphsN\rightarrow\graphsN$  by $G'=v_x^i(G)$, where
\begin{equation}\label{eq:vertex map}
G'^{i'j'}=\left\{\begin{array}{cc} G^{i'j'},& i'\neq i\text{ and }j'\neq i\\
x_0^{j'},& i'=i\text{ and }G^{i'j'}=0\\
x_0^{i'},& j'=i\text{ and }G^{i'j'}=0\\
x_1^{j'},& i'=i\text{ and }G^{i'j'}=1\\
x_1^{i'},& j'=i\text{ and }G^{i'j'}=1
\end{array}\right.,\quad i'\neq j'.
\end{equation}
We call $v_x^i$ a {\em single vertex update map}, as it affects only edges incident to the distinguished vertex $i$.

When $\Gamma$ is the realization of an exchangeable random graph, the sequence $(\Gamma^{ij})_{j\neq i}$ is exchangeable for all fixed $i\in\Nb$.
We ensure that the resulting $\Gamma'=v_x^{i}(\Gamma)$ is exchangeable by choosing $x$ from an exchangeable probability distribution on pairs $X=(X_0,X_1)$ of infinite binary sequences.
Let $\stoch$ denote the space of $2\times 2$ stochastic matrices equipped with the Borel $\sigma$-field, i.e., each $S\in\stoch$ is a matrix $(S_{ii'})_{i,i'=0,1}$ such that
\begin{itemize}
\item $S_{ii'}\geq0$ for all $i,i'=0,1$ and
\item $S_{i0}+S_{i1}=1$ for $i=0,1$.
\end{itemize}
Therefore, each row of $S\in\stoch$ determines a probability measure on $\{0,1\}$.
From a probability measure $\Sigma$ on $\stoch$, we write $W\sim\Omega^{(i)}_{\Sigma}$ to denote the probability measure of a random rewiring map $W\equalinlaw v_{X}^{i}$ constructed by taking $S\sim\Sigma$ and, given $S$, generating $X_0$ and $X_1$ independently of one another according to
\begin{align*}
\mathbb{P}\{X_0^j=k\mid S\}&=S_{0k},\quad k=0,1\quad\text{and}\\
\mathbb{P}\{X_1^j=k\mid S\}&=S_{1k},\quad k=0,1,
\end{align*}
independently for every $j=1,2,\ldots$.
We then define $W=v_X^i$ as in \eqref{eq:vertex map}.
We define the {\em single vertex update measure directed by $\Sigma$} by
\begin{equation}\label{eq:single vertex update}
\Omega_{\Sigma}(\cdot):=\sum_{i=1}^{\infty}\Omega^{(i)}_{\Sigma}(\cdot).\end{equation}

For the reader's convenience, we restate Theorem \ref{thm:Levy-Ito}, whose proof relies on Lemmas \ref{lemma:limit exists}, \ref{lemma:2}, and \ref{lemma:3} from Section \ref{section:key lemmas} below.

\vspace{2mm}

\noindent{\bf Theorem \ref{thm:Levy-Ito}} (L\'evy--It\^o representation). {\em
Let $\Gbf_{\omega}^*=\{\Gbf_{G,\omega}^*:\,G\in\graphsN\}$ be an exchangeable Feller process constructed in \eqref{eq:PPP construction} based on intensity $\omega$ satisfying \eqref{eq:regularity omega}.
Then there exist unique constants $\mathbf{e}_0,\mathbf{e}_1,\mathbf{v}\geq0$, a unique probability measure $\Sigma$ on $2\times2$ stochastic matrices, and a unique measure $\Upsilon$ on $\limitwiredensities$ satisfying
\begin{equation}\label{eq:regularity upsilon}
\Upsilon(\{\mathbf{I}\})=0\quad\text{and}\quad\int_{\limitwiredensities}(1-\upsilon_*^{(2)})\Upsilon(d\upsilon)<\infty
\end{equation}
such that 
\begin{equation}\omega=\Omega_{\Upsilon}+\mathbf{v}\Omega_{\Sigma}+\mathbf{e}_0\epsilon_0+\mathbf{e}_1\epsilon_1.\label{eq:Levy-Ito measure}\end{equation}
}

\subsection{Key lemmas}\label{section:key lemmas}
In the following three lemmas, we always assume that $\omega$ is an exchangeable measure satisfying \eqref{eq:regularity omega1}.
For $n\in\Nb$, we define 
\begin{equation}\label{eq:omega-n}
\omega_n:=\omega\mathbf{1}_{\{W\in\wireN:\,W|_{[n]}\neq\idn\}}\end{equation}
as the restriction of $\omega$ to the event $\{W\in\wireN:\,W|_{[n]}\neq\idn\}$.
While $\omega$ can have infinite mass, the right-hand side of \eqref{eq:regularity omega1} assures that $\omega_n$ is finite for all $n\geq2$.
Exchangeability of $\omega$ implies that $\omega_n$ is invariant with respect to all finite permutations $\mathbb{N}\rightarrow\mathbb{N}$ that fix $[n]$.
Thus, we recover a finite, exchangeable measure from $\omega_n$ by defining the {\em $n$-shift} of $W=(W^{ij})_{i,j\geq1}$ by $\overleftarrow{W}_n:=(W^{n+i,n+j})_{i,j\geq1}$ and letting $\overleftarrow{\omega}_n$ be the image of $\omega_n$ by the $n$-shift, i.e.,
\begin{equation}\label{eq:n-shift measure}
\overleftarrow{\omega}_n(\cdot):=\omega_n(\{W\in\wireN:\,\overleftarrow{W}_n\in\cdot\}).\end{equation}

\begin{lemma}\label{lemma:limit exists}
Suppose $\omega$ is exchangeable and satisfies \eqref{eq:regularity omega1}.
Then $\omega$-almost every $W\in\wireN$ possesses a rewiring limit $|W|$.
\end{lemma}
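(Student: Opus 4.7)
The plan is to reduce the $\sigma$-finite case to the finite exchangeable case treated in Lemma~\ref{lemma:existence limit density}(iii) by exploiting the $n$-shift construction already introduced in the paper. First I would observe that $\omega$ is $\sigma$-finite: the sets $A_n:=\{W\in\wireN:W|_{[n]}\neq\idn\}$ are increasing in $n$, each has finite $\omega$-measure by the computation in the proof of Proposition~\ref{prop:suff} (bounded by $\binom{n}{2}\omega(\{W:W|_{[2]}\neq\idtwo\})<\infty$), and $\wireN\setminus\{\id\}=\bigcup_nA_n$ while $\omega(\{\id\})=0$. So it suffices to show that for each $n\ge2$, the finite measure $\omega_n=\omega\mathbf{1}_{A_n}$ assigns zero mass to the set $B:=\{W\in\wireN:|W|\text{ does not exist}\}$.

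Next I would use the $n$-shift $\tau_n:W\mapsto\overleftarrow{W}_n=(W^{n+i,n+j})_{i,j\ge1}$ defined in \eqref{eq:n-shift measure}. Since $\omega$ is exchangeable and $A_n$ is invariant under permutations of $\Nb$ fixing $[n]$, $\omega_n$ is invariant under permutations fixing $[n]$, and consequently its push-forward $\overleftarrow{\omega}_n=\omega_n\circ\tau_n^{-1}$ is an exchangeable finite measure on $\wireN$. Normalizing to the probability measure $\overleftarrow{\omega}_n/\overleftarrow{\omega}_n(\wireN)$ and applying Lemma~\ref{lemma:existence limit density}(iii) yields $\overleftarrow{\omega}_n(B)=0$.

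The key link between these two statements is that $|W|=|\overleftarrow{W}_n|$ (with existence of one equivalent to existence of the other). To see this, fix $V\in\wirek$. Any injection $\phi:[k]\to[N]$ either takes values entirely in $\{n+1,\ldots,N\}$, in which case $\phi-n$ is an injection $[k]\to[N-n]$ with $(\overleftarrow{W}_n)^{\phi-n}=V$ iff $W^{\phi}=V$, or it hits $[n]$, contributing at most $kn(N-1)^{\downarrow(k-1)}=O(N^{k-1})$ terms. Hence
\[
\ind(V,W|_{[N]})=\ind(V,\overleftarrow{W}_n|_{[N-n]})+O(N^{k-1}),
\]
and dividing by $N^{\downarrow k}\sim N^k$ and sending $N\to\infty$ shows that the densities $\delta(V,W|_{[N]})$ and $\delta(V,\overleftarrow{W}_n|_{[N-n]})$ have the same limiting behaviour for every $V$, proving $|W|=|\overleftarrow{W}_n|$. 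Therefore $B=\tau_n^{-1}(B)$ up to the trivial identification, giving $\omega_n(B)=\overleftarrow{\omega}_n(B)=0$.

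Finally, letting $n\to\infty$, monotone convergence of measures on the increasing family $A_n\uparrow\wireN\setminus\{\id\}$ combined with $\omega(\{\id\})=0$ yields $\omega(B)=\lim_n\omega(B\cap A_n)=\lim_n\omega_n(B)=0$, which is the desired conclusion. The step I expect to require the most care is the counting identity linking $|W|$ to $|\overleftarrow{W}_n|$: the $O(N^{k-1})$ bound for injections meeting $[n]$ is elementary but must be checked uniformly in $V$ and $n$ so that the limiting densities really do agree, rather than just being simultaneously convergent; once that is in hand, the rest of the argument is essentially bookkeeping.
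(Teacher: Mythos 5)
Your proposal is correct and follows essentially the same route as the paper's proof: restrict $\omega$ to the finite-mass events $\{W:W|_{[n]}\neq\idn\}$, push forward by the $n$-shift to obtain a finite exchangeable measure to which Lemma~\ref{lemma:existence limit density}(iii) applies, and conclude by monotone convergence using $\omega(\{\id\})=0$. The only difference is that you spell out the counting argument behind the identity $|W|=|\overleftarrow{W}_n|$, which the paper simply asserts as ``the rewiring limit of $W$ depends only on $\overleftarrow{W}_n$''; your $O(N^{k-1})$ estimate for injections meeting $[n]$ is a correct justification of that step.
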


\begin{proof}
Let $\omega_n$ be the finite measure defined in \eqref{eq:omega-n}.
The image measure $\overleftarrow{\omega}_n$ in \eqref{eq:n-shift measure} is finite, exchangeable, and, therefore, proportional to an exchangeable probability measure on $\wireN$.
By Lemma  \ref{lemma:existence limit density}(iii), $\overleftarrow{\omega}_n$-almost every $W\in\wireN$ possesses a rewiring limit.
Since the rewiring limit of $W\in\wireN$ depends only on $\overleftarrow{W}_n$, for every $n\in\mathbb{N}$, we conclude that $\omega_n$-almost every $W\in\wireN$ possesses a rewiring limit.
Finally, as $n\rightarrow\infty$,
\[\omega_n\uparrow\omega_{\infty}=\omega\mathbf{1}_{\{W\in\wireN:\,W\neq\id\}},\]
the restriction of $\omega$ to $\{W\in\wireN:\,W\neq\id\}$.
By the left-hand side of \eqref{eq:regularity omega1}, $\omega$ assigns zero mass to $\{W\in\wireN:\,W=\id\}$, and so $\omega_{\infty}=\omega$.
The monotone convergence theorem now implies that $\omega$-almost every $W\in\wireN$ possesses a rewiring limit.  
The proof is complete.
\end{proof}

\begin{lemma}\label{lemma:2}
Suppose $\omega$ is exchangeable,  satisfies \eqref{eq:regularity omega1}, and $\omega$-almost every $W\in\wireN$ has $|W|\neq\mathbf{I}$.
Then $\omega=\Omega_{\Upsilon}$ for some unique measure $\Upsilon$ satisfying \eqref{eq:regularity upsilon}.
\end{lemma}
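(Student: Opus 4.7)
The natural candidate is $\Upsilon := |\omega|$, the pushforward of $\omega$ under the rewiring-limit map $|\cdot| : \wireN \to \limitwiredensities \cup \{\partial\}$. By Lemma \ref{lemma:limit exists}, $|W|$ exists for $\omega$-a.e.\ $W$, so $\Upsilon$ lives on $\limitwiredensities$, and the hypothesis $|W| \neq \mathbf{I}$ $\omega$-a.s.\ gives $\Upsilon(\{\mathbf{I}\}) = 0$. Note that $\omega$ is $\sigma$-finite, being exhausted by the finite measures $\omega_n$ appearing in the proof of Lemma \ref{lemma:limit exists}, and hence $\Upsilon$ is $\sigma$-finite as well.

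The central step is the identity $\omega = \Omega_{\Upsilon}$. My plan is to disintegrate $\omega$ along the fibers of $|\cdot|$. Since $\wireN$ is Polish and both $\omega$ and $\Upsilon$ are $\sigma$-finite, the disintegration theorem yields probability kernels $(\omega_\upsilon)_{\upsilon \in \limitwiredensities}$ such that $\omega = \int \omega_\upsilon\,\Upsilon(d\upsilon)$ and $\omega_\upsilon(\{W : |W| = \upsilon\}) = 1$ for $\Upsilon$-a.e.\ $\upsilon$. It then suffices to identify $\omega_\upsilon = \Omega_\upsilon$ for $\Upsilon$-a.e.\ $\upsilon$. This happens in two sub-steps: first, the $\symmetricN$-invariance of $\omega$ combined with $|W^\sigma| = |W|$ (Lemma \ref{lemma:existence limit density}(ii)) forces each $\omega_\upsilon$ to be an exchangeable probability measure on $\wireN$; second, an exchangeable probability measure supported on the single fiber $\{W : |W| = \upsilon\}$ must coincide with $\Omega_\upsilon$ by the essential-uniqueness half of Lemma \ref{lemma:existence limit density}(iii) (equivalently, by the Aldous--Hoover-driven ergodicity of $\Omega_{\upsilon}$, which via Proposition \ref{prop:mixing measure discrete} forces the mixing measure to be $\delta_\upsilon$).

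Once $\omega = \Omega_{\Upsilon}$ is in hand, the regularity condition \eqref{eq:regularity upsilon} falls out by change of variables: Lemma \ref{lemma:existence limit density}(iii) gives $\Omega_{\upsilon}(\{W : W|_{[2]} \neq \idtwo\}) = 1 - \upsilon_*^{(2)}$, and so
\[\int_{\limitwiredensities}(1-\upsilon_*^{(2)})\,\Upsilon(d\upsilon) = \Omega_{\Upsilon}(\{W : W|_{[2]} \neq \idtwo\}) = \omega(\{W : W|_{[2]} \neq \idtwo\}) < \infty\]
by \eqref{eq:regularity omega1}. Uniqueness is then automatic: any alternative $\Upsilon'$ with $\omega = \Omega_{\Upsilon'}$ must equal $|\omega|$ upon pushing forward through $|\cdot|$, using that $\Omega_{\upsilon}$-a.e.\ $W$ has $|W| = \upsilon$.

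The main obstacle I foresee is verifying the exchangeability of the disintegration kernel $\omega_\upsilon$, because the disintegration theorem only determines $\omega_\upsilon$ up to a $\Upsilon$-null set that a priori depends on the permutation $\sigma \in \symmetricN$ against which one wants to test invariance. The standard resolution is to choose a countable generating family of permutations (say all finite permutations of $\mathbb{N}$), collect a single common null set over this countable family, and then upgrade to full $\symmetricN$-invariance by a density argument on the compact group of permutations acting on the cylinder $\sigma$-algebra.
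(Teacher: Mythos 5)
Your strategy---push $\omega$ forward to $\Upsilon:=|\omega|$, disintegrate $\omega$ over $\Upsilon$ into probability kernels concentrated on the fibers of $|\cdot|$, and identify each kernel with $\Omega_{\upsilon}$ via exchangeability and Proposition \ref{prop:mixing measure discrete}---is genuinely different from the paper's. The paper never disintegrates an infinite measure: it restricts $\omega$ to the finite pieces $\omega_n=\omega\mathbf{1}_{\{W|_{[n]}\neq\idn\}}$, shifts them to recover full exchangeability, applies Proposition \ref{prop:mixing measure discrete} to each finite (hence normalizable) piece, and recovers $\omega=\Omega_{\Upsilon}$ by monotone convergence on a generating $\pi$-system. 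Your route is more direct, and your treatment of the null-set issue for exchangeability of the kernels (a common null set over the countably many finite permutations, then a density argument) is the right fix; the identification of each kernel with $\Omega_\upsilon$ and the uniqueness of $\Upsilon$ are also fine.

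There is, however, one load-bearing gap: the sentence ``$\omega$ is $\sigma$-finite \ldots\ and hence $\Upsilon$ is $\sigma$-finite as well.'' The pushforward of a $\sigma$-finite measure need not be $\sigma$-finite (Lebesgue measure on $\mathbb{R}^2$ projected to a coordinate is the standard counterexample); here the sets of finite $\omega$-mass, namely $\{W:W|_{[n]}\neq\idn\}$, are not unions of fibers of $|\cdot|$, so they give no direct control on $\Upsilon$. This matters because disintegration into \emph{probability} kernels is precisely what fails when a fiber carries infinite mass, i.e.\ when $\Upsilon$ is not $\sigma$-finite, so your central step presupposes a fact you have not established. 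Moreover, the natural source of $\sigma$-finiteness is the bound $\Upsilon(\{\upsilon:1-\upsilon_*^{(2)}>1/k\})\leq k\int(1-\upsilon_*^{(2)})\Upsilon(d\upsilon)$ together with the observation that $\{\upsilon:\upsilon_*^{(2)}=1\}=\{\mathbf{I}\}$ is $\Upsilon$-null---but you derive the finiteness of that integral \emph{from} $\omega=\Omega_{\Upsilon}$, so as ordered the argument is circular. The repair is to prove $\int(1-\upsilon_*^{(2)})\Upsilon(d\upsilon)<\infty$ first and independently, which is exactly what the paper does: conditioning the finite shifted measure $\overleftarrow{\omega}_n$ on $|W|=\upsilon$ gives $\int(1-\upsilon_*^{(2)})\,|\omega_n|(d\upsilon)=\omega_n(A_n)\leq\omega(\{W:W|_{[2]}\neq\idtwo\})<\infty$ uniformly in $n$, and monotone convergence finishes. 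With that step inserted before the disintegration, your proof goes through.
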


\begin{proof}
Let $\omega_n$ be as in \eqref{eq:omega-n} and for any measurable set $A\subseteq\wireN$ let $\omega_n(\cdot\mid A)$ denote the measure conditional on the event $A$.
For fixed $\upsilon\in\limitwiredensities$ and $A_n:=\{W\in\wireN:\,\overleftarrow{W}_n|_{[2]}\neq\mathbf{Id}_{[2]}\}$, $\omega_n$ satisfies
\begin{eqnarray*}
\omega_n(A_n\mid |W|=\upsilon)&=&\overleftarrow{\omega}_n(\{W|_{[2]}\neq\idtwo\}\mid |W|=\upsilon)\\
&=&\Omega_{\upsilon}(\{W|_{[2]}\neq\idtwo\})\\
&=&1-\upsilon_*^{(2)},
\end{eqnarray*}
where $\upsilon^{(2)}_*$ is the component of $\upsilon\in\limitwiredensities$ corresponding to $\mathbf{Id}_{[2]}$.
By Lemma \ref{lemma:limit exists}, the image of $\omega_n$ by taking rewiring limits, denoted $|\omega_n|$, is well-defined and
\[\omega_n(A_n)=\int_{\wireN}(1-\upsilon_*^{(2)})\omega_n(|W|\in d\upsilon)=\int_{\limitwiredensities}(1-\upsilon^{(2)}_*)|\omega_n|(d\upsilon).\]
By the monotone convergence theorem, $|\omega_n|$ converges to a unique measure $\Upsilon:=|\omega|$ on $\limitwiredensities$.
Since $\omega$-almost every $W\in\wireN$ has $|W|\neq\mathbf{I}$, this limiting measure satisfies $\Upsilon(\{\mathbf{I}\})=0$.
Moreover, 
\[\int_{\limitwiredensities}(1-\upsilon_*^{(2)})|\omega_n|(d\upsilon)\uparrow\int_{\limitwiredensities}(1-\upsilon^{(2)}_*)\Upsilon(d\upsilon)\]
and
\[\omega_n(A_n)\leq\omega(A_n)=\omega(\{W\in\wireN:\,W|_{[2]}\neq\mathbf{Id}_{[2]}\})<\infty.\]
Therefore, $\Upsilon$ satisfies \eqref{eq:regularity upsilon} and $\Omega_{\Upsilon}$ satisfies \eqref{eq:regularity omega1}.

We must still show that $\omega=\Omega_{\Upsilon}$.
By Proposition \ref{prop:mixing measure discrete}, we can write
\[\overleftarrow{\omega}_n(dW)=\int_{\limitwiredensities}\Omega_{\upsilon}(dW)|\overleftarrow{\omega}_n|(d\upsilon),\]
for each $n\in\mathbb{N}$.  
By the monotone convergence theorem and exchangeability, 
\[\lim_{m\uparrow\infty}\overleftarrow{\omega}_m(\{W:\,W|_{[n]}=V\})=\omega(\{W:\,W|_{[n]}=V\}),\]
for every $V\in\wiren\setminus\{\idn\}$, $n\in\Nb$.  Also, for every $m\geq1$,
\[\overleftarrow{\omega}_m(\{W:\,W|_{[n]}=V\})=\int_{\limitwiredensities}\Omega_{\upsilon}(\{W:\,W|_{[n]}=V\})\,|\overleftarrow{\omega}_m|(d\upsilon),\]
which increases to $\Omega_{\Upsilon}(\{W:W|_{[n]}=V\})$ as $m\rightarrow\infty$.
By uniqueness of limits, $\Omega_{\Upsilon}$ and $\omega$ agree on a generating $\pi$-system of the Borel $\sigma$-field and, therefore, they agree on all measurable subsets of $\wireN$.
The proof is complete.
\end{proof}

\begin{lemma}\label{lemma:3}
Suppose $\omega$ is exchangeable, satisfies \eqref{eq:regularity omega1}, and $\omega$-almost every $W\in\wireN$ has $|W|=\mathbf{I}$.
Then there exist unique constants $\mathbf{e}_0,\mathbf{e}_1,\mathbf{v}\geq0$ and a unique probability measure $\Sigma$ on $\stoch$ such that
\[\omega=\mathbf{v}\Omega_{\Sigma}+\mathbf{e}_0\epsilon_0+\mathbf{e}_1\epsilon_1,\]
where $\epsilon_k$ is defined in  \eqref{eq:reassignment measure} and $\Omega_{\Sigma}$ is defined in \eqref{eq:single vertex update}.
\end{lemma}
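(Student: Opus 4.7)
The approach is to decompose $\omega$ according to the structural type of the defect set $E(W):=\{\{i,j\}:\,W^{ij}\neq(0,1)\}$ of each $W$ in its support. Because $|W|=\mathbf{I}$ for $\omega$-a.e.\ $W$, the defect graph $E(W)$ has zero graph density on $\mathbb{N}$, and combined with the regularity bound $\omega(\{W:\,W|_{[2]}\neq\idtwo\})<\infty$ this sharply restricts which configurations can carry $\omega$-mass. The first step I would carry out is a support classification: $\omega$-almost every $W$ is either (E) a single-edge update with $|E(W)|=1$, or (V) a single-vertex update, meaning that all defects of $W$ lie on edges incident to one distinguished vertex. Any more complex defect shape---two disjoint defect edges, defects at two vertices both of infinite defect degree, partial stars not coming from an i.i.d.\ distribution on the incident edges, and so on---generates a $\symmetricN$-orbit containing infinitely many representatives in $\{W:\,W|_{[2]}\neq\idtwo\}$, so exchangeability combined with the finiteness hypothesis forces the $\omega$-mass of such an orbit to vanish.

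With the resulting decomposition $\omega=\omega^E+\omega^V$ in hand, I would treat each summand by exchangeability. For $\omega^E$, supported on the countable set of single-edge updates indexed by a pair $\{i,j\}$ and a value $(k_0,k_1)\in\{(0,0),(1,0),(1,1)\}$, exchangeability equates the $\omega$-mass across pairs of equal type, giving three rates; I expect the flip-type rate corresponding to $(1,0)$ to drop out through a coupling/re-coding argument tied to the canonical Aldous--Hoover construction of $\omega$ from Proposition \ref{prop:existence omega}, leaving $\omega^E=\mathbf{e}_0\epsilon_0+\mathbf{e}_1\epsilon_1$. For $\omega^V$, write $\omega^V=\sum_i\omega^{V,(i)}$ by the host vertex of the defects; under $\omega^{V,(i)}$ the edge sequence $(W^{ij})_{j\neq i}$ is exchangeable in $\{0,1\}^2$, and de Finetti (together with the conditional independence of $W^{ij}_0$ and $W^{ij}_1$ inherited from the AH representation) identifies the mixing measure as a probability measure $\Sigma_i$ on $\stoch$, yielding $\omega^{V,(i)}=\mathbf{v}_i\Omega_{\Sigma_i}^{(i)}$. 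Exchangeability across $i$ then forces $\mathbf{v}_i\equiv\mathbf{v}$ and $\Sigma_i\equiv\Sigma$, and summing gives $\omega^V=\mathbf{v}\Omega_\Sigma$.

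Combining the two pieces produces the asserted decomposition, and uniqueness of $\mathbf{e}_0,\mathbf{e}_1,\mathbf{v}$, and $\Sigma$ follows by reading them off from the finite-dimensional jump rates $\omega^{(n)}$ evaluated at patterns that isolate each component: single-edge transitions of the two forcing types recover $\mathbf{e}_0$ and $\mathbf{e}_1$, while the vertex rate $\mathbf{v}$ and marginal distribution $\Sigma$ are determined by rates of events with multiple defects sharing a common vertex, e.g.\ by comparing patterns on $[3]$ supported at the two edges $\{1,2\}$ and $\{1,3\}$. The main obstacle is the structural classification in the first step: a careful orbit-counting argument under $\symmetricN$, tracking both discrete (finite-defect) and continuous (infinite-defect, star-type) orbits and showing that only the two asserted shapes survive the finiteness condition $\omega(\{W:\,W|_{[2]}\neq\idtwo\})<\infty$, is the delicate technical input needed to justify the initial decomposition $\omega=\omega^E+\omega^V$.
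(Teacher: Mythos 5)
Your overall architecture is the same as the paper's: split $\omega$ according to the defect structure of $W$, kill everything except single-edge and single-vertex configurations using exchangeability together with $\omega(\{W:\,W|_{[2]}\neq\idtwo\})<\infty$, apply de Finetti to the surviving star component to produce $\Sigma$, and absorb the flip-type edge update into $\mathbf{e}_0$ and $\mathbf{e}_1$ (the paper does exactly your ``re-coding'': $\mathbf{e}_0=\mathbf{c}_0+\mathbf{c}_{10}$ and $\mathbf{e}_1=\mathbf{c}_1+\mathbf{c}_{10}$, since the flip map acts as $\rho_1^{ij}$ on graphs with $ij$ absent and as $\rho_0^{ij}$ on graphs with $ij$ present). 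The genuine gap is in the exclusion step, which you correctly flag as the delicate input but for which orbit counting alone is not an adequate tool. Orbit counting (infinitely many disjoint, equal-mass permutation images inside the finite-mass set $\{W:\,W|_{[2]}\neq\idtwo\}$) disposes of configurations specified by finitely many marked positions --- two or more defect edges, or two or more vertices of positive defect density --- because these yield countably many disjoint translates of a single positive-mass event. It does not dispose of infinite defect sets of zero density: an infinite star whose defect positions have density zero among the edges incident to its center, an infinite matching, or defects spread over infinitely many rows each of density zero. For such configurations the $\symmetricN$-orbit of an individual $W$ is uncountable and the mass may be spread continuously over it, so translating a single atom produces no contradiction with the finiteness hypothesis.

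The paper closes exactly this hole with measure-theoretic rather than combinatorial arguments: it first partitions by the a priori measurable criterion of row densities $L_W(i)$ (rather than by the final shapes, which one cannot legitimately use to decompose $\omega$ before they are established), normalizes the $n$-shifted restrictions $\overleftarrow{\omega}_n$ to probability measures, and then invokes de Finetti's theorem and the law of large numbers for exchangeable and dissociated arrays to conclude that a conditionally i.i.d.\ sequence or array with zero limiting density of non-trivial entries is trivial almost surely; this is what eliminates the zero-density infinite stars and the possibility of infinitely many defective rows. You would need to add this ingredient to your first step. Once it is in place, the remainder of your outline --- de Finetti identifying $\Sigma_i$ on each star, exchangeability forcing $\mathbf{v}_i\equiv\mathbf{v}$ and $\Sigma_i\equiv\Sigma$, and uniqueness read off from the finite-dimensional jump rates --- matches the paper's argument and goes through.
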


\begin{proof}
For any $W\in\wireN$, $i\in\Nb$, and $\varepsilon,\delta>0$, we define
\begin{align*}
L_W(i)&:=\limsup_{n\rightarrow\infty}n^{-1}\sum_{j=1}^n\mathbf{1}\{W^{ij}\neq(0,1)\},\\
S_W(\varepsilon)&:=\{i\in\Nb:\,L_W(i)\geq\varepsilon\},\\
|S_W(\varepsilon)|&:=\limsup_{n\rightarrow\infty}n^{-1}\sum_{i=1}^n\mathbf{1}\{i\in S_W(\varepsilon)\},\quad\text{and}\\
V(\varepsilon,\delta)&:=\{W\in\wireN:\,|S_W(\varepsilon)|\geq\delta\}.
\end{align*}
We can partition the event $\{W\in\wireN:\,|W|=\mathbf{I}\}$ by $V\cup E$, where
\begin{align*}
V&:=\bigcup_{i=1}^{\infty}\{W\in\wireN:\,L_W(i)>0\}\quad\text{and}\\
E&:=\bigcap_{i=1}^{\infty}\{W\in\wireN:\,L_W(i)=0\}.
\end{align*}
Thus, we can decompose $\omega$ as the sum of singular measures
\[\omega=\omega_V+\omega_E,\]
where $\omega_V$ and $\omega_E$ are the restrictions of $\omega$ to $V$ and $E$, respectively.
As in \eqref{eq:omega-n}, we write 
\begin{align*}
\omega_{V,n}&:=\omega_V\mathbf{1}_{\{W\in\wireN:\,W|_{[n]}\neq\idn\}}\quad\text{and}\\
\omega_{E,n}&:=\omega_E\mathbf{1}_{\{W\in\wireN:\,W|_{[n]}\neq\idn\}}\
\end{align*}
to denote the restrictions of $\omega$ to $V\cap\{W\in\wireN:\,W|_{[n]}\neq\idn\}$ and $E\cap\{W\in\wireN:\,W|_{[n]}\neq\idn\}$, respectively.
By \eqref{eq:regularity omega1}, both $\omega_{V,n}$ and $\omega_{E,n}$ are finite for all $n\geq2$ and their images under the $n$-shift, denoted $\overleftarrow{\omega}_{V,n}$ and $\overleftarrow{\omega}_{E,n}$, are exchangeable.

\begin{proof}[Case $V$:]
Restricting our attention first to $V$, we note that $|W|=\mathbf{I}$ implies
\[\limsup_{n\rightarrow\infty}n^{-2}\sum_{1\leq i,j\leq n}\mathbf{1}\{W^{ij}\neq(0,1)\}=\lim_{n\rightarrow\infty}n^{-2}\sum_{1\leq i,j\leq n}\mathbf{1}\{W^{ij}\neq(0,1)\}=0.\]
By the law of large numbers for exchangeable sequences, we can replace the upper limits in our definition of $L_W(i)$ and $|S_W(\varepsilon)|$ by proper limits for $\omega$-almost every $W\in\wireN$.
For any $\varepsilon,\delta>0$, $\omega$-almost every $W\in V(\varepsilon,\delta)$ satisfies
\begin{eqnarray*}
\lefteqn{\limsup_{n\rightarrow\infty}n^{-2}\sum_{1\leq i,j\leq n}\mathbf{1}\{W^{ij}\neq(0,1)\}=}\\
&=&\lim_{n\rightarrow\infty}n^{-2}\sum_{1\leq i,j\leq n}\mathbf{1}\{W^{ij}\neq(0,1)\}\\
&=&\lim_{n\rightarrow\infty}\lim_{m\rightarrow\infty}(nm)^{-1}\sum_{i=1}^n\sum_{j=1}^m\mathbf{1}\{W^{ij}\neq(0,1)\}\\
&=&\lim_{n\rightarrow\infty}n^{-1}\sum_{i=1}^n\lim_{m\rightarrow\infty}m^{-1}\sum_{j=1}^m\mathbf{1}\{W^{ij}\neq(0,1)\}\\
%&\geq&\lim_{n\rightarrow\infty}\lim_{m\rightarrow\infty}(nm)^{-1}\sum_{i=1}^n\mathbf{1}\{i\in S_W(\varepsilon)\} \sum_{j=1}^m\mathbf{1}\{W^{ij}\neq(0,1)\}\\
%&\geq&\lim_{n\rightarrow\infty}n^{-1}\sum_{i=1}^{n}\mathbf{1}\{i\in S_W(\varepsilon)\}\lim_{m\rightarrow\infty}m^{-1}\sum_{j=1}^m\mathbf{1}\{i\in S_W(\varepsilon),\,W^{ij}\neq(0,1)\}\\
&\geq&\lim_{n\rightarrow\infty}n^{-1}\sum_{i=1}^n \varepsilon\mathbf{1}\{i\in S_W(\varepsilon)\}\\
%&\geq&\lim_{n\rightarrow\infty}n^{-1}\sum_{i=1}^n\mathbf{1}\{i\in S_W(\varepsilon)\}\varepsilon\\
%&\geq&\lim_{n\rightarrow\infty}n^{-2}\sum_{1\leq i,j\leq n}\mathbf{1}\{i\in S_W(\varepsilon),\,W^{ij}\neq(0,1)\}\\
%&=&\lim_{n\rightarrow\infty}\left(n^{-1}\sum_{i=1}^n\mathbf{1}\{i\in S_W(\varepsilon)\}\right)\left(n^{-1}\sum_{j=1}^{n}\mathbf{1}\{i\in S_W(\varepsilon),\,W^{ij}\neq(0,1)\}\right)\\
&\geq&\varepsilon\delta\\
&>&0,
\end{eqnarray*}
contradicting the assumption that $\omega$-almost every $W\in\wireN$ has $|W|=\mathbf{I}$.
(In the above string of inequalities, passage from the second to third line follows from exchangeability and the law of large numbers and the interchange of sum and limit in the fourth line is permitted because the sum is finite and each of the limits exists by exchangeability and the law of large numbers.)
Therefore, $\omega$ assigns zero mass to $V(\varepsilon,\delta)$ for all $\varepsilon,\delta>0$, and so $\omega$-almost every $W\in\wireN$ must have $|S_W(\varepsilon)|=0$ for all $\varepsilon>0$.
In this case, either $\#S_W(\varepsilon)<\infty$ or $\#S_W(\varepsilon)=\infty$.

Treating the latter case first, we define the $n$-shift $\overleftarrow{W}_n$ of $W\in\wireN$ as above, so that $\overleftarrow{\omega}_{V,n}$ is finite, exchangeable, and assigns positive mass to the event
\[\{W\in\wireN:\,|S_W(\varepsilon)|=0\text{ and }\#S_{\overleftarrow{W}_n}(\varepsilon)=\infty\}.\]
We can regard $\overleftarrow{\omega}_{V,n}$ as a constant multiple of an exchangeable probability measure $\theta_n$ on $\wireN$, so that for fixed $n\in\Nb$ the sequence $(\mathbf{1}\{L_{\overleftarrow{W}_n}(i)\geq\varepsilon\})_{i\in\Nb}$
is an exchangeable $\{0,1\}$-valued sequence under $\theta_n$.
By de Finetti's theorem, 
\[|S_W(\varepsilon)|=\lim_{m\rightarrow\infty}m^{-1}\sum_{i=1}^m\mathbf{1}\{L_{\overleftarrow{W}_n}(i)\geq\varepsilon\}=0\quad\theta_n\text{-a.s.,}\]
which implies $L_{\overleftarrow{W}_n}(i)=0$ for all $i\in\Nb$ $\overleftarrow{\omega}_{V,n}$-a.e., a contradiction.

Now consider the case $\#S_{W}(\varepsilon)<\infty$.
We claim that $\#S_W(\varepsilon)=1$ almost surely.
Suppose $\#S_W(\varepsilon)=k\geq2$ so that $\{i_1<\cdots<i_k\}$ is the list of indices for which $L_W(i_j)\geq\varepsilon$.
Taking $n=i_k-1$, we know that ${\omega}_{V,n}$ is a finite measure that is invariant under permutations that fix $[n]$.
Under the $n$-shift, $\overleftarrow{\omega}_{V,n}$ is finite, exchangeable, and assigns equal mass to events of the form
\[A_i=\{W\in\wireN:\,L_{\overleftarrow{W}_n}(i)\geq\varepsilon,\,L_{\overleftarrow{W}_n}(j)<\varepsilon\text{ for }j\neq i\},\quad i\in\Nb.\]
Thus, if $\overleftarrow{\omega}_{V,n}(A_i)>0$ for any $i\in\Nb$, then $\overleftarrow{\omega}_{V,n}$ has infinite total mass.
Since this argument applies as long as $\#S_W(\varepsilon)=k\geq2$, it follows that $\omega$ must assign zero mass to $\{W\in\wireN:\,\#S_W(\varepsilon)>1\}$.

On the other hand, if $\#S_W(\varepsilon)=1$, then $\omega$ must assign the same mass to all events
\[A_i=\{W\in\wireN:\,L_W(i)\geq\varepsilon,\,L_W(j)<\varepsilon\text{ for all }j\neq i\}.\]
In this case,
\[\omega_V(\{W\in\wireN:\,W|_{[n]}\neq\idn\})\leq\sum_{i=1}^{n}\omega_{V,n}(A_i)=n\omega_{V,n}(A_1)<\infty,\quad\text{for all }n\in\Nb,\]
which does not contradict \eqref{eq:regularity omega}.

Since the above holds for all $\varepsilon>0$, there must be a unique vertex $i\in\Nb$ for which $L_W(i)>0$.
By exchangeability and the right-hand side of \eqref{eq:regularity omega1}, $(W^{ij})_{j\neq i}$ is governed by a finite measure $\mathbf{v}_i\Sigma_i$, where $\mathbf{v}_i\geq0$ and $\Sigma_i$ is the unique probability measure guaranteed by de Finetti's theorem.
Thus,
\[\omega_V=\sum_{i=1}^{\infty}\mathbf{v}_i\Omega_{\Sigma_i}^{(i)}.\]
Exchangeability of $\omega$ requires that each term is equal, so there is a unique probability measure $\Sigma$ and unique $\mathbf{v}\geq0$ for which
\[\omega_{V}=\mathbf{v}\sum_{i=1}^{\infty}\Omega_{\Sigma}^{(i)}.\]

Just as any exchangeable rewiring map determines a transition kernel from $\graphsN$ to $\graphsN$, any exchangeable $\{0,1\}\times\{0,1\}$-valued sequence determines an exchangeable transition kernel from $\{0,1\}$ to $\{0,1\}$: let $W=(W_0^i,W_1^i)_{i\geq1}$ be $\{0,1\}\times\{0,1\}$-valued and for $x=(x^i)_{i\geq1}$ in $\{0,1\}^{\Nb}$ define $x'=W(x)$ by
\[x'^i=\left\{\begin{array}{cc}
W_0^i,& x^i=0\\
W_1^i,& x^i=1.
\end{array}\right.\]
A straightforward argument along the same lines as Theorem \ref{thm:discrete-rep} (with the obvious substitution of de Finetti's theorem for Aldous--Hoover) puts exchangeable $\{0,1\}\times\{0,1\}$-valued sequences in correspondence with exchangeable transition kernels on $\{0,1\}^{\Nb}$.
The ergodic measures in this space are in correspondence with the unit square $[0,1]\times[0,1]$ and, thus, also the space of $2\times 2$ stochastic matrices, allowing us to regard $\Sigma$ as a probability measure on $\mathcal{S}_2$.
%{\color{blue}
%Since the space of exchangeable, dissociated probability measures on $\{0,1\}\times\{0,1\}$-valued sequences is isomorphic to the space of $2\times 2$ stochastic matrices, we can regard $\Sigma$ as a probability measure on $\stoch$.
%}
\end{proof}

\begin{proof}[Case $E$:]
On event $E$, $W^{ij}\neq(0,1)$ occurs for a zero proportion of all pairs $ij$ and also a zero proportion of all $j\neq i$ for a fixed $i$.
For $W\in\wireN$, we define 
\[E_W:=\{ij:\,W^{ij}\neq(0,1)\},\]
which must satisfy either $\#E_W=\infty$ or $\#E_W<\infty$.
Also, since $\overleftarrow{\omega}_{E,n}$ is finite, we can write $\overleftarrow{\omega}_{E,n}\propto\theta_n$ for some exchangeable probability measure $\theta_n$ on $\wireN$, for each $n\geq2$.

Suppose first that $\#E_W=\infty$.
Then $\overleftarrow{\omega}_{E,2}<\infty$ implies $\overleftarrow{W}_2\sim\theta_2$ is an exchangeable $\{0,1\}$-valued array with 
\[\limsup_{n\rightarrow\infty}n^{-2}\sum_{1\leq i,j\leq n}\mathbf{1}\{\overleftarrow{W}_2^{ij}\neq(0,1)\}=0.\]
By the Aldous--Hoover theorem, $\#E_{\overleftarrow{W}_2}=0$ $\overleftarrow{\omega}_{E,2}$-almost everywhere, forcing all edges in $E_W$ to be in the first row of $W$.
As $m\rightarrow\infty$, $\overleftarrow{\omega}_{E,m}$ forces all edges of $E_W$ to be in the first $m-1$ rows of $W\in\wireN$.
By exchangeability, all elements of $E_W$ must be in the same row for $\omega_E$-almost every $W\in\wireN$.
Thus, $\omega_{E}$ assigns equal mass to each event $R_i\subseteq E$, where
\[R_i:=\{W\in\wireN:\,i'j'\in E_W\quad\text{if and only if}\quad i\in\{i',j'\}\}\]
is the event that all non-trivial entries of $W$ are in the $i$th row.

For every $n\geq2$, $\omega_{E,n}$ is exchangeable with respect to permutations that fix $[n]\rightarrow[n]$, and so $(W^{n,n+j})_{j\geq1}$ is exchangeable for each $n\geq1$ and 
\[\limsup_{m\rightarrow\infty}m^{-1}\sum_{j=1}^m\mathbf{1}\{W^{n,n+j}\neq(0,1)\}=0.\]
By de Finetti's theorem, $W^{n,n+j}=0$ for all $j\geq1$ almost surely, and so $\omega_{E,n}$-almost every $W\in\wireN$ must have  $\#E_W<\infty$, for every $n\geq2$.
By the monotone convergence theorem, $\omega_E$-almost every $W$ must have $\#E_W<\infty$.

Now suppose that $\#E_W<\infty$.
Then $\omega_{E}=\sum_{i=1}^{\infty}\omega_{E,R_i}$, where $\omega_{E,R_i}$ is the restriction of $\omega_{E}$ to $R_i$ for every $i\in\Nb$.
By exchangeability, every $\omega_{E,R_i}$ must determine the same exchangeable measure on $\{0,1\}$-valued sequences.
By a similar argument as for Case $V$ above, we can rule out all possibilities except the event that $W^{ij}\neq(0,1)$ for a unique pair $ij$, $i\neq j$.
For suppose $\#E_W=k\geq2$, then there is a unique $i\in\Nb$ such that
\[i'j'\in E_W\quad\text{if and only if}\quad i\in\{i',j'\}.\]
Without loss of generality, we assume $i=1$ so that we can identify $E_W=\{j\in\Nb:\,W^{1j}\neq(0,1)\}=\{1\leq i_1<\cdots<i_k\}$ with a finite subset of $\Nb$.
We define $n=i_k-1$ and $X_{W,n}=(\mathbf{1}\{W^{1,n+j}\neq(0,1)\})_{j\geq1}$.
By assumption, $\overleftarrow{\omega}_{E,n}$ is finite and exchangeable, and so $X_{W,n}$ is an exchangeable $\{0,1\}$-valued sequence with only a single non-zero entry.
By exchangeability, $\overleftarrow{\omega}_{E,n}$ assigns the same mass to all sequences in $\{X_{W,n}^{(n+1,n+j)}:\,j\geq1\}$, where $(n+1,n+j)$ represents the transposition of elements $n+1$ and $n+j$.
It follows that $X_{W,n}$ can have positive mass only if $\overleftarrow{\omega}_{E,n}$ has infinite total mass, a contradiction.

On the other hand, if $\#E_W=1$, then we can express $\omega_E$ as
\[\omega_E=\sum_{j>i\geq1}(\mathbf{c}_{0}^{ij}\delta_{\rho^{ij}_{0}}+\mathbf{c}_{1}^{ij}\delta_{\rho^{ij}_{0}}+\mathbf{c}_{10}^{ij}\delta_{\rho^{ij}_{10}}),\]
where $\mathbf{c}_0^{ij},\mathbf{c}_1^{ij},\mathbf{c}_{10}^{ij}\geq0$, $\rho^{ij}_{0}$ and $\rho^{ij}_1$ are the single edge update maps defined before \eqref{eq:reassignment measure}, and $\rho^{ij}_{10}$ is the rewiring map $W$ with all $W^{i'j'}=(0,1)$ except $W^{ij}=(1,0)$.
This measure clearly assigns zero mass to $\id$, but it is exchangeable only if there are $\mathbf{c}_0,\mathbf{c}_1,\mathbf{c}_{10}\geq0$ such that $\mathbf{c}_0^{ij}=\mathbf{c}_0$, $\mathbf{c}_1^{ij}=\mathbf{c}_1$, and $\mathbf{c}_{10}^{ij}=\mathbf{c}_{10}$ for all $j>i\geq1$.
This measure is exchangeable and satisfies the right-hand side of \eqref{eq:regularity omega}.
Since $\rho_{10}^{ij}$ gives $\rho_{10}^{ij}(W)=\rho_{1}^{ij}(W)$ whenever $W^{ij}=0$ and $\rho_{10}^{ij}(W)=\rho_0^{ij}(W)$ when $W^{ij}=1$, we define $\mathbf{e}_0=\mathbf{c}_{0}+\mathbf{c}_{10}$ and $\mathbf{e}_1=\mathbf{c}_{1}+\mathbf{c}_{10}$.
The proof is complete.

\end{proof}

\end{proof}

\begin{proof}[Proof of Theorem \ref{thm:Levy-Ito}]
%By Theorem \ref{thm:Poisson}, the evolution of $\mathbf{\Gamma}$ is determined by a measure $\omega$ satisfying \eqref{eq:regularity omega1}.
By Lemma \ref{lemma:limit exists}, $\omega$-almost every $W\in\wireN$ possesses a rewiring limit $|W|$ and we can express $\omega$ as a sum of singular components:
\[\omega=\omega\mathbf{1}_{\{|W|\neq\mathbf{I}\}}+\omega\mathbf{1}_{\{|W|=\mathbf{I}\}}.\]
By Lemma \ref{lemma:2}, the first term corresponds to $\Omega_{\Upsilon}$ for some unique measure $\Upsilon$ satisfying \eqref{eq:regularity upsilon}.
By Lemma \ref{lemma:3}, the second term corresponds to $\mathbf{v}\Omega_{\Sigma}+\mathbf{e}_0\epsilon_0+\mathbf{e}_1\epsilon_1$ for a unique probability measure $\Sigma$ on $\stoch$ and unique constants $\mathbf{v},\mathbf{e}_0,\mathbf{e}_1\geq0$.
The proof is complete.
\end{proof}

\subsection{The projection into $\limitdensities$}
Let $\Gbf=\{\Gbf_G:\,G\in\graphsN\}$ by an exchangeable Feller process and, for $D\in\limitdensities$, recall the definition of $\Gbf_D=(\Gamma_t)_{t\geq0}$ as the process obtained by first taking $\Gamma_0\sim\gamma_D$ and then putting $\Gbf_D=\Gbf_G$ on the event $\Gamma_0=G$.
We now show that the projection $|\mathbf{\Gamma}_{D}|=(|\Gamma_t|)_{t\geq0}$ into $\limitdensities$ exists simultaneously for all $t\geq0$ with probability one.
We also prove that these projections determine a Feller process on $\limitdensities$ by
\[|\Gbf_{\limitdensities}|=\{|\Gbf_{D}|:\,D\in\limitdensities\}.\]

\begin{prop}\label{prop:density fixed t}
Let $\Gbf=\{\Gbf_G:\,G\in\graphsN\}$ be an exchangeable Feller process on $\graphsN$.
For any $D\in\limitdensities$, let $\Gbf_D=(\Gamma_t)_{t\geq0}$ be obtained by taking $\Gamma_0\sim\gamma_D$ and putting $\Gbf_{D}=\Gbf_G$ on the event $\Gamma_0=G$.
Then the graph limit $|\Gamma_t|$ exists almost surely for every $t\geq0$.
\end{prop}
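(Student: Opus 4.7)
The plan is to reduce the statement to the almost sure existence of the graph limit for a single exchangeable random graph, applied separately at each time $t \geq 0$. The key observation is that, for every fixed $t \geq 0$, the random graph $\Gamma_t$ is exchangeable, so the Aldous--Hoover machinery (cf.\ Theorem \ref{thm:A-H} and the graph-limit analog of Lemma \ref{lemma:existence limit density}(iii)) applies directly.

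First, I would verify exchangeability of $\Gamma_t$ for each fixed $t$. Since $D \in \limitdensities$, the initial law $\gamma_D$ is exchangeable, so $\Gamma_0 \equalinlaw \Gamma_0^{\sigma}$ for all $\sigma \in \symmetricN$. Combining this with the assumed exchangeable transition law \eqref{eq:exch tps}, for every measurable $A \subseteq \graphsN$ and every $\sigma \in \symmetricN$,
\[
\mathbb{P}\{\Gamma_t \in A\} = \int_{\graphsN} \mathbb{P}\{\Gamma_t \in A \mid \Gamma_0 = G\}\, \gamma_D(dG) = \int_{\graphsN} \mathbb{P}\{\Gamma_t^{\sigma} \in A \mid \Gamma_0 = G\}\, \gamma_D(dG) = \mathbb{P}\{\Gamma_t^{\sigma} \in A\},
\]
where the middle equality uses exchangeability of the transition probability together with invariance of $\gamma_D$ under $\sigma$. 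Hence $\Gamma_t$ is an exchangeable random graph.

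Next, I would invoke the general fact that every exchangeable random graph admits a graph limit almost surely. This is the graph-valued counterpart of Lemma \ref{lemma:existence limit density}(iii) and is a direct consequence of the Aldous--Hoover representation (Theorem \ref{thm:A-H}) applied to the symmetric $\{0,1\}$-valued array $(\Gamma_t^{ij})_{i,j \geq 1}$: by the strong law of large numbers applied to the i.i.d.\ latent $[0,1]$-uniforms, for each $F \in \graphsn$ the density $\delta(F, \Gamma_t|_{[N]})$ converges as $N \rightarrow \infty$ to a deterministic function of the directing variables, simultaneously for all finite $F$ (a countable union of null sets is null). Thus $|\Gamma_t|$ exists on an event of probability one.

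The only subtlety is that existence of $|\Gamma_t|$ for each fixed $t$ gives a null exceptional set depending on $t$, so the conclusion of this proposition is an ``almost surely for every fixed $t$'' statement (not a simultaneous statement in $t$); the stronger almost sure statement $\mathbb{P}\{|\Gamma_t|\text{ exists for all }t \geq 0\}=1$ is what Theorem \ref{thm:induced density process} ultimately delivers, using the c\`adl\`ag structure and the Poissonian construction of Theorem \ref{thm:Poisson}. For the present proposition, no such uniformity is needed and a single application of Aldous--Hoover to $\Gamma_t$ suffices.
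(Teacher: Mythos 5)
Your proof is correct and follows the same route as the paper: the paper's own proof is a one-line appeal to exchangeability of $\gamma_D$, exchangeability of the transition law, and the Aldous--Hoover theorem, which is exactly the argument you spell out. Your closing remark distinguishing the fixed-$t$ statement from the simultaneous-in-$t$ statement of Theorem \ref{thm:induced density process} also matches the paper's own commentary immediately following the proposition.
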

\begin{proof}
For each $D\in\limitdensities$, this follows immediately by exchangeability of $\Gamma_0\sim\gamma_D$, exchangeability of the transition law of $\Gbf$, and the Aldous--Hoover theorem.
\end{proof}

Proposition \ref{prop:density fixed t} only establishes that the graph limits exist at any countable collection of times.   
Theorem \ref{thm:induced density process} establishes that $|\Gamma_t|$ exists simultaneously for all $t\geq0$.

\vspace{2mm}

\noindent{\bf Theorem \ref{thm:induced density process}}.  {\em 
Let $\mathbf{\Gamma}=\{\Gbf_G:\,G\in\graphsN\}$ be an exchangeable Feller process on $\graphsN$.
Then $|\mathbf{\Gamma}_{\limitdensities}|$ exists almost surely and is a Feller process on $\limitdensities$.
Moreover, each $|\mathbf{\Gamma}_D|$ is continuous at all $t>0$ except possibly at the times of Type (III) discontinuities.
}

\vspace{2mm}

Recall that every $D\in\limitdensities$ corresponds to a probability measure $\gamma_D$ on $\graphsN$.  We equip $\limitdensities$ with the metric \eqref{eq:TV},
under which it is a  compact space.  
Furthermore, any $\upsilon\in\limitwiredensities$ corresponds to a transition probability $P_{\upsilon}$ on $\graphsN\times\graphsN$ and, thus, determines a
Lipschitz continuous map $\upsilon:\limitdensities\rightarrow\limitdensities$ through $D\mapsto D\upsilon$ as in \eqref{eq:right action}.
Consequently,
\[\xnorm{D{\upsilon}-D'\upsilon}\leq \xnorm{D-D'}\quad\text{for all }D,D'\in\limitdensities\text{ and all }\upsilon\in\limitwiredensities.\]

\begin{proof}[Proof of Theorem \ref{thm:induced density process}]

\begin{proof}[Existence]
Fix any $D\in\limitdensities$ and let $\Gbf_{[0,1]}=(\Gamma_{[0,1]}^{ij})_{i,j\geq1}$ denote the array of edge trajectories in $\Gbf_D$ on the interval $[0,1]$.
By the consistency assumption for $\Gbf$, each $\Gamma_{[0,1]}^{ij}$ is an alternating collection of finitely many $0$s and $1$s.
 Formally, each $\Gamma_{[0,1]}^{ij}=y$ is a partition of $[0,1]$ into finitely many non-overlapping intervals $J_l$ along with an initial status $y_0\in\{0,1\}$.  
The starting condition $y_0$ determines the entire path $y=(y_t)_{t\in[0,1]}$, because $y$ must alternate between states $0$ and $1$ in the successive subintervals $J_l$.  

We denote this space by $\mathcal{I}$, and we write $\mathcal{I}^*$ to denote the closure of $\mathcal{I}$ in the Skorokhod space of c\`adl\`ag functions $[0,1]\rightarrow\{0,1\}$.
We can partition $\mathcal{I}^*:=\bigcup_{m\in\mathbb{N}}\mathcal{I}^*_m$, where $\mathcal{I}^*_m$ is the closure of
\[\{y\in\mathcal{I}^*:\,y\text{ has exactly } m\text{ sub-intervals}\}.\]
Consequently, $\mathcal{I}^*$ is complete, separable, and Polish.
The fact that $\mathcal{I}^*$ is Polish allows us to apply the Aldous--Hoover theorem to study the $\mathcal{I}^*$-valued array $\Gbf_{[0,1]}$.

Since the initial distribution of $\Gbf_D$ is exchangeable, the entire path $(\Gamma_t)_{t\geq0}$ satisfies
\[(\Gamma_t^\sigma)_{t\geq0}\equalinlaw(\Gamma_t)_{t\geq0}\quad\text{ for all permutations }\sigma:\Nb\rightarrow\Nb.\]
In particular, the array $\Gbf_{[0,1]}$ of edge trajectories  is weakly exchangeable.
By the Aldous--Hoover theorem, we can express $\Gbf_{[0,1]}$ as a mixture of exchangeable, dissociated $\mathcal{I}^*$-valued arrays.
Thus, it suffices to prove existence of $|Y|=(|Y_t|)_{t\in[0,1]}$ for exchangeable, dissociated $\mathcal{I}^*$-valued arrays $Y=(Y^{ij})_{i,j\geq1}$.

To this end, we treat $Y$ just as the edge trajectories of a graph-valued process on $[0,1]$, so $Y^{ij}_t$ denotes the status of edge $ij$ at time $t\in[0,1]$ and $\delta(F,Y_t)$ is the limiting density of any finite subgraph $F\in\mathcal{G}^*$ in $Y_t$.
To show that $(|Y_t|)_{t\in[0,1]}$ exists simultaneously at all $t\in[0,1]$, we show that the upper and lower limits of
$(\delta(F,Y^{[n]}_t))_{n\in\Nb}$ coincide for all $t\in[0,1]$ simultaneously.  
In particular, for fixed $F\in\mathcal{G}_m$, $m\in\mathbb{N}$, and $t\in[0,1]$ we write
\begin{align*}
\delta_t^+(F)&:=\limsup_{n\rightarrow\infty}\delta(F,Y^{[n]}_t)\quad\text{and}\\
\delta_t^-(F)&:=\liminf_{n\rightarrow\infty}\delta(F,Y^{[n]}_t).\end{align*}
We already know, cf.\ Proposition \ref{prop:density fixed t}, that $\mathbb{P}\{\delta_t^+(F)=\delta_t^-(F)\}=1$ for all {\em fixed} $t\in[0,1]$, but we wish to show that $\delta_t^+(F)=\delta_t^-(F)$ almost surely for {\em all} $t\in[0,1]$, i.e., 
\[\mathbb{P}\{\sup_{t\in[0,1]} |\delta_t^+(F)-\delta_t^-(F)|=0\}=1.\]  

Each path $Y^{ij}_{[0,1]}=(Y^{ij}_t)_{t\in[0,1]}$ has finitely many discontinuities almost surely and so must $Y^{[n]}_{[0,1]}:=(Y^{ij}_{[0,1]})_{1\leq i,j\leq n}$, for every $n\in\mathbb{N}$.
For every $\varepsilon>0$, there is a finite subset $S_{\varepsilon}\subset[0,1]$ and an at most countable partition $J_1,J_2,\ldots$ of the open set $[0,1]\setminus S_{\varepsilon}$ such that 
\[
\begin{array}{l}
\mathbb{P}\{Y^{ij}_{[0,1]}\text{ is discontinuous at }s\}\geq\varepsilon\quad\text{for every }s\in S_{\varepsilon}\text{ and}\\
\mathbb{P}\{Y^{ij}_{[0,1]}\text{ is discontinuous in }J_l\}<\varepsilon,\quad l=1,2,\ldots.
\end{array}\]
%Such a partition must exist by the strong law of large numbers.
Without such a partition, there must be a sequence of intervals $(t-\rho_n,t+\rho_n)$ with $\rho_n\rightarrow0$ that converges to $t\notin S_{\varepsilon}$ such that 
\[\mathbb{P}\{Y_{[0,1]}^{ij}\text{ is discontinuous in }(t-\rho_n,t+\rho_n)\}\geq\varepsilon\quad\text{ for every }n\geq1.\]
Continuity from above implies  
\[\mathbb{P}\{Y_{[0,1]}^{ij}\text{ is discontinuous at }t\notin S_{\varepsilon}\}\geq\varepsilon>0,\]
 which contradicts the assumption $t\notin S_{\varepsilon}$.

The strong law of large numbers also implies that 
\[\mathbb{P}\{Y^{12}\text{ is discontinuous in }J_l\}=\lim_{n\rightarrow\infty}\frac{2}{n(n-1)}\sum_{1\leq i<j\leq n}\mathbf{1}\{Y^{ij}\text{ is discontinuous in }J_l\}<\varepsilon\quad\text{a.s.}\]
 for each sub-interval $J_l$, $l=1,2,\ldots$.
By ergodicity of the action of $\symmetricN$ for dissociated arrays, $\delta_t^+(F)$ and $\delta_t^-(F)$ cannot vary by more than $\varepsilon$ over $J_l$ and, since $\delta_t^+(F)=\delta_t^-(F)$ almost surely for each endpoint of $J_l$,  
\[\mathbb{P}\{\sup_{t\in J_l}|\delta_{t}^+(F)-\delta_t^-(F)|\leq2\varepsilon\}=1\]
for all $l=1,2,\ldots$ and all $\varepsilon>0$.

Since $[0,1]$ is covered by at most countably many sub-intervals $J_l$, $l=1,2,\ldots$, and the non-random set $S:=\bigcup_{\varepsilon>0}S_{\varepsilon}$, it follows that
\[\mathbb{P}\{\sup_{t\in[0,1]}|\delta_t^+(F)-\delta_t^-(F)|\leq2\varepsilon\}=1\quad\text{for all }\varepsilon>0,\text{ for every }F\in\mathcal{G}^*.\]
Continuity from above implies that 
\[\mathbb{P}\{\sup_{t\in[0,1]}|\delta_t^+(F)-\delta_t^-(F)|=0\}=\lim_{\varepsilon\downarrow0}\mathbb{P}\{\sup_{t\in[0,1]}|\delta_t^+(F)-\delta_t^-(F)|\leq2\varepsilon\}=1;\]
thus, $\mathbb{P}\{\delta(F,Y_t)\text{ exists at all }t\in[0,1]\}=1$ for every $F\in\bigcup_{m\in\mathbb{N}}\mathcal{G}_{[m]}$.  
Countable additivity of probability measures implies that this limit exists almost surely for all $F\in\bigcup_{m\in\mathbb{N}}\mathcal{G}_m$ and, thus, $Y_{[0,1]}$ determines a process on $\limitdensities$ almost surely.  

The fact that these limits are deterministic follows from the 0-1 law and our assumption that $Y$ is dissociated.
%: the homomorphism densities of any $\Gamma_t$ depend only on the tail $\sigma$-field generated by $(\Gamma_{t}|_{\mathbb{N}\setminus[n]})_{n\geq1}$.
%For fixed $F\in\mathcal{G}^*$, the sequence $(\delta(F,\Gamma_t|_{[n]}))_{n\geq1}$ satisfies
%\[E(\delta(F,\Gamma_t|_{[m]})\mid (\delta(F,\Gamma_t|_{[n']}))_{n'\geq n})=\delta(F,\Gamma_t|_{[n]})\quad \text{for all }n\geq m\]
%and is, therefore, a reverse martingale.
%The reverse martingale convergence theorem implies that $\delta(F,\Gamma_t)=\lim_{n\rightarrow\infty}\delta(F,\Gamma_t|_{[n]})$ exists almost surely.
%Almost sure existence of the limit follows by a straightforward martingale argument, as the sequence $(\delta(F,\Gamma_t|_{[n]}))_{n\geq1}$ is a reverse martingale for every fixed $F\in\mathcal{G}^*$.
By the Aldous--Hoover theorem, $\Gbf_D$ is a mixture of exchangeable, dissociated processes, so we conclude that $|\Gbf_{D}|$ exists almost surely for every $D\in\limitdensities$.
\end{proof}

\begin{proof}[Feller property]

By Corollary \ref{cor:Poisson}, we can assume $\Gbf=\{\Gbf_G:\,G\in\graphsN\}$ is constructed by $\Gbf_G=(W_t^*(G))_{t\geq0}$, for each $G\in\graphsN$, where $\mathbf{W}_{\omega}=(W_t^*)_{t\geq0}$ is a standard $\omega$-process.
The standard $\omega$-process has the Feller property and, by Theorem \ref{thm:induced density chain}, 
\[|\Gamma_t|\equalinlaw|W_t^*(\Gamma_0)|=|\Gamma_0|{|W_t^*|} \quad\text{a.s.\ for every }t\geq0.\]  
Existence of $|W_t^*|$ at fixed times follows by exchangeability and analog to Proposition \ref{prop:density fixed t}.
In fact, $|\mathbf{W}_{\omega}^*|=(|W_t|)_{t\geq0}$ exists for all $t\geq0$ simultaneously by analog to the above argument for existence of $|\Gbf_D|$, because $\id$ is an exchangeable initial state with rewiring limit $\mathbf{I}$.
The Markov property of $|\Gbf_{\limitdensities}|$ follows immediately.

Let $(\mathbf{P}_t)_{t\geq0}$ be the semigroup of $|\mathbf{\Gamma}_{\limitdensities}|$, i.e., for every continuous function $g:\limitdensities\rightarrow\mathbb{R}$
\[\mathbf{P}_tg(D)=\mathbb{E}(g(|\Gamma_t|)\mid|\Gamma_0|=D).\]
To establish the Feller property, we must show that for every continuous $g:\limitdensities\rightarrow\mathbb{R}$
\begin{itemize}
	\item[(i)] $D\mapsto\mathbf{P}_tg(D)$ is continuous for all $t>0$.
	\item[(ii)] $\lim_{t\downarrow0}\mathbf{P}_tg(D)=g(D)$ for all $D\in\limitdensities$ and
\end{itemize}

For (i), we take any continuous function $h:\limitdensities\rightarrow\mathbb{R}$.  By compactness of $\limitdensities$, $h$ is uniformly continuous and, hence, bounded.  
By dominated convergence and Lipschitz continuity of $|W_t|:\limitdensities\rightarrow\limitdensities$, $D\mapsto\mathbf{P}_th(D)$ is continuous for all $t>0$.  

Part (ii) follows from exchangeability of $\mathbf{W}^*_{\omega}$ and the Feller property of $\mathbf{\Gamma}$.
To see this explicitly, we show the equivalent condition 
\[\lim_{t\downarrow0}\mathbb{P}\{\xnorm{|W_t^*|-\mathbf{I}}>\varepsilon\}=0\quad\text{for all }\varepsilon>0.\]
For contradiction, we assume
\[\limsup_{t\downarrow0}\mathbb{P}\{\xnorm{|W_t^*|-\mathbf{I}}>\varepsilon\}>0\quad\text{for some }\varepsilon>0.\]

%Fix an enumeration $V_1,V_2,\ldots$ of $\mathcal{W}^*=\bigcup_{m\in\Nb}\mathcal{W}_{[m]}$.
By definition of the metric \eqref{eq:TV} on $\limitdensities$,
\[\xnorm{|W_t^*|-\mathbf{I}}=\sum_{n\in\Nb}2^{-n}\sum_{V\in\wiren}|\,|W_t^*|(V)-\mathbf{I}(V)|,\]
and so there must be some $n\geq2$, some $V\in\wiren\setminus\{\idn\}$, and $\varepsilon,\varrho>0$ such that 
\begin{equation}\label{eq:contra}
\limsup_{t\downarrow0}\mathbb{P}\{\delta(V,W_t)>\varepsilon\}\geq\varrho.\end{equation}
Given such a $V\in\wiren\setminus\{\idn\}$, we can choose $F\in\graphsn$ such that $V(F)\neq F$ and define
\[\psi_F(\cdot):=\mathbf{1}\{\cdot|_{[n]}\neq F\}\]
so that $\mathbf{P}_t\psi_F(G)=\mathbb{P}\{\Gamma_t^{[n]}\neq F\mid\Gamma_0=G\}$.
We choose any $G=F^*\in\graphsN$ such that $F^*|_{[n]}=F$.
In this way,
\[\mathbf{P}_t\psi_F(F^*)=\mathbb{P}\{\Gamma_t^{[n]}\neq F\mid\Gamma_0=F^*\}\leq\mathbb{P}\{\mathbf{\Gamma}^{[n]}_{F^*}\text{ discontinuous on }[0,t]\}\]
and
\[\mathbb{P}\{\Gamma_t^{[n]}\neq F\mid\Gamma_0=F^*\}\geq\mathbb{P}\{W_t^{*[n]}=V\}=\mathbb{E}\mathbb{P}\{W_t^{*[n]}=V\mid|W_t^*|\}=\mathbb{E}\delta(V,W_t^*).\]
Now,
\begin{align*}
\mathbb{P}\{\mathbf{\Gamma}^{[n]}_{F^*}\text{ discontinuous on }[0,t]\}\leq1-\exp\{-t\omega(W:\,W|_{[n]}\neq\idn)\}
\end{align*}
implies
\[\limsup_{t\downarrow0}\mathbb{P}\{\mathbf{\Gamma}^{[n]}_{F^*}\text{ discontinuous on }[0,t]\}\leq \limsup_{t\downarrow0}1-\exp\{-t\omega(W:\,W|_{[n]}\neq\idn)\}=0,\]
because $\omega(W:\,W|_{[n]}\neq\idn)<\infty$ for all $n\geq2$ by the right-hand side of \eqref{eq:regularity omega1}.
On the other hand, Markov's inequality implies
\[\mathbb{E}\delta(V,W_t^*)\geq\varepsilon\mathbb{P}\{\delta(V,W_t^*)>\varepsilon\},\]
and \eqref{eq:contra} gives
\[\limsup_{t\downarrow0}\mathbb{E}\delta(V,W_t^*)\geq\varepsilon\limsup_{t\downarrow0}\mathbb{P}\{\delta(V,W_t^*)>\varepsilon\}\geq\varepsilon\varrho>0.\]
Thus, the Feller property of $\Gbf$ and the hypothesis \eqref{eq:contra} lead to contradicting statements
\begin{align*}
\limsup_{t\downarrow0}\mathbf{P}_t\psi_F(F^*)&\geq\varepsilon\varrho>0\quad\text{and}\\
\limsup_{t\downarrow0}\mathbf{P}_t\psi_F(F^*)&\leq0.
\end{align*}
We conclude that
\[\limsup_{t\downarrow0}\mathbb{P}\{\delta(V,W_t^*)>\varepsilon\}<\varrho\]
for all $V\in\wiren\setminus\{\idn\}$, $n\in\Nb$, and all $\varrho,\varepsilon>0$.
There are $4^{n-1\choose2}-1<4^{n\choose2}$ elements in $\wiren\setminus\{\idn\}$ for each $n\geq2$.
Thus,
\[\mathbb{P}\{\delta(\idn,W_t^*)<1-\varepsilon\}\leq\mathbb{P}\left\{\bigcup_{V\in\wiren\setminus\{\idn\}}\{\delta(V,W_t^*)>\varepsilon4^{-{n\choose2}}\}\right\},\]
and we have
\begin{eqnarray*}
\lefteqn{\limsup_{t\downarrow0}\mathbb{P}\{\delta(\idn,W_t^*)<1-\varepsilon\}\leq}\\
&\leq&\limsup_{t\downarrow0}\sum_{V\in\wiren\setminus\{\idn\}}\mathbb{P}\{\delta(V,W_t^*)>\varepsilon4^{-{n\choose2}}\}\\
&\leq&\sum_{V\in\wiren\setminus\{\idn\}}\limsup_{t\downarrow0}\mathbb{P}\{\delta(V,W_t^*)>\varepsilon4^{-{n\choose2}}\}\\
&=&0.
\end{eqnarray*}
It follows that
\[\limsup_{t\downarrow0}\mathbb{P}\{|\delta(V,W^*_t)-\delta(V,\id)|>\varepsilon\}=0\quad\text{for all }\varepsilon>0,\quad\text{for all }V\in\wiren,\quad\text{for all }n\in\Nb.\]
Now, for any $\varepsilon>0$, the event $\{\xnorm{|W_t^*|-\mathbf{I}}>\varepsilon\}$ implies
\[\xnorm{|W_t^*|-\mathbf{I}}=\sum_{n\in\Nb}2^{-n}\sum_{V\in\wiren}||W_t^*|(V)-\mathbf{I}(V)|>\varepsilon.\]
Since $\sum_{V\in\wiren}||W_t^*|(V)-\mathbf{I}(V)|\leq 2$ for every $n\geq1$, we observe that
\[\sum_{n\geq \lceil1-\log_2(\varepsilon)\rceil}2^{-n}\sum_{V\in\wiren}||W_t^*|(V)-\mathbf{I}(V)|\leq 2\sum_{n\geq \lceil1-\log_2(\varepsilon)\rceil}2^{-n}=2^{\lfloor\log_2(\varepsilon)\rfloor}\leq\varepsilon.\]
Writing $m_{\varepsilon}=\lceil 2-\log_2(\varepsilon)\rceil$, we must have
\[|\delta(V,W_t^*)-\delta(V,\id)|>(\varepsilon-2^{-m_{\varepsilon}+1})4^{-{m_{\varepsilon}\choose{2}}}>0\]
for some $V\in\bigcup_{n<m_{\varepsilon}}\wiren$.
With $\varrho:=(\varepsilon-2^{-m_{\varepsilon}+1})4^{-{m_{\varepsilon}\choose{2}}}>0$, we now conclude that
\begin{eqnarray*}
\limsup_{t\downarrow0}\mathbb{P}\{\xnorm{|W_t^*|-\mathbf{I}}>\varepsilon\}&\leq&\limsup_{t\downarrow0}\mathbb{P}\left\{\bigcup_{n\leq m_{\varepsilon}}\bigcup_{V\in\wiren}\{|\delta(V,W_t^*)-\delta(V,\id)|>\varrho\}\right\}\\
&\leq&\limsup_{t\downarrow0}\sum_{n\leq m_{\varepsilon}}\sum_{V\in\wiren}\mathbb{P}\{|\delta(V,W_t^*)-\delta(V,\id))|>\varrho\}\\
&\leq&\sum_{n\leq m_{\varepsilon}}\sum_{V\in\wiren}\limsup_{t\downarrow0}\mathbb{P}\{|\delta(V,W_t^*)-\delta(V,\id)|>\varrho\}\\
&=&0.
\end{eqnarray*}
Where the interchange of sum and $\limsup$ is permitted because the sum is finite.
We conclude that $|W_t^*|\rightarrow_P\mathbf{I}$ as $t\downarrow0$ and $|W_t^*(\Gamma_0)|=|\Gamma_0||W_t^*|\rightarrow_P|\Gamma_0|$ as $t\downarrow0$.
The Feller property follows.

\end{proof}

\begin{proof}[Discontinuities]

Let $\Gbf_D=(\Gamma_t)_{t\geq0}$ have exchangeable initial distribution $\gamma_D$.
Then $(|\Gamma_t|)_{t\geq0}$ exists almost surely and has the Feller property.
By the Feller property, $|\Gbf_D|$ has a version with c\`adl\`ag paths.
For the c\`adl\`ag version, $\lim_{s\uparrow t}|\Gamma_t|$ exists for all $t>0$ with probability one.
%Intuitively, the discontinuities of $|\Gbf_D|$ must occur at discontinuity times of $\Gbf_D$, of which there are at most countably many.
Although the map $|\cdot|:\graphsN\rightarrow\limitdensities$ is not continuous, we also have $\lim_{s\uparrow t}|\Gamma_s|=|\Gamma_{t-}|$, as we now show.

Exchangeability of the initial distribution $\gamma_D$ as well as the transition probability implies $\Gbf_D\equalinlaw\Gbf_D^{\sigma}$ for all $\sigma\in\symmetricN$; therefore, $\Gamma_{t-}$ is exchangeable for all $t>0$.
By the Aldous--Hoover theorem (Theorem \ref{thm:A-H}), $|\Gamma_{t-}|$ exists a.s.\ for all $t>0$.

Now, suppose $t>0$ is a discontinuity time of $\Gbf_D$.
If $\lim_{s\uparrow t}|\Gamma_s|\neq|\Gamma_{t-}|$, then there exists $\varepsilon>0$ such that
\[\xnorm{\lim_{s\uparrow t}|\Gamma_s|-|\Gamma_{t-}|}>\varepsilon.\]
In particular, there exists $m\in\Nb$ and $F\in\graphsm$ such that 
\[|\lim_{s\uparrow t}\delta(F,\Gamma_s)-\delta(F,\Gamma_{t-})|>\varepsilon.\]
By definition,
\[\delta(F,\Gamma_s)=\lim_{n\rightarrow\infty}\frac{1}{n^{\downarrow m}}\sum_{\phi:[m]\rightarrow[n]}\mathbf{1}\{\Gamma_s^{\phi}=F\}.\]
Since the above limits exist with probability one, we can replace limits with limits inferior to obtain
\begin{eqnarray*}
\lefteqn{0\leq|\liminf_{s\uparrow t}\liminf_{n\rightarrow\infty}\frac{1}{n^{\downarrow m}}\sum_{\phi:[m]\rightarrow[n]}\mathbf{1}\{\Gamma_s^{\phi}=F\}-\mathbf{1}\{\Gamma_{t-}^{\phi}=F\}|\leq}\\
&\leq&
\liminf_{s\uparrow t}\liminf_{n\rightarrow\infty}\frac{1}{n^{\downarrow m}}\sum_{\phi:[m]\rightarrow[n]}|\mathbf{1}\{\Gamma_s^{\phi}=F\}-\mathbf{1}\{\Gamma_{t-}^{\phi}=F\}|\\
&\leq&2\liminf_{s\uparrow t}\liminf_{n\rightarrow\infty}\frac{1}{n^{\downarrow m}}\sum_{\phi:[m]\rightarrow[n]}\mathbf{1}\{\Gbf_D^{\phi}\text{ discontinuous on }[s,t)\}.
\end{eqnarray*}
By the bounded convergence theorem, Fatou's lemma, exchangeability and consistency of $\Gbf$, and the construction of $\Gbf$ from a standard $\omega$-process, we have
\begin{eqnarray*}
\lefteqn{\mathbb{E}\left[\liminf_{s\uparrow t}\liminf_{n\rightarrow\infty}\frac{1}{n^{\downarrow m}}\sum_{\phi:[m]\rightarrow[n]}\mathbf{1}\{\Gbf_D^{\phi}\text{ discontinuous on }[s,t)\}\right]\leq}\\
&\leq&\liminf_{s\uparrow t}\liminf_{n\rightarrow\infty}\frac{1}{n^{\downarrow m}}\sum_{\phi:[m]\rightarrow[n]}\mathbb{E}\left[\mathbf{1}\{\Gbf_D^{\phi}\text{ discontinuous on }[s,t)\}\right]\\
&\leq&\liminf_{s\uparrow t}\liminf_{n\rightarrow\infty}1-\exp\{-(t-s)\omega(\{W\in\wireN:\,W|_{[m]}\neq\mathbf{Id}_{[m]}\})\}\\
&=&0.
\end{eqnarray*}
By Markov's inequality, we must have
\[\mathbb{P}\left\{|\lim_{s\uparrow t}\delta(F,\Gamma_s)-\delta(F,\Gamma_{t-})|>\varepsilon\right\}=0\quad\text{for all }\varepsilon>0\text{ and all }F\in\mathcal{G}^*.\]
It follows that 
\[\mathbb{P}\{\xnorm{\lim_{s\uparrow t}|\Gamma_s|-|\Gamma_{t-}|}>\varepsilon\}=0\quad\text{for all }\varepsilon>0.\]

To see that the discontinuities in $|\mathbf{\Gamma}|$ occur only at the times of Type (III) discontinuities, suppose $s>0$ is a discontinuity time for $|\mathbf{\Gamma}_D|$.
The c\`adl\`ag property of $|\Gbf_D|$ along with the fact that $\lim_{s\uparrow t}|\Gamma_s|=|\lim_{s\uparrow t}\Gamma_s|$ a.s.\ implies that $|\delta(F,\Gamma_{s-})-\delta(F,\Gamma_s)|>\varepsilon$ for some $F\in\bigcup_{m\in\Nb}\mathcal{G}_{[m]}$ and some $\varepsilon>0$.
By the strong law of large numbers,
\[\lim_{n\rightarrow\infty}\frac{2}{n(n-1)}\sum_{1\leq i<j\leq n}\mathbf{1}\{\Gamma^{ij}_{s-}\neq\Gamma^{ij}_s\}>0,\]
because otherwise exchangeability would imply $\delta(F,\Gamma_{s-})=\delta(F,\Gamma_s)$;
therefore, there must by infinitely many edges with discontinuity at time $s$ with probability one.
Type (II) discontinuities can involve possibly infinitely many edges; however, there is a single fixed vertex $i^*$ for which all edges not involving $i^*$ remain constant.  So, if $s$ were the time of a Type (II) discontinuity, then 
\[\lim_{n\rightarrow\infty}\frac{2}{n(n-1)}\sum_{1\leq i < j\leq n}\mathbf{1}\{\Gamma^{ij}_{s-}\neq \Gamma^{ij}_s\}\leq\lim_{n\rightarrow\infty}\frac{2}{n(n-1)}\cdot n=0,\]
a contradiction.
Likewise, Type (I) discontinuities only involve a single edge, which cannot affect the limiting density of any subgraph.
The proof is complete.

\end{proof}

\end{proof}

\section{Concluding remarks}\label{section:concluding remarks}

Our main theorems characterize the behavior of exchangeable Feller processes on the space of countable undirected graphs.
These processes appeal to many modern applications in network analysis.
Our arguments rely mostly on the Aldous--Hoover theorem (Theorem \ref{thm:A-H}) and its extension to conditionally exchangeable random graphs (Theorem \ref{thm:discrete-rep}) and, therefore, our main theorems can be stated more generally for exchangeable Feller processes on countable $d$-dimensional arrays taking values in any finite space.
 In particular, our main theorems have an immediate analog to processes in the space of directed graphs and multigraphs.

In some instances, it may be natural to consider a random process on undirected graphs as the projection of a random process on directed graphs.
Any $G\in\graphsN$ projects to an undirected graph in at least two non-trivial ways, which we denote $G_{\vee}$ and $G_{\wedge}$ and define by
\begin{align*}
G_{\vee}^{ij}&:=G^{ij}\vee G^{ji}\quad\text{ and}\\
G_{\wedge}^{ij}&:= G^{ij}\wedge G^{ji}.
\end{align*}
Therefore, there is an edge between $i$ and $j$ in $G_{\vee}$ if there is any edge between $i$ and $j$ in $G$, while there is an edge between $i$ and $j$ in $G_{\wedge}$ only if there are edges $i$ to $j$ and $j$ to $i$ in $G$.  
It is reasonable to ask when an exchangeable Feller process $\mathbf{\Gamma}$ on directed graphs projects to an exchangeable Feller process on the subspace of undirected graphs.
But these questions are easily answered by consultation with Theorems \ref{thm:discrete char} and \ref{thm:Levy-Ito} and their generalization to directed graphs.
Similar questions might arise for exchangeable processes on similarly structured, but possibly higher-dimensional, spaces.
In these cases, the analogs to Theorems \ref{thm:discrete char}-\ref{thm:induced density process} can be deduced, so we omit them.

\bibliography{crane-refs}
\bibliographystyle{abbrv}

\end{document}